\providecommand{\R}{\mathbb{R}}
\providecommand{\Z}{\mathbb{Z}}
\newtheorem{theorem}{Theorem}
\newtheorem{definition}{Definition}
\newtheorem{proposition}{Proposition}
\newtheorem{lemma}{Lemma}
\newtheorem{remark}{Remark}
\title{On 
%lower 
compactness estimates\\ for hyperbolic systems of conservation laws}
\author{
Fabio Ancona\footnote{
Dipartimento di Matematica,
Universit\`a  di Padova,
Via Trieste 63, 35121 Padova, Italy.
E-mail: \texttt{ancona@math.unipd.it}
} ,
Olivier Glass\footnote{
Ceremade,
Universit\'e Paris-Dauphine, UMR CNRS 7534,
Place du Mar\'echal de Lattre de Tassigny, 
75775 Paris Cedex 16, France.
E-mail: \texttt{glass@ceremade.dauphine.fr}
} ,
Khai T. Nguyen\footnote{
Department of Mathematics,
Penn State University,
235, McAllister buiding, PA 16802, USA.
E-mail: \texttt{ktn2@math.psu.edu}
}
\\
\noalign{\bigskip\medskip}
}
\begin{document}
\maketitle
\begin{abstract}
We study the compactness in $L^{1}_{loc}$ of the semigroup mapping $(S_t)_{t > 0}$ 
defining entropy weak solutions
of general hyperbolic systems of conservation laws in one space dimension. 
We establish a lower estimate
for the Kolmogorov $\varepsilon$-entropy of the image through the mapping $S_t$ of bounded sets in $L^{1}\cap L^\infty$,
which is
of the same order $1/\varepsilon$ as the ones established by the authors
for scalar conservation laws. We also provide an upper estimate of  order $1/\varepsilon$ for the Kolmogorov $\varepsilon$-entropy
of such sets in the case of Temple systems with genuinely nonlinear characteristic families, that extends
the same type of estimate derived by De Lellis and Golse for scalar conservation laws with convex flux.
As suggested
by Lax, these quantitative compactness estimates  could 
provide a measure of the order of  ``resolution'' of the numerical methods implemented for these equations.
\end{abstract}
%
%
%
%
% \begin{center}
% {\small \bf R\'esum\'e} \\
% \medskip
% \parbox[c]{14.2cm}{
% \small \hspace{3ex} Nous \'etudions la compacit\'e dans $L^{1}_{loc}$ du semi-groupe $(S_t)_{t > 0}$ 
% definissant les solutions faibles d'entropie de syst\`emes hyperboliques de lois de conservation g\'en\'eraux en dimension un d'espace. 
% Nous \'etablissons une estim\'ee inf\'ereiure
% de l'$\varepsilon$-entropie de Kolmogorov de l'image par l'application $S_t$ d'ensembles born\'es dans $L^{1}\cap L^\infty$,
% qui est du m\^eme ordre $1/\varepsilon$ que celles establies par les auteurs
% pour les lois de conservation scalaires. 
% Nous obtenons aussi une estim\'ee sup\'erieure d'ordre $1/\varepsilon$ pour l'$\varepsilon$-entropie de Kolmogorov
% de tels ensembles dans le cas des syst\`emes de Temple  avec des champs charact\'eristiques vraiment non lin\'eaires, 
% ce qui \'etend le m\^eme type d'estim\'ees obtenues par De Lellis et Golse dans le cas des lois de conservation scalaires \`a flux convexe.
% Comme sugg\'er\'e par Lax, ces estim\'ees quantitatives pourraient donner une
% mesure de l'ordre de \gf{ r\'esolution } de m\'ethodes num\'eriques mises en place pour ces \'equations.
% } 
% \end{center}
%
%
%
%
%
%
%
%%%%%%%%%%%%%%%%%%%%%%%%%%%%%%%%%%%%%%%%%%%%%%%%%%%%%%
%
%    
\section{Introduction}
Consider a general system of hyperbolic conservation laws in one space dimension
\begin{equation}
\label{conlaws}
u_t+f(u)_x=0,
\qquad\ t\geq 0,\, x\in\R\,,
\end{equation}
where $u=u(t,x)\in \mathbb{R}^N$ represents the conserved quantities and  the flux $f(u)=(f_1(u),\dots,f_N(u))$
%:\Omega\rightarrow\mathbb{R}^n$ 
is a vector valued map of class $C^2$, defined on an open, connected domain $\Omega\subseteq\R^N$
containing the origin.
% containing the origin.
 Assume that the above system is strictly hyperbolic, i.e, that the Jacobian matrix $Df(u)$ has $N$ real, distinct 
 eigenvalues $\lambda_1(u)<...<\lambda_N(u)$ for all $u\in\Omega$. 
%Denote with $r_1(u), \dots, r_N(u)$ a corresponding basis of
%right eigenvectors
%normalized  so that $|r_i(u)|\equiv 1$, for all $i=1,\dots ,N$.
% One can then select bases of left and right eigenvectors
% $l_i(u), r_i(u)$ of $Df(u)$, normalized so that
% %
% \begin{equation}\label{normalized}
%|r_i|\equiv1,\qquad\quad l_i(u)\cdot r_j(u)=\begin{cases}
%    1\quad &\text{if\quad $i=j\,,$}\\
%    0\quad &\text{if\quad $i\neq j\,,$}
%  \end{cases}\,\qquad i,j = 1, ..., n\,.
%\end{equation}   
%%
%%where $\delta_{ij}$ is Kronecker's symbol.
Several laws of physics take the form of a conservation equation. 
A primary example of such systems is provided by the Euler equations
of non-viscous gases (cf.~\cite{Dafermos:Book}).
The fundamental paper of Bianchini and Bressan~\cite{BB}
shows that~\eqref{conlaws} generates a unique (up to the domain) Lipschitz continuous
semigroup $S : [0,\infty[ \times \mathcal{D}_0\to \mathcal{D}_0$
%of solutions $\{S_t\,:\ t\geq 0\}$  that associates to every initial data $u_0$ in a closed domain $\mathcal{D}$
defined on a closed domain $\mathcal{D}_0\subset L^1(\R, \R^N)$,
with the properties:
\begin{enumerate}
  \item[(i)] 
  \begin{equation}
  \label{domainsgr}
  \Big\{v\in L^1(\R, \Omega)\,\big|\, \text{Tot.Var.}(v)\leq \delta_0\Big\}\subset\mathcal{D}_0\subset
  \Big\{v\in L^1(\R, \Omega)\,\big|\, \text{Tot.Var.}(v)\leq 2\delta_0\Big\},
  \end{equation}
  for suitable constant $\delta_0>0$. 
  %$\delta_0, \delta_1>0$.
  \item[(ii)] For every $\overline u\in \mathcal{D}_0$, the semigroup trajectory $t\mapsto S_t \overline u\doteq u(t,\cdot)$
  provides an entropy weak solution of the Cauchy problem for~\eqref{conlaws}, with
  initial data 
\begin{equation}
\label{indata}
u(0,\cdot)=\overline u,
\end{equation}
that satisfy the following admissibility criterion proposed by T.P. Liu in \cite{tplrpnpn},
which generalizes the classical stability conditions introduced by Lax~\cite{lax}.\newline
%
%\begin{definition}
%\label{def:Laxstab}
\textbf{Liu  stability condition.} A shock discontinuity of the $i$-th family $(u^L,\,u^R)$, traveling
with speed $\sigma_i[u^L,\,u^R]$, is {\it Liu admissible} if, for any
state $u$ lying on the $i$-th Hugoniot curve 
%$S_i[u^L]$ 
between $u^L$ and
$u^R$, the shock speed $\sigma_i[u^L,u]$ of the discontinuity
$(u^L,u)$ satisfies
\begin{equation}
  \label{liuc}
  \sigma_i[u^L,\,u]\geq\sigma_i[u^L,\,u^R]\,.
\end{equation}
%
%\end{definition}
%
\end{enumerate}
Thanks to the uniform BV-bound on the elements of $\mathcal{D}_0$, applying
Helly's compactness theorem it follows that $S_t$  is a compact mapping,
for every $t> 0$. Aim of this paper is to provide a quantitative estimate 
of the compactness of such a mapping. Namely, following a suggestion of Lax~\cite{Lax54},
we wish to estimate the Kolmogorov $\varepsilon$-entropy in $L^1$ of the image
of bounded sets in $\mathcal{D}_0$ through the  map $S_t$. We recall that, 
given a metric space $(X,d)$, and  a totally bounded subset $K$ of $X$, 
for every $\varepsilon>0$ we define the Kolmogorov  $\varepsilon$-entropy of $K$
as follows. Let $N_{\varepsilon}(K \ | \ X)$ 
be the minimal number of sets in a cover of $K$ by subsets of~$X$ having diameter no larger than $2\varepsilon$. Then, the 
{\it $\varepsilon$-entropy} of $K$ is defined as
\begin{equation} \nonumber%\label{DefHepsilon}
H_{\varepsilon}(K \ | \ X) \doteq \log_{2} N_{\varepsilon}(K \ | \ X).
\end{equation}
Throughout the paper, we will call an {\it $\varepsilon$-cover}, a cover of $K$ by subsets of $X$ having diameter no larger than $2\varepsilon$. Entropy numbers play a central roles in various areas of information theory and statistics
as well as of learning theory. In the present setting, this concept  could 
provide a measure of the order of ``resolution'' of a numerical method for~\eqref{conlaws}, as suggested in~\cite{Lax78}.
\par

In the case of scalar conservation laws ($N=1$) with strictly convex (or concave) flux, De Lellis and Golse~\cite{DLG}
obtained an upper bound  of order $1/\varepsilon$ on the $\varepsilon$-entropy of $S_t (\mathcal{L})$, for sets $\mathcal{L}\subset L^1(\R)$  of bounded,
compactly supported functions, 
%uniformly bounded in $L^1$ and in $L^\infty$, 
of the form
\begin{equation}
\label{Cclass}
\mathcal{L}_{[I,m,M]}\ \doteq\ \Big\lbrace{\overline u\in L^1(\mathbb{R},\Omega)\,\big|\ \mathrm{Supp}(\overline u)
\subset I,
%\in\mathcal{I}_L,
\ \|\overline u\|_{L^1}\leq m, \|\overline u\|_{L^{\infty}}\leq M \Big\rbrace}\,,
\end{equation}
where $I$ denotes a given interval of $\R$.
%less than or equal to $2L$.
This upper bound turns out to be optimal since  we  provided in~\cite{AOK} a
lower bound of the same order  for the $\varepsilon$-entropy of $S_t (\mathcal{L})$,
for sets $\mathcal{L}$ as in~\eqref{Cclass},
thus showing that such an $\varepsilon$-entropy is of size $\approx(1/\varepsilon)$
for scalar conservation laws with strictly convex (or concave) flux.

These estimates hold 
%in the scalar case 
for  classes of initial data
with possibly unbounded total variation
 because the regularizing effect due to the
convexity (or concavity) of the flux function $f$ yields solutions $u(t,\cdot)$ of~\eqref{conlaws} that belong to $\text{BV}_\text{loc}(\R)$ for any $t>0$. This is no more true in the  case of conservation laws with non convex (or concave) flux and in
the case of systems of conservation laws with no monotonicity assumption on the
eigenvalues of the
Jacobian matrix $Df(u)$. On the other hand, the well-posedness theory
for a general system of conservation laws has been established only for initial data with sufficiently small total variation. 
Therefore, aiming to establish estimates on  the $\varepsilon$-entropy of solutions to general systems of conservation laws~\eqref{conlaws}, it is natural to restrict our analysis to classes of initial data 
with uniformly bounded total variation.
%It is thus natural to address the problem of deriving
%address
%in the present note, we thus 
%provide 
%estimates on  the $\varepsilon$-entropy of solutions to systems of conservation laws~\eqref{conlaws} 
%once we restrict 
% of the form.
%$\mathcal{C}\cap \mathcal{D}$, with~$\mathcal{C}$ as in~\eqref{Cclass}.  
%In the present note we will prove that bounds of the same order hold on the $\varepsilon$-entropy of solutions to 
%general systems of hyperbolic conservation laws~\eqref{conlaws} whose initial data run through 
%sets of the form $\mathcal{C}\cap \mathcal{D}$, with $\mathcal{C}$ as in~\eqref{Cclass}
%and  $\mathcal{D}$ a domain satisfying~\eqref{domainsgr}.
%of functions with uniformly bounded total variation.  
Namely, 
%our main result of the present paper 
we shall provide estimates on the
$\varepsilon$-entropy of $S_t (\mathcal{L}\cap \mathcal{D}_0)$,
%consider sets of initial data of the form
%$\mathcal{C}\cap \mathcal{D}$, 
for sets~$\mathcal{L}$ as in~\eqref{Cclass}, with ~$\mathcal{D}_0$ as in~\eqref{domainsgr}.
Specifically, 
%letting $\mathcal{I}_L$ denotes the class of intervals $I\subset\R$ of length $|I|= 2L$,
we prove the following.
\begin{theorem}
\label{mainthm}
Let $f:\Omega \to \R^N$ be a $C^2$ map on an open, connected domain $\Omega\subset\R^N$ containing 
%a neighbourhood $B_{\overline d}\doteq \{u\in\R^n\, |\, |u|\leq\overline d\,\}$ of 
the origin,
and assume that the system~\eqref{conlaws} is strictly hyperbolic.
%let $\lambda_i(u), u\in\Omega$, $i=1,\dots, N$, be the  eigenvalues of the
%Jacobian matrix $Df(u)$, 
%and denote with $r_1(u), \dots, r_N(u)$ a corresponding basis of right eigenvectors
%normalized  so that $|r_i(u)|\equiv 1$, for all $i=1,\dots ,N$.
%
%
%%denote by $r_i(u), u\in\Omega$, $i=1,\dots, N$,
%a corresponding basis of right eigenvectors
%and set
%%
%\begin{equation}
%\label{j-upperbound}
%\big|D\lambda_i(u)\cdot r_i(u)\big|\leq \alpha_0
%\qquad\forall~u\in\Omega\,,
%\end{equation}
%%
%for some positive constant $\alpha$.
Let $(S_t)_{\geq 0}$ be the semigroup of entropy weak solutions generated by~\eqref{conlaws} 
defined on a domain $\mathcal{D}_0$   satisfying~\eqref{domainsgr}.
Then, given any $L,m,M, T>0$, for any interval $I\subset\R$ of length $|I|= 2L$,
%$I\in \mathcal{I}_L$, 
%$T>0$ 
and for $\varepsilon>0$ sufficiently small,
the following estimates hold.
\begin{enumerate}  
\item[(i)] 
\begin{equation}
\label{entrlowerbound}
H_{\varepsilon}\Big(S_T\big(\mathcal{L}_{[I,m,M]}\cap{\mathcal{D}_0}\big)\ |\ L^1(\mathbb{R},\Omega)\Big)\geq 
%C_{[L,T,\delta_0]} N^2 
%\frac{2\big(N L\cdot \min\{1,T\}\big)^{\!2}}{27\ln(2)\cdot a\cdot T}
\frac{N^2 L^2}{T}\cdot \frac{\big(\min\big\{c_1, c_2 \frac{T}{L}\big\}\big)^{\!2}}{\max\big\{c_3,\, c_4 \frac{N^2 L}{T},\,  c_5 \frac{N L}{\delta_0 T}\big\}}
\cdot\frac{1}{\varepsilon}\,,
\end{equation} 
where 
%$C_{[L,T,\delta_0]}$ denotes some positive constant depending on $L,T,\delta_0$ and on 
$c_3\geq 0$,
$c_l>0$, $l=1, 2, 4, 5$, are  constants given in~\eqref{c1234def}, \eqref{c-def},
which depend only on 
%$\lambda_i, r_i$, and of their derivatives, in a neighbourhood of the origin.
the eigenvalues $\lambda_i(u)$ of the
Jacobian matrix $Df(u)$, on the corresponding right and left eigenvectors $r_i(u), l_i(u)$,
and on their derivatives, in a neighbourhood of the origin.
%%
%\begin{equation}
%\label{alpha}
%a\doteq
%\begin{cases}
%\alpha_1\qquad &\text{if}\quad 
%\frac{3L}{2\delta_0\cdot T}\leq \alpha_1,
%%
%\\
%\noalign{\smallskip}
%\max\big\{1,\frac{3L}{2\delta_0\cdot T} \big\} \qquad &\text{otherwise}.
%\end{cases}
%\end{equation}
%% 
% $a\doteq\max\Big\lbrace{\alpha_0,\frac{3L}{2\delta_0\cdot T}\Big\rbrace}$.
%$a\doteq\max\Big\lbrace{1,\alpha_0,\frac{3L}{2\delta_0\cdot T}\Big\rbrace}$.
%
\medskip
\item[(ii)] 
\begin{equation}
\label{entrupperbound}
H_{\varepsilon}\Big(S_T\big(\mathcal{L}_{[I,m,M]}\cap{\mathcal{D}_0}\big)\ |\ L^1(\mathbb{R},\Omega)\Big)\leq 
48 N\delta_0 \cdot L_T\cdot\dfrac{1}{\varepsilon}\,,
\end{equation}
where 
\begin{equation}
\label{LT}
L_T\doteq L+\frac{\Delta_{\vee}\lambda}{2}\cdot T,
\qquad\quad
\Delta_\vee\lambda\doteq\sup\big\{\lambda_N(u)-\lambda_1(v)\,;\ u,v\in \Omega\big\}.
% \lambda^\text{max} \cdot T.
\end{equation}
\end{enumerate}
\end{theorem}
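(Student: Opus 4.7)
\medskip

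\noindent\emph{Lower bound (part (i)).} I would follow the binary-encoding strategy introduced by the authors for the scalar case in~\cite{AOK}, now exploiting all $N$ characteristic families at once to recover the factor $N^{2}$ in the bound. Fix a reference state $\bar u^{0}$ near the origin, and for each family $i\in\{1,\dots,N\}$ pick two nearby Liu-admissible states on the $i$-th Hugoniot curve through $\bar u^{0}$ at amplitudes $\pm\alpha_{i}$. For integers $n$ and $M=nN$ to be chosen, partition $I$ into $n$ consecutive cells of length $\ell=2L/n$ and, for every string $\sigma\in\{0,1\}^{M}$, build a piecewise constant initial datum $\bar u_\sigma$ that, inside the $j$-th cell, jumps successively along the $N$ families according to the bits $(\sigma_{j,1},\dots,\sigma_{j,N})$ before returning to $\bar u^{0}$. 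By strict hyperbolicity, the spectral gap $\min_{i<j}(\lambda_{j}-\lambda_{i})$ is bounded away from zero near the origin, so by time $T$ the wavelets of different families have separated in space up to a controlled number of interactions that can be estimated via the Glimm interaction potential. A single bit flip displaces one wavelet and changes $S_{T}\bar u_\sigma$ in $L^{1}$ by an amount $\gtrsim\alpha_{i}\ell$. Imposing $\alpha_{i}\ell\simeq 4\varepsilon$, together with the BV budget $Nn\max_{i}\alpha_{i}\leq\delta_{0}$ and the space budget $n\ell\leq 2L$, and optimizing the resulting three constraints, should give $M\simeq 1/\varepsilon$ with a constant having the structure appearing in~\eqref{entrlowerbound}.

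\medskip

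\noindent\emph{Upper bound (part (ii)).} Two a priori estimates reduce the problem to a combinatorial count. The semigroup property together with~\eqref{domainsgr} gives $\mathrm{Tot.Var.}(S_{T}\bar u)\leq 2\delta_{0}$, and finite speed of propagation with $|\lambda_{i}(u)|\leq\Delta_\vee\lambda/2$ (after a suitable translation) yields $\mathrm{Supp}(S_{T}\bar u)\subset[-L_{T},L_{T}]$. Hence it suffices to $\varepsilon$-cover the set of $\R^{N}$-valued $\mathrm{BV}$ functions supported in $[-L_{T},L_{T}]$ with total variation at most $2\delta_{0}$. I would partition $[-L_{T},L_{T}]$ into $K=\lceil 2L_{T}/\eta\rceil$ cells of length $\eta=\varepsilon/(2\delta_{0})$ and replace $u$ by the step function $\tilde u$ taking the value $u(x_{k})$ on the $k$-th cell; the resulting $L^{1}$-error is bounded by $\eta\cdot\mathrm{Tot.Var.}(u)\leq\varepsilon$. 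The quantized step functions are encoded by their sequence of $\R^{N}$-increments $(\tilde u_{k+1}-\tilde u_{k})$, whose $\ell^{1}$ norm is $\leq 2\delta_{0}$. Quantizing the increments at a scale $\delta$ compatible with $\eta$ and applying a stars-and-bars count coordinate by coordinate yields a total count at most $\binom{K+Q}{Q}^{N}$ with $Q\simeq\delta_{0}/\delta$; the elementary bound $\log_{2}\binom{a+b}{b}\leq(a+b)\,H_{2}\!\bigl(b/(a+b)\bigr)$ then produces $H_{\varepsilon}\leq O\bigl(N\delta_{0}L_{T}/\varepsilon\bigr)$, and a careful tuning of $\eta$ and $\delta$ should give the explicit constant $48$ in~\eqref{entrupperbound}.

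\medskip

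\noindent\emph{Main obstacle.} The upper bound is essentially a discretization-plus-counting exercise and is unlikely to cause serious trouble. The genuine difficulty is the lower bound: in the absence of any genuine nonlinearity assumption, there is no rarefaction self-spreading mechanism to be exploited dynamically, so all the distinguishing information has to be packed into the initial profile at spatial scale $\varepsilon$ while respecting the small-BV budget $\delta_{0}$ that places $\bar u_\sigma$ inside $\mathcal{D}_{0}$. Simultaneously, wave-wave interactions among the $N$ families during the time interval $[0,T]$ could contaminate the picture of independent channels, and must be estimated through the Glimm quadratic functional; it is this interplay between the spectral gap, the total variation budget, and the quadratic interaction estimate that produces the three competing terms in the denominator of~\eqref{entrlowerbound} and is the main obstacle to matching the constants.
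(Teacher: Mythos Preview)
Your treatment of the upper bound is fine and matches the paper in spirit: the paper simply observes that $S_T(\mathcal{L}_{[I,m,M]}\cap\mathcal{D}_0)$ sits inside a class of $\R^N$-valued BV functions with total variation $\leq 2\delta_0$ supported in an interval of length $2L_T$, and then quotes the covering-number estimate of~\cite{BKP} (which is proved by exactly the discretize-and-count argument you sketch).

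For the lower bound, however, your approach is genuinely different from the paper's and contains a real gap. You propose to encode bits into \emph{discontinuous} initial data via small shocks along Hugoniot curves, evolve forward to time $T$, and then assert that a single bit flip changes $S_T\bar u_\sigma$ in $L^1$ by an amount $\gtrsim\alpha_i\ell$. But this last claim is precisely the difficulty: the Lipschitz property of the semigroup only gives \emph{upper} bounds on $\|S_T\bar u_\sigma-S_T\bar u_{\sigma'}\|_{L^1}$, never lower bounds, and the Glimm interaction potential controls total variation, not $L^1$ separation. With entropy-dissipating shocks there is no a priori reason why two distinct initial configurations cannot evolve to nearby states at time $T$; indeed, for non-GNL families adjacent shocks can merge or cancel. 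You have not supplied any mechanism that guarantees the forward map is injective (or nearly so) at the scale you need.

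The paper avoids this issue entirely by working with \emph{classical} solutions built from continuous simple-wave profiles $\phi_i^\beta(x)=R_i(\beta_i(x))$ along the rarefaction curves, with $\beta_i$ piecewise-affine sawtooth functions. The key trick is a \emph{backward-in-time controllability} argument: one prescribes the desired state $\psi$ at time $T$ (a superposition of $N$ simple waves with disjoint supports), reverses time via $u(t,x)=\omega(T-t,-x)$, and uses the a priori bounds of Lemma~\ref{i-simple-wave} and Lemma~\ref{smooth-bounds} to show that the backward solution stays classical on $[0,T]$ and lands in $\mathcal{L}_{[I,m,M]}\cap\mathcal{D}_0$. Since classical solutions are reversible, this yields $\psi=S_T\bar u$ exactly, and the $L^1$ distances between different $\psi$'s are then computed directly from the sawtooth construction with no forward-evolution error to control. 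The three competing terms in the denominator of~\eqref{entrlowerbound} arise not from Glimm interaction estimates but from the constraints on the slope $b$ of the sawteeth needed to keep the solution classical (no shock formation), to control interactions during the overlap phase $[T-L/\Delta_\wedge\lambda,T]$, and to fit the initial data into $\mathcal{D}_0$.
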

\begin{remark} 
\label{rem-compar-td}
If the bound $\delta_0$ on the total variation of the initial data in the domain~$\mathcal{D}_0$
satisfies the inequality $\delta_0<\min\big\{
\frac{c_5}{c_3}\!\cdot\! \frac{NL}{T},\, \frac{c_5}{c_4}\!\cdot\!\frac{1}{L}\big\}$
$($interpreting ${1}/{c_3}\doteq \infty$ when $c_3=0$$)$, 
then the lower estimate~\eqref{entrlowerbound}
takes the form
\begin{equation}
\label{entrlowerbound2}
H_{\varepsilon}\Big(S_T\big(\mathcal{L}_{[I,m,M]}\cap{\mathcal{D}_0}\big)\ |\ L^1(\mathbb{R},\Omega)\Big)\geq 
N L \delta_0\cdot\frac{\big(\min\big\{c_1, c_2 \frac{T}{L}\big\}\big)^{\!2}}{c_5} 
\cdot\frac{1}{\varepsilon}\,.
\end{equation} 
Therefore, in this case, upper and lower bounds \eqref{entrupperbound}, \eqref{entrlowerbound2} 
of the $\varepsilon$-entropy turn out to have the same size $N L \delta_0\cdot \frac{1}{\varepsilon}$.
On the other hand, if $c_3>0$, in the case where $T\geq\max \big\{\frac{c_1}{c_2}\!\cdot\! L,\, \frac{c_4}{c_3}\!\cdot\! N^2L,\,
\frac{c_5}{c_3}\!\cdot\! \frac{NL}{\delta_0}\big\}$,
we obtain by~\eqref{entrlowerbound},~\eqref{c1234def},
the estimate
\begin{equation}
\label{entrlowerbound3}
H_{\varepsilon}\Big(S_T\big(\mathcal{L}_{[I,m,M]}\cap{\mathcal{D}_0}\big)\ |\ L^1(\mathbb{R},\Omega)\Big)\geq 
\frac{N^2 L^2}{T}\cdot\frac{c_1^2}{c_3}\cdot\frac{1}{\varepsilon}\,,
\end{equation} 
with $c_3\doteq 2\sup\big\{|\nabla\lambda_i(u)|\,; \ |u|\leq \overline d,\ i=1,\dots,N\,\big\}$
for some $\overline d>0$.
Hence, if $c_3>0$,
%if the domain~$\mathcal{D}_0$ contains initial data with sufficiently large total variation
%and $\alpha_1>0$,
for times $T$ sufficiently large we obtain 
a lower bound on the $\varepsilon$-entropy of $S_T(\mathcal{L}_{[I,m,M]}\cap{\mathcal{D}_0})$
which is of the same order ${L^2}/{(|f''(0)|\, T)}\cdot\frac{1}{\varepsilon}$
established in~\cite{AOK} for solutions to scalar
conservation laws with strictly convex (or concave) flux $f$.
\end{remark}
\begin{remark} 
%Notice that, 
When $N=1$, the semigroup map $S_t$ is defined on the whole space $L^1(\R)$.
Thus, in this case we may analyze the $\varepsilon$-entropy 
of $S_t (\mathcal{L})$
for sets $\mathcal{L}$ of initial data with possibly unbounded total variation
as in~\eqref{Cclass}.
In fact, for scalar conservation laws, with
the same arguments used to establish Theorem~\ref{mainthm}-(i),
if $\overline c\doteq \sup\big\{|f''(u)|\,; \ |u|\leq \overline d\big\}>0$
for some $\overline d>0$,
 one can derive, for $\varepsilon$ sufficiently small, the lower bound
 (cf. Remark~\ref{rem-scalar-control} and Remark~\ref{rem-scalar-combinatory}):
\begin{equation}
\label{entrlowerbound4}
H_{\varepsilon}\Big(S_T\big(\mathcal{L}_{[I,m,M]}\big)\ |\ L^1(\mathbb{R})\Big)
%\geq 
%\frac{L^2}{108\cdot\ln(2)}\cdot\frac{3}{4\alpha_1 \cdot T}\cdot\frac{1}{\varepsilon}
\geq 
\frac{L^2}
{144\cdot\ln(2)\cdot
\overline c \cdot T}\cdot\frac{1}{\varepsilon}.
\end{equation} 
Thus, Theorem~\ref{mainthm} provides in particular an extension of~\cite[Theorem~1.3]{AOK}
to the case of general scalar conservation laws
with smooth, not necessarily convex (or concave) flux.
Clearly, the lower bound~\eqref{entrlowerbound4} is significative only in the case
where $\inf\big\{ |u|\,; \ |f''(u)|>0\big\}=0$, since otherwise one can easily see that the left-hand
side of~\eqref{entrlowerbound4} equals $+\infty$ for small $\varepsilon$.
\end{remark}

The upper bound~\eqref{entrupperbound} stated in Theorem~\ref{mainthm} can be easily obtained relying on the upper estimates
for the covering number of classes of functions with uniformly bounded total variation established in~\cite{BKP}.
In fact, given any element $\varphi\in S_T(\mathcal{L}_{[I,m,M]}\cap{\mathcal{D}_0})$, 
%= (w_1, \dots, w_n)
with $\mathcal{L}_{[I,m,M]}$ as in~\eqref{Cclass}, $|I|=2L$,
%  and $\mathcal{D}_0$ as in~\eqref{domainsgr}, 
by the finite speed of propagation along (generalized) characteristics
(cf.\cite[Chapter~10]{Dafermos:Book}) we have the bound $|\mathrm{Supp}(\varphi)|\leq 2 L_T$ on the support of 
%any component $\varphi_i$ of
$\varphi$,
with $L_T$ as in~\eqref{LT}.
Moreover,  observe that, defining the total variation of a vector valued map $\varphi=(\varphi_1,\dots,\varphi_p):\R\to\R^p$
as $ \text{Tot.Var.}(\varphi)\doteq \sum_i  \text{Tot.Var.}(\varphi_i)$,
and setting
\begin{equation}
\mathcal{M}_{[L,\delta_0, p]}\doteq 
\Big\{\varphi\in\text{BV}\big([0,2L], \R^p\big)\ \big|\ \text{Tot.Var.}(\varphi)\leq 2\delta_0
\Big\},
\end{equation}
one has 
\begin{equation}
\label{entr-bv}
N_{\varepsilon}\Big(\mathcal{M}_{[L,\delta_0, N]}\ \big|\ L^1\big([0,2L], \R^N\big)\Big)\leq 
N_{\varepsilon}\Big(\mathcal{M}_{[NL,\delta_0, 1]}\ \big|\ L^1\big([0,2NL], \R\big)\Big).
\end{equation}
This is due to the fact that, if we let $\varphi\!\!\restriction_{J}$ denote the restriction of a map $\varphi$
to a set $J$, for every $\varepsilon$-cover $\cup_\alpha E^\alpha$ of $\mathcal{M}_{[NL,\delta_0, 1]}$
we can always consider the sets $E^\alpha_1\times\dots\times E^\alpha_N$,
with $E^\alpha_i\doteq\{\varphi(\cdot-(i-1)L)\!\!\restriction_{[(i-1)L, iL]}\,;\varphi\in E^\alpha\}$, 
which provide an $\varepsilon$-cover $\cup_\alpha (E^\alpha_1 \times\dots\times \mathcal{E}^\alpha_N)$
of $\mathcal{M}_{[L,\delta_0, N]}$, with the same cardinality as $\cup_\alpha E^\alpha$.
Thus, given any $L, m, M, T>0$
and  any interval $I\subset\R$ of length $|I|= 2L$,
 applying~\cite[Theorem~1]{BKP},
 and relying on~\eqref{domainsgr}, \eqref{entr-bv}, 
 for $\varepsilon>0$ sufficiently small we find the following upper bound on the minimal covering number
\begin{equation}
\label{entrupperbound2}
\begin{aligned}
N_{\varepsilon}\Big(S_T\big(\mathcal{L}_{[I,m,M]}\cap{\mathcal{D}_0}\big)\ |\ L^1(\mathbb{R})\Big)&\leq 
N_{\varepsilon}\Big(\mathcal{M}_{[L_T,\delta_0, N]}\ \big|\ L^1\big([0,2L_T], \R^N\big)\Big)
\\
%\noalign{\smallskip}
&\leq N_{\varepsilon}\Big(\mathcal{M}_{[N L_T,\delta_0, N]}\ \big|\ L^1\big([0,2 N L_T], \R\big)\Big)
\\
%\noalign{\smallskip}
&\leq
2^{\strut\frac{48 \delta_0 \cdot N L_T}{\varepsilon}}.
\end{aligned}
\end{equation}
One then clearly recovers~\eqref{entrupperbound} from~\eqref{entrupperbound2}.

Therefore, the main novelty of the estimates stated in Theorem~\ref{mainthm} consists in the
lower bound~\eqref{entrlowerbound} that is independent on the total variation
of the functions in $\mathcal{D}_0$, for times $T$ sufficiently large (cf.~Remark~\ref{rem-compar-td}).
%, which cannot be established relying on the available
% estimates for classes of functions with uniformly bounded total variation.
Following the same strategy adopted in~\cite{AOK} we shall prove~\eqref{entrlowerbound} in two steps:
%The main steps of the proof of consist in:
%
\begin{itemize}
\item[1.] For every $i$-th characteristic family, let $s \mapsto R_i(s)$ denote the integral curve
of the $i$-th eigenvector~$r_i$, starting at the origin. Consider a family of profiles of $i$-simple waves $\{\phi_i^\iota\}_{\iota}$
defined as parametrizations $s \mapsto\phi_i^\iota(s)\doteq R_i(\beta_\iota(s))$ of $R_i$ through
a suitable class of 
%continuous, 
piecewise affine, compactly supported functions~$\{\beta_\iota\}_\iota$.
We will show that, at any given time $T$, 
 any superposition $\phi^{{\iota_1}, \dots, {\iota_N}}$ of simple waves $\phi_1^{\iota_1}, \dots, \phi_N^{\iota_N}$,
 %$i$-simple wave $\phi^i_\iota$
can be obtained as the value $u(T,\cdot)=S_T \overline u$ of an entropy admissible weak solution of~\eqref{conlaws},
with initial data $\overline u\in{\mathcal L}_{[L,m,M]}\cap{\mathcal{D}_0}$.

\item[2.] 
%For every $i$-th characteristic family,  w
We shall provide an optimal estimate of the maximum number 
of elements of the family\linebreak $\{\phi^{{\iota_1}, \dots, {\iota_N}}\}_{{\iota_1}, \dots, {\iota_N}}$
%$\{\phi^i_\iota\}_{\iota}$
contained in a subset of $S_T({\mathcal L}_{[I,m,M]}\cap{\mathcal{D}_0})$ of diameter~$2 \varepsilon$.
%of ${\mathcal A}_{[L,m,M,b]}$.  
This estimate is established
with a similar combinatorial argument as the one used in~\cite{BKP},
and immediately yields a lower bound on the $\varepsilon$-entropy of the set $\{\phi^{{\iota_1}, \dots, {\iota_N}}\}_{{\iota_1}, \dots, {\iota_N}}$.
In turn, from the lower bounds on $H_\varepsilon(\{\phi^{{\iota_1}, \dots, {\iota_N}}\}_{{\iota_1}, \dots, {\iota_N}}\ |\ L^1(\R, \Omega))$,
%$i=1,\dots,n$, 
we recover~\eqref{entrlowerbound}.
\end{itemize}
%

%In the second part of the paper we 
Next we focus our attention on a particular
class of hyperbolic systems introduced by Temple~\cite{serre, temple},
under the assumption that all characteristic families are genuinely nonlinear 
or linearly degenerate (see Definition~\ref{def-temple}
in Subsection~\ref{subsec-control}).
Systems of this type arise in traffic flow models, in multicomponent chromatography,
as well as in problems of oil reservoir simulation.
The special geometric features of such systems allow the existence of a 
%Lipschitz 
continuous semigroup of 
solutions 
$S: [0,\infty[ \times \mathcal{D}\to \mathcal{D}$
defined on domains $\mathcal{D}$
 of $L^\infty$-functions with possibly unbounded variation of the form
 \begin{equation}
 \label{domtemple}
 \mathcal{D}\doteq
  \Big\{v\in L^1(\R, \Omega)\ \big|\ W(v(x))\in [a_1, b_1]\times\cdots\times [a_n,b_n] \ \ \text{for all} \ x\in\R\Big\},
 \end{equation}
 where $W(v)=(W_1(v), \dots, W_N(v))$ denotes the Riemann coordinates
 of $v\in\Omega$  (see~\cite{brgoa1}, \cite{bianc}).

 Every trajectory of the semigroup $t\mapsto S_t \overline u\doteq u(t,\cdot)$ yields
 an entropy weak solution of~\eqref{conlaws},\eqref{indata}.
 When all characteristic families are genuinely nonlinear
 such a semigroup is Lipschitz continuous and the map $u(t,x)\doteq S_t \overline u(x)$
% Such a semigroup is Lipschitz 
%continuous when all characteristic families are genuinely nonlinear.
% %~\cite{bionic, brgoa1}.
% Every trajectory of the semigroup $t\mapsto S_t \overline u\doteq u(t,\cdot)$ yields
% an entropy weak solution of~\eqref{conlaws},\eqref{indata},  that 
satisfies the following
% an 
% In this setting, for sake of uniqueness we consider entropy weak solutions that satisfy 
Ole\v{\i}nik-type inequalities on the decay of positive waves (expressed
in Riemann coordinates $w_i(t,\cdot)\doteq W_i(u(t,\cdot))$):
\begin{equation}
\label{Olest}
\frac{w_i(t,y)-w_i(t,x)}{y-x}\leq\frac{1}{c\,t}
\qquad \forall~x<y,\quad t>0,\quad i=1,\dots,N,
\end{equation}
for some constant 
%$c>0$.
\begin{equation}
\label{c-def-temple}
0<c\leq\inf\Big\{|\nabla \lambda_i(u)\cdot r_i(u)|\,;\, u\!\in\! W^{-1}(\Pi), \, i=1,\dots,N\Big\},
\end{equation}
where
\begin{equation*}
\Pi \doteq [a_1, b_1]\times\cdots\times [a_n,b_n].
\end{equation*}
In this setting, it is natural to ask whether we can extend the estimates provided by Theorem~\ref{mainthm}
%We wish to analyze the compactifying effect of such systems
% for 
to classes of initial data with unbounded variation.
The next result provides a positive answer to this question.
Namely, relying on the analysis of the evolution of the Riemann coordinates along
the characteristics and on the Ole\v{\i}nik-type inequalities, we will establish upper and lower estimates on
the $\varepsilon$-entropy of solutions to genuinely nonlinear Temple systems
%than the one provided by Theorem~\ref{mainthm}.
%Such estimates 
which are the natural extension to this class of hyperbolic
systems of the compactness estimates
established
in~\cite{AOK, DLG}
 for scalar conservation laws with strictly convex (or concave) flux.
Specifically, letting $S^w_t \overline w\doteq W(u(t,\cdot))$ denote
the Riemann coordinates expression of the
%entropy weak 
solution of~\eqref{conlaws},\eqref{indata}, with $\overline u\doteq W^{-1}\circ \overline w$, determined by the semigroup map $S$,
%satisfying~\eqref{Olest},
%setting $\Pi \doteq [a_1, b_1]\times\cdots\times [a_n,b_n]$, 
%$\Omega^w\doteq \prod_i [a_i, b_i]$, 
and adopting the norms $\|w\|_{L^1}\doteq \sum_i \|w_i\|_{L^1}$,
$\|w\|_{L^\infty}\doteq \sup_i \|w_i\|_{L^\infty}$
on the space $L^1(\R, \Pi)$,
we prove the following
\begin{theorem}
\label{templethm}
In the same setting of Theorem~\ref{mainthm},
%Let $f:\Omega \to \R^n$ be a $C^2$ map on a domain $\Omega\subset\R^n$ containinig the origin.
assume that ~\eqref{conlaws} is a strictly hyperbolic system of Temple class, 
and that all  characteristic families are genuinely nonlinear or linearly degenerate.
%, i.e. that $\nabla \lambda_i(u)\cdot r_i(u)\neq 0 for all $u\in\Omega$, and for all $i=1,\dots, N$.
Let $(S_t)_{\geq 0}$ be the semigroup of entropy weak solutions  generated by~\eqref{conlaws} 
defined on a domain $\mathcal{D}$  as in~\eqref{domtemple}.
Then, given 
%any $M>0$ such that $\{u\in L^1(\R, \Omega)\, | \, \|u\|_{\!L^\infty}\!\leq M\}\!\subset\! \mathcal{D}$,
any $L,m, M, T>0$, and any interval $I\subset\R$ of length $|I|= 2L$,
%$I\in \mathcal{I}_L$,
%letting $\mathcal{L}^w_{[I,m,M]}$ be the set defined in~\eqref{Cclass},
setting
\begin{equation}
\label{Cclass-w}
\mathcal{L}^w_{[I,m,M]}\ \doteq\ \Big\lbrace{\overline w\in L^1(\mathbb{R},\Pi)\,\big|\ \mathrm{Supp}(\overline w)
\subset I,
%\in\mathcal{I}_L,
\ \|\overline w\|_{L^1}\leq m, \|\overline w\|_{L^{\infty}}\leq M \Big\rbrace}\,,
\end{equation}
for $\varepsilon>0$ sufficiently small,
the following hold.
\begin{enumerate}  
\item[(i)] 

\begin{equation}
\label{entrlowerbound-temple}
H_{\varepsilon}\Big(S^w_T\big(\mathcal{L}^w_{[I,m,M]}\big)\ |\ L^1(\mathbb{R},\Pi)\Big)\geq 
\frac{N^2 L^2}{T}\cdot \frac{1}{\max\big\{c_6,\,  c_7 \frac{N L}{T}\big\}}
\cdot\frac{1}{\varepsilon}.
\end{equation} 
where $c_6, c_7$ are nonnegative constants given in~\eqref{c1234def2}, \eqref{c-def2},
which depend only on the gradient of the eigenvalues $\lambda_i(u)$ of the
Jacobian matrix $Df(u)$ and on the corresponding right eigenvectors~$r_i(u)$, in a neighbourhood of the origin.
\medskip
\item[(ii)] 
If all characteristic families are genuinely nonlinear, one has
\begin{equation}
\label{entrupperbound-temple}
H_{\varepsilon}\Big(S^w_T\big(\mathcal{L}^w_{[I,m,M]}\big)\ |\ L^1(\mathbb{R},\Pi)\Big)\leq 
\frac{32 N^2 L_T^2}{c\,T}\cdot\dfrac{1}{\varepsilon},
\end{equation}
where 
\begin{equation}
\label{lt-def}
L_T\!\doteq L + \sqrt{\frac{8N mT}{c}} \cdot\sup\Big\{
|\nabla \lambda_i(u)\cdot r_j(u)|\,;\,  |W(u)|\leq M, \, i,j=1,\dots,N\Big\},
\end{equation}
and $c$ is the constant appearing in~\eqref{Olest}.
%%
%\begin{equation}
%%\label{LT}
%L_T\doteq L+ \lambda^\text{max} \cdot T.
%\end{equation}
%%
\end{enumerate}
\end{theorem}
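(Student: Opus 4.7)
The proof will exploit the fact that, in Riemann coordinates, Temple systems enjoy a strong decoupling: each coordinate $w_i$ is transported along the $i$-th generalized characteristics, and shock/rarefaction curves of distinct families leave $w_j$ for $j\neq i$ unchanged. This allows a reduction to $N$ scalar-type problems, so that the techniques developed in \cite{AOK,DLG} for scalar conservation laws with convex flux can be transplanted almost verbatim. The lower bound \eqref{entrlowerbound-temple} will follow the two-step scheme already outlined for Theorem~\ref{mainthm}-(i) (simple-wave construction plus combinatorial counting), while the upper bound \eqref{entrupperbound-temple} will combine the Ole\v{\i}nik-type decay~\eqref{Olest} with the BV covering estimate of \cite{BKP}.

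\textbf{Lower bound (i).} For each family $i=1,\ldots,N$, I would construct a large combinatorial family of piecewise affine profiles $\{\beta^i_{\iota_i}\}_{\iota_i}$ supported in $I$, with $\|\beta^i_{\iota_i}\|_{L^1}\leq m/N$ and $\|\beta^i_{\iota_i}\|_{L^\infty}\leq M$, chosen exactly as in the scalar construction of~\cite{AOK} applied to the reduced scalar equation $\partial_t w_i+\lambda_i(0,\ldots,w_i,\ldots,0)\,\partial_x w_i=0$. The vector superposition
\[
\overline{w}^{(\iota_1,\ldots,\iota_N)}\doteq \sum_{i=1}^{N}\beta^i_{\iota_i}\,e_i
\]
then belongs to $\mathcal{L}^w_{[I,m,M]}$, and the Temple non-interaction property guarantees that the time-$T$ value of $w_i$ is essentially determined by $\beta^i_{\iota_i}$ alone, up to translations induced by the other families' characteristics. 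Applying the scalar lower bound of~\cite{AOK} to each family at covering radius $\varepsilon/N$ (the factor $1/N$ accounting for the vector $L^1$-norm $\sum_i\|\cdot\|_{L^1}$) and multiplying the resulting $N$ counts produces $2^{N^2 L^2/(c_6 T\varepsilon)}$ pairwise $2\varepsilon$-distinguishable initial data; rearranging with the rigid-translation threshold encoded in $\max\{c_6,c_7 NL/T\}$ yields~\eqref{entrlowerbound-temple}.

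\textbf{Upper bound (ii).} Fix $\overline{w}\in\mathcal{L}^w_{[I,m,M]}$ and set $w(T,\cdot)\doteq S^w_T\overline{w}$. First I would derive the refined support bound~\eqref{lt-def}: the Ole\v{\i}nik inequality~\eqref{Olest} combined with $\|w_i\|_{L^1}\leq m$ forces the dispersive decay $\|w_i(T,\cdot)\|_{L^\infty}=O\bigl(\sqrt{m/(cT)}\bigr)$, which in turn bounds the effective characteristic speed on the support by $\sqrt{m/(cT)}\cdot\sup|\nabla\lambda_i\cdot r_j|$, so that the support can grow by at most $O\bigl(\sqrt{mT/c}\cdot\sup|\nabla\lambda_i\cdot r_j|\bigr)$ during $[0,T]$. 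Second, \eqref{Olest} says that $x\mapsto w_i(T,x)-x/(cT)$ is non-increasing; since $w_i(T,\cdot)$ vanishes outside an interval of length $2L_T$, a direct monotone-plus-linear decomposition gives
\[
\mathrm{Tot.Var.}\bigl(w_i(T,\cdot)\bigr)\leq \frac{4L_T}{cT}.
\]
Applying \cite[Theorem~1]{BKP} componentwise at covering radius $\varepsilon/N$, and combining the $N$ covers via the product argument~\eqref{entr-bv}, produces an $\varepsilon$-cover of $S^w_T\mathcal{L}^w_{[I,m,M]}$ of cardinality at most $2^{32N^2L_T^2/(cT\varepsilon)}$, which is exactly \eqref{entrupperbound-temple}.

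\textbf{Main obstacle.} The delicate step in part~(ii) is the refined support bound~\eqref{lt-def}: the naive estimate $L+T\,\Delta_\vee\lambda$ would enlarge the right-hand side of~\eqref{entrupperbound-temple} by a factor of order $(T/L)^2$ for large $T$, which would be incompatible with the expected scalar limit. Obtaining the correct $\sqrt{mT/c}$ correction requires using the Ole\v{\i}nik dispersive decay of $\|w_i\|_{L^\infty}$ jointly with the $L^1$-mass bound, in order to control the effective characteristic speed on the region where each $w_i$ is actually nonzero. In part~(i), the delicate point is instead to verify that the Temple non-interaction really preserves the combinatorial distinguishability of the time-$T$ profiles indexed by different multi-indices $(\iota_1,\ldots,\iota_N)$---this depends on the fact that wave crossings between distinct families in Riemann coordinates induce translations, but not dispersion or amplitude changes, on the transported components.
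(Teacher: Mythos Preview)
Your overall plan is sound, and in particular the two-step architecture (controllability/combinatorics for~(i), Ole\v{\i}nik decay plus covering for~(ii)) matches the paper. There are, however, two places where your route diverges from the paper's and where your sketch is either looser or yields a weaker result.

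\medskip
\textbf{Part (i).} The paper does \emph{not} argue forward from combinatorial initial data and then invoke a ``Temple non-interaction property'' to preserve distinguishability at time~$T$. Instead it argues \emph{backward}: it fixes a combinatorial family of target profiles of the form $\phi^\beta(-x)$, $\beta\in\mathcal{PC}^{1,N}_{[L,d,b,T]}$ (each $\beta_i$ supported on the translated interval $[\xi_i^-,\xi_i^+]$, not on the common interval $I$), and shows via a backward classical-solution construction (Proposition~\ref{n-simple-waves-temple} and Proposition~\ref{controllability-temple}) that each such profile is the time-$T$ value $S^w_T\overline w$ of some $\overline w\in\mathcal{L}^w_{[I,m,M]}$. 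The combinatorial counting (Proposition~\ref{Combinatory-temple}) is then performed directly on the target set $\mathcal{A}^w_{[L,M,b,T]}$, where distinguishability is trivial because the profiles are explicit. This sidesteps precisely the delicate point you flag in your ``Main obstacle'' paragraph: you never need to argue that interactions (which, even for Temple systems, deform the characteristic flow in a way that depends on \emph{all} components) preserve the $L^1$-separation of forward trajectories. Your forward approach could presumably be made to work, but it is not what the paper does, and your appeal to ``translations, but not dispersion or amplitude changes'' would require a quantitative argument you have not supplied.

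\medskip
\textbf{Part (ii).} Your derivation of the refined support bound~\eqref{lt-def} is exactly the paper's (it uses Lemma~\ref{LemEstSLLinf2} to get $\|w_i(t,\cdot)\|_{L^\infty}\leq\sqrt{2Nm/(ct)}$ and then integrates the speed perturbation along backward characteristics). However, for the covering step the paper does \emph{not} invoke \cite[Theorem~1]{BKP}. It observes, as you do, that $x\mapsto w_i(T,x)-x/(cT)$ is nonincreasing, then shifts and reflects to land each component in the class $\mathcal{I}_{[2L_T,\,4L_T/(cT)]}$ of nondecreasing functions, and applies the sharper De~Lellis--Golse covering lemma (Lemma~\ref{LemDLG}), which gives $H_\varepsilon\leq 4\cdot 2L_T\cdot\frac{4L_T}{cT}\cdot\frac{1}{\varepsilon}=\frac{32L_T^2}{cT\varepsilon}$ per component. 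Your route through the total-variation bound $\mathrm{Tot.Var.}(w_i(T,\cdot))\leq 4L_T/(cT)$ and \cite{BKP} is correct but, with the constant the paper quotes for \cite{BKP} in~\eqref{entrupperbound2}, would yield $96N^2L_T^2/(cT\varepsilon)$ rather than the stated $32N^2L_T^2/(cT\varepsilon)$. So the claim that your argument ``produces \ldots\ exactly~\eqref{entrupperbound-temple}'' is off by a factor of~$3$; to recover the paper's constant you should apply Lemma~\ref{LemDLG} directly to the monotone transform rather than pass through BKP.
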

\medskip

The paper is organized as follows. In Section~\ref{sec:simple-wave} we first introduce a family of simple waves and then 
construct a class of classical solutions of~\eqref{conlaws} with initial data given by the profiles of $N$ simple waves
supported on disjoint sets. 
This analysis is in particular carried out with a finer accuracy for the special class of Temple systems.
In Section~\ref{sec:lower-est} we establish a controllability result and a combinatorial computation
both for general hyperbolic systems and for Temple systems, which yield the lower bound on the $\varepsilon$-entropy
stated in Theorem~\ref{mainthm} and Theorem~\ref{templethm}.
Finally, Section~\ref{sec:upper-est-temple} contains the derivation of the upper bound on the $\varepsilon$-entropy
for Temple systems stated in Theorem~\ref{templethm}.
\section{Simple waves and classical solutions}
\label{sec:simple-wave}
%\subsection{Preliminaries}
%\subsection{Lifespan of simple waves}
\subsection{Simple waves}
\label{subsec:Simple waves}

Let $f:\Omega \to \R^N$ be a $C^2$ map on an open, connected domain $\Omega$,
and assume that a neighbourhood of the origin $B_{\overline d}\doteq \{u\in\R^n\, |\, |u|\leq\overline d\,\}$
is contained in $\Omega$.
%$\Omega\supset B_{\overline d}\doteq \{u\in\R^n\, |\, |u|\leq\overline d\,\}$.
We shall consider here a class of continuous, piecewise $C^1$ solutions of~\eqref{conlaws} 
that take values on the integral curves of the eigenvectors of the Jacobian matrix $Df$. Such solutions
can be regarded as the nonlinear analogue
of the elementary waves of each characteristic family in which it is
decomposed a solution of a semilinear system (cf.~\cite[Section~7.6]{Dafermos:Book}).
%, \cite[Chapter 4]{H:Book}).
% called {\it simple waves}.
%, which admit an explicit representation along the characterstics. 
For every $i$-th characteristic
family, let $s\mapsto R_i(s)$ denote the integral curve of the eigenvector $r_i$, passing through the origin.
More precisely, we define $R_i(\cdot)$ as the unique solution of the Cauchy problem
\begin{equation}
\label{j-rarefaction}
\frac{d u}{ds}=r_i(u(s)),\quad u(0)=0,
%\quad s\in[-\delta, \delta].
\end{equation}
%
%where $\delta$ is a positive constant such that $R_i(s)\in\Omega$ for all $s\in [-\delta,\delta]$.
that we may assume to be defined on the interval  $]-\overline d, \overline d\,[\,$
of the same size of the neighbourhood $B_{\overline d}\subset \Omega$.
The curve $R_i$ is called the $i$-{\it rarefaction curve} through $0$.
We may select the basis of right eigenvectors $r_i(u), i=1,\dots, N$, together with a basis
of left eigenvectors $l_i, i=1,\dots, N$,  so that 
\begin{equation}
\label{normalized-eigenvect}
|r_i|\equiv 1,\qquad\quad
l_i\cdot r_j \equiv\delta_{i,j}\doteq
\begin{cases}
1\ &\text{if}\quad i=j,
\\
0\ &\text{if}\quad i\neq j,
\end{cases}
\end{equation}
where $u\cdot v$ denotes the inner product of the vectors $u,v\in\R^N$.
%Recalling that the eigenvectors are normalized so that  $|r_i(u)|\equiv 1$, for all $i=1,\dots ,N$,
It follows in particular that 
\begin{equation}
\label{Rparam}
|R_i(s)|\leq |s|\qquad \forall~s\in\,]-\overline d, \overline d\,[\,.
\end{equation}
%
%By possibly reducing the size $d_0$ of the domain of $R_i$,
%we may assume that $d_0=\overline d$.
%Because of the regularity of the flux $f(u)$,
%by possibly reducing the size $d_0$ of the domain of $R_i$, \linebreak we may assume that a uniform strict hyperbolicity condition
%holds in a neighborhood of the origin $B_{d_0}\doteq \{u\in\R^N\, | \ |u|\leq d_0\}$:
%%
%\begin{equation}
%\label{unifhyp}
%\lambda_i(u)<\lambda_j(v)\qquad\forall u,v\in\Omega\cap B_{d_0},\qquad i<j\,,
%\end{equation}
%%
%and that, if the constant $\alpha_1$ in~\eqref{lambdadef} is positive, there holds 
%%
%\begin{equation}
%\label{lambdabound1}
%\sup\Big\{|\nabla\lambda_i(u)|\,; \ u\in\Omega\cap B_{d_0},\ \ i=1,\dots,N\,\Big\}
%< \frac{3\, \alpha_1}{2}\,.
%\end{equation}
%%
%%with $\alpha_0$ as in~\eqref{lambdadef}.
%\medskip
%
For every $b>0$, $0<d< \overline d$,  we define the class of functions 
\begin{equation}
\label{PCset}
\mathcal{PC}^1_{[d,b]}\doteq
\Big\{\beta:\R\to [-d, d]\ \big|\ \beta\ \text{is piecewise} \ C^1\ \text{and}\ |\dot\beta(x)|\leq b
\Big\}.
\end{equation}
Here, we say that a map $\beta:\R\to [-d, d]$ is {\it piecewise} $C^1$ if $\beta$ is continuous on $\R$
and continuously differentiable on all but finitely many points of $\R$, while the bound on $\dot\beta$
in~\eqref{PCset} is assumed to be satisfied at every point of differentiability of $\beta$.
%
%let $\mathcal{PC}^1_{[d,b]}$ denote the class of continuous, piecewise $C^1$ maps
%$\beta:\mathbb{R}\rightarrow [-d, d]$, with at most finitely many points of non differentiability,
%that satisfy 
%%
%\begin{equation}
%\label{bbound}
%|\dot\beta(x)|\leq b
%\end{equation}
%%
Given $\beta\in\mathcal{PC}^1_{[d, b]}$, consider the map
\begin{equation}\label{j-initial wave}
\phi^\beta_i(x)\doteq R_i(\beta(x))
\qquad x\in\R\,,
\end{equation}
and define the corresponding $i$-th characteristic starting at $y\in\R$ as:
\begin{equation}
\label{i-character}
x_i(t,y)\doteq y+\lambda_i(\phi^\beta_i(y))\cdot t,\qquad  t\geq 0.
\end{equation}
Observe that, by\eqref{j-rarefaction}, one has
\begin{equation}
\label{dphi}
%\frac{d}{dx}
\frac{d}{dx} \phi^\beta_i(x)=\beta'(x)\cdot r_i(\phi^\beta_i(x))
\end{equation}
at every point $x$ of differentiability of $\beta$.
Hence, differentiating~\eqref{i-character} w.r.t. $y$ at a point
where $\beta$ is differentiable we find
\begin{equation}
\label{d-i-character}
\frac{\partial}{\partial y}x_i(t,y)=1+[\nabla\lambda_i(\phi^\beta_i(y))\cdot r_i(\phi^\beta_i(y))]\cdot \dot{\beta}(y)\cdot t\,,
\qquad t\geq 0\,.
\end{equation}
Set
\begin{equation}
\label{lambdadef}
\alpha_1\doteq \sup\big\{|\nabla\lambda_i(u)|\,; \ u\in B_{\overline d},\ i=1,\dots,N\,\big\}.
%\cdot r_i(0)|\,; \ i=1,\dots,N\,\}.
\end{equation}
Then, relying on~\eqref{normalized-eigenvect}, \eqref{Rparam}, \eqref{lambdadef}, 
and because of the bound on $\dot\beta$ in~\eqref{PCset},
we derive from~\eqref{d-i-character} the inequality
\begin{equation}
%1-\frac{3}{2}\alpha_0 \, b\cdot t \leq \frac{\partial}{\partial y}x_i(t,y)\leq 1+\frac{3}{2}\alpha_0\, b\cdot t\,,
\frac{\partial}{\partial y}x_i(t,y)\geq 1-\alpha_1 \, b\cdot t\,,
\qquad t\geq 0\,,
\end{equation}
which, in turn, yields
\begin{equation}
\label{d-i-characterbound}
%\frac{1}{2} \leq \frac{\partial}{\partial y}x_i(t,y)\leq \frac{3}{2}
 \frac{\partial}{\partial y}x_i(t,y)\geq \frac{1}{2}
\qquad \forall~ t \in [0, 1/(2\alpha_1\cdot b)].
\end{equation}
%
%for every $0<T\leq \frac{1}{3\cdot\alpha_0\cdot b}$.
The 
%first 
inequality in~\eqref{d-i-characterbound}, in particular,
implies that the map $y\mapsto x_i(t,y)$  is increasing, hence injective. %strictly
Moreover, since $\phi^\beta_i$ is continuous,
from 
%the first inequality in
~\eqref{d-i-characterbound}  
we deduce also that the image of $y\mapsto x_i(t,y)$  is the whole line $\R$.
Therefore, for every fixed $0\leq t\leq1/{(2\alpha_1\cdot b)}$,
 we may define the inverse map of $x_i(t,\cdot)$ on $\R$.
Then, set
\begin{equation}
\label{inverse-y}
z_i(t,\cdot)\doteq\ x_i^{-1}(t,\cdot),
\end{equation}
and define the function
\begin{equation}
\label{solution-u}
u(t,x)\doteq \phi^\beta_i(z_i(t,x)),\quad\forall (t,x)\in [0,T]\times\R\,,
\end{equation}
with $T\leq 1/{(2\alpha_1\cdot b)}$.
The next lemma shows that $u(t,x)$ provides a classical
%piecewise smooth, entropy weak 
solution
of~\eqref{conlaws} on $[0,T]\times\R$, and we shall establish some a-priori estimates on $u(t,\cdot)$.
We will say that the map $u(t,x)$ in~\eqref{solution-u} is an {\it $i$-th simple wave} with profile $\phi^\beta_i$.
We recall that a {\it classical solution} of a Cauchy problem~\eqref{conlaws}, \eqref{indata} 
is a locally Lipschitz continuous map $u: [0,T]\times\R\to \Omega$ that satisfies~\eqref{conlaws}
almost everywhere and~\eqref{indata} for all $x\in\R$. A classical solution of~\eqref{conlaws}, \eqref{indata} 
is in particular an entropy
weak solution of~\eqref{conlaws}, \eqref{indata} (see~\cite[Section~4.1]{Dafermos:Book}).
\begin{lemma}\label{i-simple-wave}
Given $T>0$, $0<d< \overline d$,  $0<b\leq 1/{(2\alpha_1\cdot T)}$, with $\alpha_1$ as in~\eqref{lambdadef}
$($interpreting ${1}/{\alpha_1}\doteq \infty$ when $\alpha_1=0$$)$, 
for any fixed $i=1,\dots,N$, and
for every $\beta\in\mathcal{PC}^1_{[d, b]}$,
the map $u(t,x)$ defined in~\eqref{solution-u} provides a classical solution of
the Cauchy problem
\begin{align}
\label{Cauchy-phi}
&u_t+f(u)_x=0,
\\
\noalign{\smallskip}
\label{Cauchy-phi-in}
&u(0,\cdot)=\phi^\beta_i,
\end{align}
on $[0,T]\times\mathbb{R}$. Moreover, for every $t\leq T$, there hold:
\begin{equation}
\label{est12sw}
\|u(t,\cdot)\|_{L^{\infty}(\R,\Omega)} 
%\leq\|\beta\|_{L^{\infty}(\R,\R)},
=\|\phi^\beta_i\|_{L^{\infty}(\R,\Omega)}\leq d,
\qquad\quad
\|u_x(t,\cdot)\|_{L^\infty(\R,\Omega)}\leq 2\cdot\Big\|\frac{d}{dx}\phi^\beta_i\Big\|_{L^\infty(\R,\Omega)}
\leq 2b.
\end{equation}
%
%and
%%
%\begin{equation}
%\label{est3sw}
%\|u(t,\cdot)\|_{L^1(\R,\Omega)}\leq \frac{3}{2}\cdot\|\phi^\beta_i\|_{L^1(\R,\Omega)},
%\qquad 
%%\label{bounded-tot. var.}
%\mathrm{Tot. var. }(u(t,\cdot))\leq \|\dot\phi^\beta_i\|_{L^1(\R,\Omega)},
%%2\cdot \mathrm{Tot. var. }(\phi^\beta_i),
%\end{equation}
%%
%if $\phi^\beta_i\in W^{1,1}(\R,\Omega)$.
%%L^1(\R,\Omega)\cap \mathrm{BV}(\R,\Omega)$.
\end{lemma}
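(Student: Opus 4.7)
The plan is to solve~\eqref{Cauchy-phi}--\eqref{Cauchy-phi-in} by the method of characteristics, parametrizing the solution by the initial position $y$ through the defining identity $u(t,x_i(t,y))=\phi^\beta_i(y)$. The hypothesis $b\leq 1/(2\alpha_1 T)$ ensures, via~\eqref{d-i-characterbound}, the uniform one-sided lower bound $\partial_y x_i(t,y)\geq 1/2$ at every $t\in[0,T]$ and every point $y$ where $\beta$ is differentiable; combined with the continuity of $x_i(t,\cdot)$ and the fact that $\beta$ has only finitely many breakpoints, this makes $y\mapsto x_i(t,y)$ a continuous, strictly increasing surjection of $\R$ onto itself, so that $z_i(t,\cdot)$ defined in~\eqref{inverse-y} is globally well-defined and Lipschitz with constant at most $2$, uniformly in $t\in[0,T]$. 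Composition with the Lipschitz map $\phi^\beta_i$ yields that $u(t,x)\doteq\phi^\beta_i(z_i(t,x))$ is Lipschitz on $[0,T]\times\R$, and the initial condition~\eqref{Cauchy-phi-in} is immediate from $x_i(0,\cdot)=\mathrm{Id}_\R$.

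Next I would check that $u$ satisfies~\eqref{Cauchy-phi} at every $(t,x)\in[0,T]\times\R$ lying off the finitely many characteristic rays issued from the breakpoints of $\beta$. Differentiating the identity $u(t,x_i(t,y))\equiv\phi^\beta_i(y)$ in $t$ gives $u_t(t,x)+\lambda_i(u(t,x))\,u_x(t,x)=0$ at such points, while its differentiation in $x$, combined with~\eqref{dphi}, yields
\begin{equation*}
u_x(t,x)\,=\,\dot\beta(z_i(t,x))\cdot r_i(u(t,x))\cdot \partial_x z_i(t,x),
\end{equation*}
showing that $u_x$ is parallel to the right eigenvector $r_i(u)$. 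Hence $f(u)_x=Df(u)\,u_x=\lambda_i(u)\,u_x$, and combining the two identities gives $u_t+f(u)_x=0$ almost everywhere on $[0,T]\times\R$, which together with the Lipschitz regularity of $u$ matches precisely the notion of classical solution of~\eqref{conlaws},~\eqref{indata} recalled before the statement.

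The a priori bounds~\eqref{est12sw} then follow directly. Since $z_i(t,\cdot)$ is a bijection of $\R$ onto $\R$, the $L^\infty$-norm of $u(t,\cdot)$ equals that of $\phi^\beta_i$, and the inequality $\|\phi^\beta_i\|_{L^\infty}\leq d$ comes from~\eqref{Rparam} and $|\beta|\leq d$. For the spatial derivative, the inverse function identity $\partial_x z_i(t,x)=1/\partial_y x_i(t,z_i(t,x))\leq 2$ combined with the pointwise bound $|d\phi^\beta_i/dx|=|\dot\beta|\,|r_i(\phi^\beta_i)|\leq b$ coming from~\eqref{dphi} and~\eqref{normalized-eigenvect} yields $\|u_x(t,\cdot)\|_{L^\infty}\leq 2\|d\phi^\beta_i/dx\|_{L^\infty}\leq 2b$. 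The one genuinely delicate point in this scheme is ensuring that the finitely many kinks of $\beta$ do not generate characteristic crossings on $[0,T]$; this is exactly what the uniform lower bound~\eqref{d-i-characterbound} rules out, so the kinks merely give rise to measure-zero rarefaction fans that do not affect the global Lipschitz regularity of $u$ nor the a.e.\ validity of the PDE.
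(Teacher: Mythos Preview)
Your proposal is correct and follows essentially the same approach as the paper: both arguments differentiate the defining identity $u(t,x_i(t,y))=\phi^\beta_i(y)$ in $t$ to obtain $u_t+\lambda_i(u)u_x=0$, show that $u_x$ is parallel to $r_i(u)$ so that $Df(u)u_x=\lambda_i(u)u_x$, and read off the $L^\infty$ bounds from the lower bound~\eqref{d-i-characterbound} on $\partial_y x_i$. The only cosmetic difference is that the paper differentiates the identity in $y$ and then divides by $\partial_y x_i$, whereas you differentiate $u(t,x)=\phi^\beta_i(z_i(t,x))$ directly in $x$ and use $\partial_x z_i\leq 2$; these are equivalent via the inverse function relation. (One minor terminological quibble: the kinks of $\beta$ propagate along single characteristic lines, not ``rarefaction fans''---but this does not affect the argument.)
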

\begin{proof}
Observe first that, by the definitions~\eqref{inverse-y}, \eqref{solution-u}, 
and because of~\eqref{dphi}, \eqref{d-i-characterbound},
the map $u(t,x)$ is Lipschitz continuous, and it is differentiable at every point $(t,x)$ lying outside the curves 
$t\mapsto (t, x_i(t,y_\ell))$, $\{y_\ell\}_\ell$ denoting the finite collection of points where $\beta$
(and hence
$\phi^\beta_i$) is not differentiable.
Moreover, one has
\begin{equation}
\label{sol-u}
u(t,x_i(t,y))=\phi^\beta_i(y),\qquad\forall (t,y)\in [0,T]\times(\R\setminus\{y_\ell\}_\ell),
\end{equation}
and, recalling~\eqref{Rparam}, the first estimate in~\eqref{est12sw} holds.
%%
%\begin{equation}
%\|u(t,\cdot)\|_{L^{\infty}(\R,\Omega)}=\|\phi^\beta_i(\cdot)\|_{L^{\infty}(\R,\Omega)}
%%\leq 
%%\|\beta\|_{L^{\infty}(\R,\R)}
%\qquad\forall~t\in[0,T].
%\end{equation}
%%
Taking the derivative with respect to $t$ and $y$ in both sides of (\ref{sol-u}), 
and recalling~\eqref{i-character}, \eqref{dphi}, we obtain 
\begin{equation}\label{der-u-t}
u_t(t,x_i(t,y))+\lambda_i(u(t,x_i(t,y)))\cdot u_x(t,x_i(t,y))=0,
\end{equation}
and
\begin{equation}
\label{der-u-x}
u_x(t,x_i(t,y))\cdot \frac{\partial}{\partial y}x_i(t,y)
%=\dot{\beta}(y)\cdot r_i(\phi^\beta_i(y))
={\beta'}(y)\cdot r_i(u(t,x_i(t,y))),
\end{equation}
at every point $(t,y)\in [0,T]\times(\R\setminus\{y_\ell\}_\ell)$.
We may divide both sides of~\eqref{der-u-x} by $\frac{\partial}{\partial y}x_i(t,y)$
because of~\eqref{d-i-characterbound}, and thus find
\begin{equation}
\label{der-u-x2}
Df(u(t,x_i(t,y)))\cdot u_x(t,x_i(t,y)) = 
\lambda_i(u(t,x_i(t,y)))\cdot u_x(t,x_i(t,y))\,,
\end{equation}
which, together with~\eqref{der-u-t}, yields
\begin{equation}
\label{quasi-eqn}
\nonumber
u_t(t,x)+Df(u(t,x))\cdot u_x(t,x)=0,
%\quad\forall (t,x)=[0,T]\times\R.
\end{equation}
at every point $(t,x)\in ([0,T]\times\R)\setminus \cup_\ell\{(t,x_i(t,y_\ell)\,|\, t\in [0,T]\}$.
On the other hand, since by~\eqref{i-character} $x(0,\cdot)$ is the identity map, 
it follows from~\eqref{inverse-y}, \eqref{solution-u} that  
$u(0,x)=\phi^\beta_i(x)$ for all $x\in\R$.
Therefore, $u(t,x)$ is a Lipschitz continuous map that satisfies the equation~\eqref{Cauchy-phi}
%outside finitely many curves in the strip 
almost everywhere on $[0,T]\times\R$, together with the initial condition~\eqref{Cauchy-phi-in}
at every $x\in\R$. Hence  $u(t,x)$ provides
a classical solution of~\eqref{Cauchy-phi}-\eqref{Cauchy-phi-in}.
Moreover,  relying on~\eqref{normalized-eigenvect},  \eqref{dphi}, 
\eqref{d-i-characterbound}, \eqref{sol-u}, 
and because of the bound on $\dot\beta$ in~\eqref{PCset}, we recover
from~\eqref{der-u-x} the second estimates in~\eqref{est12sw}, thus completing the proof of the lemma.
%
%To complete the proof of the lemma, observe that by~\eqref{d-i-characterbound}, \eqref{sol-u},
%%and because~\eqref{d-i-characterbound}-\eqref{inverse-y} imply $\frac{\partial}{\partial x}z(t,x)\leq 2$, 
%one has
%%
%\begin{equation}
%\label{l1est}
%\begin{aligned}
%\|u(t,\cdot)\|_{L^1}&=\int_{\R}|u(t,x_i(t,y))|\cdot\frac{\partial}{\partial y}x_i(t,y) dy
%%
%\\
%&\leq \frac{3}{2}\cdot \int_{\R}|u(t,x_i(t,y))| dy
%%
%\\
%\noalign{\smallskip}
%&=\frac{3}{2}\cdot \|\phi^\beta_i\|_{L^1},
%\end{aligned}
%\end{equation}
%%
%thus proving the first inequality in~\eqref{est3sw}.
%On the other hand, 
%since $u(t,\cdot)$ is a piecewise $C^1$ function, 
%relying on~\eqref{dphi}, \eqref{d-i-characterbound}, \eqref{sol-u}, \eqref{der-u-x}, we derive
%the estimate
%%
%\begin{equation}
%\label{tot-var-est}
%\begin{aligned}
%\mathrm{Tot. var. }(u(t,\cdot))&\leq \int_\R \big|u_x(t,x)\big|dx
%%
%\\
%&=\int_\R \big|u_x(t,x_i(t,y))\big|\cdot \frac{\partial}{\partial y}x_i(t,y) dy
%%
%\\
%&=\int_\R\big|\dot\phi^\beta_i(y)\big| dy,
%\end{aligned}
%\end{equation}
%%
%which proves the second inequality in~\eqref{est3sw}.
\end{proof}

\subsection{Superposition of simple waves}

%Relying on the uniform strict hyperbolicity condition~\eqref{unifhyp}, w
%We will show now how to 
We wish to construct now a classical solution of~\eqref{conlaws}, 
on a fixed time interval~$[0,T]$, 
with initial data given by the
profiles of $N$ simple waves, 
one for each characteristic family, supported on disjoint sets.
%obtained by piecing together $N$ simple waves, 
%one for each characteristic family, that are supported on disjoint sets in some initial time interval.
%Toward this goal, we first recall 
%To establish this result we shall rely on
In order to analyze the behaviour of the solution in the regions of interaction among simple waves
we shall rely on uniform a-priori bounds on a classical solution  $u(t,x)$ of~\eqref{conlaws}
and on its spatial derivative,  which can be derived by a standard technique
(e.g. see~\cite[Section 4.2]{H:Book})
%in any given time interval~$[0,T]$,
% studying the evolution of the components of $u$ and $u_x$ along the characteristics 
when the initial data has sufficiently small norms $\|u(0,\cdot)\|_{L^{\infty}(\R,\Omega)}$,
$\|u_x(0,\cdot)\|_{L^{\infty}(\R,\Omega)}$. 
In order to state the next lemma that provides such a-priori estimates
we need to introduce some further notation.
Letting $l^T$ denote the transpose (row) vector of a given (column) vector $l\in\R^N$, set
\begin{gather}
\label{alpha-1-2-def1}
\begin{gathered}
\Gamma_2(u)\doteq
%\sum_i
\sup_{i,j,k}\bigg\{\big|\lambda_k(u)-\lambda_i(u)\big|\big|l_i^T(u)Dr_j(u)\big|\bigg\},
\\
\noalign{\smallskip}
\Gamma_3(u)\doteq
%\sum_i
\sup_{i,j,k} \bigg\{\big|\lambda_k(u)-\lambda_j(u)\big|\big|l_i^T(u)Dr_k(u)\big|\bigg\}
%+\sum_i
+\sup_i\big|\nabla\lambda_i(u)\big|,
\\
\noalign{\smallskip}
\Gamma_4(u)\doteq 
%\sum_i
\sup_i\big|l_i(u)\big|,
\end{gathered}
\\
\noalign{\smallskip}
\label{alpha-1-2-def2}
\alpha_l\doteq \sup\Big\{\Gamma_l(u)\,; \, u\in B_{\overline d}\,\Big\} ,\qquad l=2,3,4.
%%
%\\
%\noalign{\smallskip}
%\alpha_2\doteq
%N\cdot  \sup_i\bigg\{\sum_{j} \big|\lambda_i(0)-\lambda_j(0)\big|\big|l_i^T(0)Dr_j(0)\big|\bigg\},
%%
%\\
%\noalign{\smallskip}
%\alpha_3\doteq
%\alpha_0+\alpha_2+\sup_i\bigg\{\sum_{j,k} \big|\lambda_i(0)-\lambda_j(0)\big|\big|l_i^T(0)Dr_k(0)\big|\bigg\},
 \end{gather}
Notice that %, by the strict hyperbolicity of the system, one has $\alpha_2, \alpha_3>0$, while~
\eqref{normalized-eigenvect} implies $\alpha_4\geq 1$.
%Thus,  
% By the  regularity of the flux $f:\Omega \to \R^N$,  
% we may reduce the size of neighbourhood of the
% origin $B_{\overline d}\doteq \{u\in\R^n\, |\, |u|\leq\overline d\,\}\subset \Omega$ considered in~\eqref{lambdadef},
% %$\Omega\supset B_{\overline d}\doteq \{u\in\R^n\, |\, |u|\leq\overline d\,\}$,
% so that there holds
% %
% \begin{equation}
% \label{alpha-1-2-bound1} 
% \sup\Big\{\Gamma_l(u)\,; \, u\in B_{\overline d}\,\Big\}
% \leq \frac{3\,\alpha_l}{2}\,,
% \qquad l=2,3,4 \,.
% \end{equation}
%
Comparing~\eqref{lambdadef}, \eqref{alpha-1-2-def1}, \eqref{alpha-1-2-def2}, we deduce that
\begin{equation}
\label{a134}
\alpha_1 \leq  \alpha_3 \leq \alpha_3\alpha_4.
\end{equation}
\begin{lemma}
\label{smooth-bounds}
%Let $f:\Omega \to \R^N$ be a $C^2$ map on an open, connected domain $\Omega\subset\R^n$ containinig the origin,
%and assume that the system~\eqref{conlaws} is strictly hyperbolic.
Given $T>0$, $0<d\leq (\overline d\,e^{-\alpha_2/\alpha_3}) /(2\alpha_4 N)$,  $0<b\leq  1/{(2 \alpha_3\alpha_4 N^2\cdot T)}$,
 with $\alpha_l$, $l=1,2,3$, as in~\eqref{lambdadef} and~\eqref{alpha-1-2-def2}, consider a
 piecewise $C^1$ map $\phi:\R\to\Omega$ 
 %with at most finitely many points of non differentiability,
 that satisfies
 \begin{equation}
 \label{bound-indata}
 \|\phi\|_{L^{\infty}(\R,\Omega)}\leq d,
 \qquad\qquad
\|\phi'\|_{L^{\infty}(\R,\Omega)}\leq b.
 \end{equation}
 Then, the Cauchy problem
\begin{align}
\label{Cauchy-phi2}
&u_t+f(u)_x=0,
\\
\noalign{\smallskip}
\label{Cauchy-phi-in2}
&u(0,\cdot)=\phi,
\end{align}
 admits a classical solution $u(t,x)$ on $[0,T]\times \R$ and, for every $t\leq T$, there hold
 \begin{equation}
 \label{est1-smooth}
 \|u(t,\cdot)\|_{L^{\infty}(\R,\Omega)}\leq 2\alpha_4 N\, e^{\frac{\alpha_2}{\alpha_3}} \cdot d,
 \qquad\qquad
 \|u_x(t,\cdot)\|_{L^{\infty}(\R,\Omega)}\leq 2\alpha_4 N\cdot b.
 \end{equation}
\end{lemma}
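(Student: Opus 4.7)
The plan is to establish both a-priori bounds by the standard Riemann-invariant-type decomposition of $u_x$, combined with a Gronwall/continuation argument that exploits the smallness of $d$, $b$, and $T$. First I would invoke the classical local existence and continuation theory for strictly hyperbolic quasilinear systems with $C^1$ data (see \cite[Section~4.2]{H:Book}) to obtain a maximal time $T^\ast>0$ and a classical solution $u\in C^1([0,T^\ast)\times\R,\Omega)$ of~\eqref{Cauchy-phi2}--\eqref{Cauchy-phi-in2}, with the blow-up alternative: either $T^\ast\geq T$, or $u(t,\cdot)$ escapes every compact subset of $\Omega$ as $t\nearrow T^\ast$, or $\|u_x(t,\cdot)\|_{L^\infty}\to\infty$ as $t\nearrow T^\ast$. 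It therefore suffices to show, by a bootstrap argument, that the quantitative bounds~\eqref{est1-smooth} remain valid on $[0,T^\ast)\cap[0,T]$, which then forces $T^\ast\geq T$.

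Next I would decompose $u_x(t,x)=\sum_i w_i(t,x)\,r_i(u(t,x))$ with $w_i\doteq l_i(u)\cdot u_x$, and derive for each $i$ the standard transport equation
\begin{equation*}
\partial_t w_i + \lambda_i(u)\,\partial_x w_i \ =\ \sum_{j,k} G^i_{jk}(u)\, w_j\, w_k,
\end{equation*}
where the coefficients $G^i_{jk}(u)$ take the form $c_{ijk}\,(\lambda_k(u)-\lambda_j(u))\,l_i^T(u)Dr_k(u)\cdot r_j(u)$ plus $\nabla\lambda_i\cdot r_j$ contributions, and hence are controlled by $\alpha_3$ in view of~\eqref{alpha-1-2-def1}--\eqref{alpha-1-2-def2}. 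Simultaneously, differentiating $u$ along the $i$-th characteristic $\dot x_i=\lambda_i(u)$ yields
\begin{equation*}
\tfrac{d}{dt}\,u(t,x_i(t)) \ =\ \sum_{j\neq i}\bigl(\lambda_i(u)-\lambda_j(u)\bigr)\,w_j\,r_j(u),
\end{equation*}
so that the rate of change of $u$ along characteristics is bounded by $\alpha_2\cdot\sum_j|w_j|$.

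Setting $W(t)\doteq\sup_{i,x}|w_i(t,x)|$, the initial bound $\|\phi'\|_\infty\leq b$ together with $|w_i|\leq\alpha_4|u_x|$ gives $W(0)\leq\alpha_4 b$. Integrating the transport equations along the $i$-th characteristics yields a Riccati-type inequality of the form $\dot W\leq C\,\alpha_3 N^2 W^2$, and the smallness $b\leq 1/(2\alpha_3\alpha_4 N^2 T)$ closes the bootstrap, producing $W(t)\leq 2\alpha_4 b$ for every $t\in[0,T\wedge T^\ast)$. Since $|u_x|\leq NW$, this is precisely the second inequality in~\eqref{est1-smooth}. For the $L^\infty$-bound on $u$, integrating the characteristic equation for $u$ along each $x_i(\cdot)$ and applying Gronwall with growth rate at most $2\alpha_2\alpha_4 Nb$ over $[0,T]$ — combined with $2\alpha_2\alpha_4 NbT\leq \alpha_2/\alpha_3$ — gives $\|u(t,\cdot)\|_\infty\leq \alpha_4 N\,e^{\alpha_2/\alpha_3}\,\|\phi\|_\infty\leq 2\alpha_4 N\,e^{\alpha_2/\alpha_3}\cdot d$. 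The hypothesis on $d$ then forces $u(t,\cdot)$ to remain inside $B_{\overline d}\subset\Omega$, ruling out the remaining two branches of the blow-up alternative and hence yielding $T^\ast\geq T$.

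The main obstacle is the careful bookkeeping of the constants $\alpha_2$ versus $\alpha_3$ in the source terms of the transport equations, so that the diagonal coefficients involving $\nabla\lambda_i\cdot r_i$ (which are responsible for finite-time gradient blow-up in genuinely nonlinear families) are absorbed into the $\alpha_3 W^2$ Riccati term, while the off-diagonal cross-terms that govern the $L^\infty$-growth of $u$ along characteristics are measured by the smaller quantity $\alpha_2$. Once this algebraic decomposition is in place, the bootstrap closure and the continuation to $[0,T]$ are routine.
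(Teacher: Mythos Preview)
Your strategy for the gradient bound is essentially the paper's: decompose $u_x$ along the eigenbasis, derive Riccati transport equations for the components, and close via the smallness of $b$. (The paper uses $Q(t)=\sum_i\|q_i(t,\cdot)\|_{L^\infty}$ rather than your $W(t)=\sup_i\|w_i\|_{L^\infty}$, but the arithmetic matches.)

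However, your treatment of the $L^\infty$-bound on $u$ has a gap. The characteristic identity
\[
\tfrac{d}{dt}\,u(t,x_i(t))=\sum_{j\neq i}(\lambda_i(u)-\lambda_j(u))\,w_j\,r_j(u)
\]
has right-hand side bounded by $(\text{const})\cdot\sum_j|w_j|$, which is \emph{not} linear in $|u|$; so Gronwall does not apply, and you only obtain an additive estimate $|u(t)|\leq d+Ct$ rather than the multiplicative bound $\|u\|_\infty\leq(\text{const})\cdot d$ required in~\eqref{est1-smooth}. Moreover, the constant $\alpha_2$ is not the right bound for that right-hand side: by~\eqref{alpha-1-2-def1} $\alpha_2$ involves the factors $|l_i^T Dr_j|$, which do not appear in your equation.

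The paper closes this by decomposing $u$ itself, setting $p_i\doteq l_i(u)\cdot u$, and deriving the companion transport equation
\[
(p_i)_t+\lambda_i(u)(p_i)_x=\sum_{j,k}\gamma^p_{i,j,k}(u)\,p_j\,q_k,\qquad |\gamma^p_{i,j,k}|\leq\alpha_2,
\]
which \emph{is} linear in the $p$-components. Gronwall on $P(t)=\sum_i\|p_i(t,\cdot)\|_{L^\infty}$ gives $P(t)\leq P(0)\exp\bigl(\alpha_2 N\!\int_0^t Q\bigr)$; then $\int_0^t Q\leq 2Q(0)t\leq 1/(\alpha_3 N)$ and $\|u\|_\infty\leq P\leq\alpha_4 N\|u\|_\infty$ yield $\|u(t,\cdot)\|_\infty\leq \alpha_4 N\,e^{\alpha_2/\alpha_3}\,d$. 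This is also where the prefactor $\alpha_4 N$ in the first estimate of~\eqref{est1-smooth} comes from---it does not arise from your direct characteristic integration of $u$.
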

\begin{proof}
We provide here only  a sketch of the proof. Further details can be found in~\cite[Section 4.2]{H:Book}.
In order to prove the lemma
it will be sufficient to show that, for every fixed time $T \leq 1/(2\alpha_3\alpha_4 N^2 \cdot b)$, 
and for every initial data $\phi$ satisfying~\eqref{bound-indata}, the estimates~\eqref{est1-smooth} 
hold on $[0, T]$ for a classical solution of~\eqref{Cauchy-phi2}-\eqref{Cauchy-phi-in2}. 
In fact, since 
 by~\eqref{bound-indata} we are assuming the initial  bound
\begin{equation}
\label{phi-bound}
\|\phi\|_{L^{\infty}(\R,\Omega)}\leq d\leq  \frac{\overline d}{2 \,\alpha_4 N}e^{-\frac{\alpha_2}{\alpha_3}} ,
\end{equation}
the first estimate in~\eqref{est1-smooth} 
%which, together with~\eqref{est-P3}, 
guarantees in particular that $\|u(t,\cdot)\|_{L^{\infty}(\R,\Omega)}\leq \overline d$
for all $t\in[0, T]$.
As in the proof of~\cite[Theorem 4.2.5]{H:Book},
relying on the a-priori bounds ~\eqref{est1-smooth} one can then actually construct
a classical solution of~\eqref{Cauchy-phi2}-\eqref{Cauchy-phi-in2} on $[0,T]$,
as limit of a Cauchy sequence of approximate solutions of
the linearized problem. 

Thus, assume that $u(t,x)$ is a classical solution of the Cauchy 
problem~\eqref{Cauchy-phi2}-\eqref{Cauchy-phi-in2} on $[0,T]\times \R$,
with
a piecewise $C^1$ initial data $\phi$ satisfying~\eqref{bound-indata}.
We may decompose $u$ and $u_x$ along the basis of right eigenvectors
$r_1(u), \dots, r_N(u)$, writing
\begin{equation}
\label{sol-decomp-1}
u(t,x)=\sum_i p_i(t,x) r_i(u(t,x)),
\qquad\quad
u_x(t,x)=\sum_i q_i(t,x) r_i(u(t,x)),
\end{equation}
which, because of~\eqref{normalized-eigenvect}, is equivalent to set
\begin{equation}
\label{sol-decomp-2}
p_i(t,x)\doteq l_i(u(t,x)) \cdot u(t,x),
\qquad\quad
q_i(t,x)\doteq l_i(u(t,x)) \cdot u_x(t,x),
\qquad i=1,\dots,N.
\end{equation}
Differentiating $p_i, q_i$ along the $i$-th characteristic we find,
for each $i$-th characteristic family, the equations
\begin{equation}
\label{eq-pi-qi}
\begin{aligned}
(p_i)_t+\lambda_i(u(t,x))(p_i)_x
&=\sum_{j,k} \gamma^p_{i,j,k}(u(t,x))\, p_j q_k,
\\
\noalign{\smallskip}
(q_i)_t+\lambda_i(u(t,x))(q_i)_x
&=\sum_{j,k} \gamma^q_{i,j,k}(u(t,x))\, q_j q_k,
\end{aligned}
\end{equation}
where
\begin{equation}
\label{eq-pi-qi-coeff}
\begin{aligned}
\gamma^p_{i,j,k}(u)&\doteq
\big(\lambda_k(u)-\lambda_i(u)\big) l_i^T(u)Dr_j(u) r_k(u),
\\
\noalign{\smallskip}
\gamma^q_{i,j,k}(u)&\doteq
\frac{1}{2}
\big(\lambda_k(u)-\lambda_j(u)\big)l_i^T(u)\big[r_j(u), r_k(u)\big]-
\delta_{i,k} \nabla\lambda_i(u)\cdot r_j(u)
\end{aligned}
\end{equation}
($\delta_{i,k}$ being the Kronecker symbol in~\eqref{normalized-eigenvect} and $[r_j,r_k]$
denoting the Lie bracket of the vector fields $r_j, r_k$).
Observe that, by definitions~\eqref{alpha-1-2-def1}-\eqref{alpha-1-2-def2}, one has
\begin{equation}
\label{eq-pi-qi-coeff-est}
%\sum_i
\max_{i,j,k} \big|\gamma^p_{i,j,k}(u)\big|\leq \alpha_2,\qquad\quad
%\sum_i
\max_{i,j,k} \big|\gamma^q_{i,j,k}(u)\big|\leq \alpha_3 \qquad
\forall~u\in B_{\overline d}.
\end{equation}
Then, assuming that $\|u(t,\cdot)\|_{L^{\infty}(\R,\Omega)}\leq \overline d$ for all $t\in [0,T]$, it follows from
the second equation in~\eqref{eq-pi-qi}
integrated along the characteristics
that, setting
\begin{equation}
\label{Qdef}
Q(t)\doteq \sum_i \big\|q_i(t,\cdot)\big\|_{L^{\infty}(\R,\Omega)},
\end{equation}
there holds
\begin{equation}
\label{est-Q}
Q(t)\leq Q(0) + \alpha_3 N \int_0^t (Q(s))^2ds
\qquad\forall~t.
\end{equation}
By a comparison argument one then derives from~\eqref{est-Q}
that  
\begin{equation}
\label{est-Q-2}
Q(t)\leq \frac{Q(0)}{1 - \alpha_3 N t\,Q(0)}
\qquad\quad\forall t \in \left[0, \frac{1}{\,\alpha_3 N\,Q(0)} \right[.
\end{equation}
On the other hand, notice that by~\eqref{normalized-eigenvect}, \eqref{alpha-1-2-def2}, \eqref{sol-decomp-1}, \eqref{sol-decomp-2},
and recalling~\eqref{normalized-eigenvect}, 
one has
\begin{equation}
\label{uxQ}
 \|u_x(t,\cdot)\|_{L^{\infty}(\R,\Omega)} \leq Q(t) \leq \alpha_4 N \|u_x(t,\cdot)\|_{L^{\infty}(\R,\Omega)}.
\end{equation}
Since we assume by~\eqref{bound-indata} the initial bound
\begin{equation}
\label{phix-bound}
\|\phi'\|_{L^{\infty}(\R,\Omega)}\leq b\leq  \frac{1}{2 \,\alpha_3\,\alpha_4 N^2\cdot T},
\end{equation}
which, in turn, because of~\eqref{uxQ} implies 
\begin{equation*}
Q(0)\leq  \frac{1}{2\,\alpha_3 N\,\cdot T},
\end{equation*}
we obtain
\begin{equation*}
Q(t) \leq 2 Q(0) \qquad\quad \forall t \in [0,T].
\end{equation*}
We deduce with~\eqref{est-Q-2}, \eqref{uxQ}, that
\begin{equation}
\label{est-Q-3}
 \|u_x(t,\cdot)\|_{L^{\infty}(\R,\Omega)}\leq Q(t)\leq 2 Q(0) \leq 2\alpha_4 N\cdot\|\phi'\|_{L^{\infty}(\R,\Omega)}
 \qquad\forall~t\leq T,
\end{equation}
proving the second inequality in~\eqref{est1-smooth}. 
Next, setting
\begin{equation}
\label{Pdef}
P(t)\doteq \sum_i \big\|p_i(t,\cdot)\big\|_{L^{\infty}(\R,\Omega)},
\end{equation}
and integrating the first equation in~\eqref{eq-pi-qi} along the characteristic, we derive
\begin{equation}
\label{est-P}
P(t)\leq P(0) + \alpha_2 N \int_0^t P(s)Q(s) ds
\qquad\forall~t.
\end{equation}
Then, applying Gronwall's lemma, we deduce from~\eqref{est-P}
that
\begin{equation}
\label{est-P2}
P(t)\leq P(0) \exp\bigg( \alpha_2 N \int_0^t  Q(s) ds\bigg)
\qquad\forall~t.
\end{equation}
On the other hand observe that since~\eqref{phix-bound} implies $Q(0)\leq  \frac{1}{2\,\alpha_3 N\,\cdot t}$
for all $t\leq T$,
we deduce from~\eqref{est-Q-2} that 
\begin{equation}
\label{Qest-3}
\int_0^t Q(s) ds  \leq  2 Q(0)\,t  \leq  \frac{1}{\alpha_3 N}
\qquad\forall~t\leq T.
\end{equation}
Moreover, by~\eqref{normalized-eigenvect}, \eqref{alpha-1-2-def2}, \eqref{sol-decomp-1}, \eqref{sol-decomp-2}
there holds
\begin{equation}
\label{uP}
\|u(t,\cdot)\|_{L^{\infty}(\R,\Omega)}  \leq P(t)  \leq  \alpha_4 N \|u(t,\cdot)\|_{L^{\infty}(\R,\Omega)}.
\end{equation}
Hence, \eqref{est-P2}-\eqref{uP} together yield
\begin{equation}
\label{est-P3}
\|u(t,\cdot)\|_{L^{\infty}(\R,\Omega)}  \leq P(t)
\leq P(0) e^{\frac{\alpha_2}{\alpha_3}}
\leq \alpha_4 N \, e^{\frac{\alpha_2}{\alpha_3}}\|\phi\|_{L^{\infty}(\R,\Omega)}
\qquad\forall~t\leq T.
\end{equation}
 This completes the proof of the first inequality in~\eqref{est1-smooth},
 and hence of the lemma.
\end{proof}
Relying on Lemma~\ref{i-simple-wave} and Lemma~\ref{smooth-bounds}
we shall construct now a classical solution $u(t,x)$ of~\eqref{conlaws}
on a given time interval $[0,T]$,
so that:
\vspace{-2pt}
\begin{itemize}
\item[-] the initial data $u(0,\cdot)$ is supported on $N$ disjoint intervals $I_i$, $i=1,\dots,N$, of the same length $|I_i|=L$,
and on each interval $I_i$ it coincides with the profile of a simple wave of the $i$-th characteristic family;
\item[-] the terminal value $u(T,\cdot)$ at time $T$ is supported on an interval of length $\approx 2L$.
\end{itemize}
Namely, given $L, b>0$, $0<d< \overline d$ and
%$T\geq L/(\displaystyle{\min_i}(\lambda_{i+1}(0)-\lambda_i(0)))$, and
%$0<\tau\leq 1/{(3\alpha_0\cdot b)}$, 
%$0<d< \min\{d_0, L/(3N\alpha_0\tau)\}$,  
%
\begin{equation}
\label{T-assumption1}
T\geq \frac{L}{\Delta_{\wedge}\lambda},\qquad
\Delta_{\wedge}\lambda\doteq \displaystyle{\min_i}\big\{\lambda_{i+1}(0)-\lambda_i(0)\big\},
%\qquad\quad
%0<d< \min\bigg\{d_0, \frac{L}{3\alpha_0T}\bigg\},
\end{equation}
%
% this condition on $d$ is needed to guarantee that $(2L)/N - 3\alpha_0\cdot d\cdot\tau >L/N$
% so that one can take $\rho\doteq L/N$
% In turn, condition $0<\rho< (2L)/N - 3\alpha_0\cdot d\cdot\tau$ guarantees
% that the second condition in \eqref{supp-est1} is verified. Therefore, condition $0<d< \min\{d_0, L/(3N\alpha_0\tau)\}$,  
% is needed only to guarantee that the solution with initial data $\phi^\beta$ takes value in the set $[-L,L]$
% a t time $\tau$. Such a condition is not needed to define $\phi^\beta$.
%
%and an interval $I=[l_1, l_2]$, 
%and an interval $I\doteq [0, 2L]$, 
%letting $\alpha_0$ be the constant in~\eqref{lambdadef}, 
set
\begin{equation}
\label{Itauset-def2}
\xi_i^-
\doteq -L/2 -\lambda_{i}(0)\cdot T,
\qquad\quad
\xi_i^+
\doteq \xi_i^-+L,
\qquad\quad i=1,\dots,N,
\end{equation}
%
%with $\alpha_0$ as in~\eqref{lambdadef},  
and consider the family of $N$-tuples of maps
\begin{equation}
\label{PCNset}
\mathcal{PC}^{1,N}_{[L,d,b,T]}\doteq
\Big\{
\beta=(\beta_1,\dots,\beta_N)\in(\mathcal{PC}^1_{[d,b]})^{N}\ \big| \ \mathrm{Supp}(\beta_i)\subset [\xi_i^-, \xi_i^+],
\ i=1,\dots,N
\Big\},
\end{equation}
%
%of length $|I|=l_2-l_1$, 
%set $\xi_i^-\doteq \inf (I+(i-1)\!\cdot\! l)$, $\xi_i^+\doteq \sup (I+(i-1)\!\cdot\! l)$, 
%$N$ maps $\beta_1,\dots,\beta_N\in \mathcal{PC}^1_b$,
% with  $\mathrm{Supp}(\beta_i)\subset I+(i-1)\cdot l$, $i=1,\dots,N$.
%Notice that, by definition~\eqref{Itauset-def}, the Lebesgue measure $|
where $\mathcal{PC}^1_{[d,b]}$ denotes the class of functions introduced in~\eqref{PCset}.
Observe that, by~\eqref{T-assumption1}, \eqref{Itauset-def2}, one
has
\begin{equation}
\label{xiest1}
\xi_{i+1}^+\leq \xi_i^-\qquad\forall~i=1,\dots, N-1.
\end{equation}
Then,
let $\beta=(\beta_1,\dots,\beta_N)\in\mathcal{PC}^{1,N}_{[L,d,b,T]}$, 
and define  the function
%s $\beta:\R\to \R$, 
$\phi^\beta:\R\to \Omega$, by setting
\begin{align}
%\label{betadef}
%\beta(x)&\doteq
%%
%\begin{cases}
%\beta_i(x)\ \ &\text{if}\quad \ x\in \mathrm{Supp}(\beta_i), \ i=1,\dots,n,
%%
%\\
%\noalign{\smallskip}
%\ \ 0&\text{otherwise,}
%\end{cases}
%%
%\\
%\noalign{\medskip}
\label{phidef}
\phi^\beta(x)&\doteq \sum_i \phi^{\beta_i}_i(x)=
\begin{cases}
\phi^{\beta_i}_i(x)\ \ &\text{if}\quad \ x\in \mathrm{Supp}(\beta_i), \ i=1,\dots,N,
\\
\noalign{\smallskip}
\ \ 0&\text{otherwise,}
\end{cases}
\end{align}
where 
\begin{equation}
\label{fibetadef2}
\phi^{\beta_i}_i(x)\doteq R_i(\beta_i(x))
\end{equation}
denotes a map defined as in~\eqref{j-initial wave}
in connection with $\beta_i\in\mathcal{PC}^1_{[d,b]}$.
The next Lemma shows that if we also assume 
\begin{equation}
\label{db-assumption2}
0 <  d  \leq  \frac{1}{2 \alpha_4 N e^{\alpha_2/\alpha_3}} \cdot
\min\bigg\{\overline d,\, \frac{\Delta_{\wedge}\lambda}{2\alpha_1}\bigg\},
%\min\bigg\{
%% may not needed \frac{L}{3\alpha_0\cdot T}
%%\frac{L}{3\alpha_0\cdot T},\, 
%\frac{d_0}{3\alpha_4 e^{\alpha_2/\alpha_3}},\, 
%\frac{\Delta_{\min}\lambda}{6\alpha_0\alpha_4 e^{\alpha_2/\alpha_3}}
%\bigg\},
\qquad\quad
0<b\leq 
\min\bigg\{\frac{1}{2\alpha_1\cdot T},\, 
\frac{\Delta_{\wedge}\lambda}{4 \alpha_3 \alpha_4 N^2\cdot L}
%\frac{1}{9\alpha_3\alpha_4 N^2\cdot T}
\bigg\},
\end{equation}
$($interpreting 
${1}/{\alpha_1}\doteq \infty$
%${1}/{(2\alpha_1\cdot T)}\doteq \infty$, 
when $\alpha_1=0$$)$, 
for every given $\beta\in\mathcal{PC}^{1,N}_{[L,d,b,T]}$
we can  apply Lemma~\ref{i-simple-wave} and Lemma~\ref{smooth-bounds}
to derive the existence of a classical solution of~\eqref{conlaws}
with initial data~$\phi^\beta$
which possesses the desired properties. 
\begin{proposition}
\label{n-simple-waves-overlap}
Let $f:\Omega \to \R^N$ be a $C^2$ map defined on an open, connected domain $\Omega\subset\R^N$,
$\Omega\supset B_{\overline d}\doteq \{u\in\R^n\, |\, |u|\leq\overline d\,\}$,
% containinig the origin,
and assume that the Jacobian matrix $Df(u)$ has $N$ real, distinct 
 eigenvalues $\lambda_1(u)<...<\lambda_N(u)$.
Given $L,T, d,b>0$, satisfying~\eqref{T-assumption1}, \eqref{db-assumption2}
$($with $\alpha_1$ as in~\eqref{lambdadef}, 
$\alpha_l$, $l=2,3,4$, as in~\eqref{alpha-1-2-def2},
and $\Delta_{\wedge}\lambda$ as in~\eqref{T-assumption1}$)$,
let $\mathcal{PC}^{1,N}_{[L,d,b,T]}$ be the class of maps introduced in~\eqref{Itauset-def2}-\eqref{PCNset},
and consider a map $\phi^\beta:\R\to \Omega$  as in~\eqref{phidef}, defined in connection with
an $N$-tuple $\beta=(\beta_1,\dots,\beta_N)\in \mathcal{PC}^{1,N}_{[L,d,b,T]}$.
Then, 
%for every
%$\tau\leq 1/{(3\alpha_0\cdot b)}$ 
%the map $u(t,x)$ defined in~\eqref{solution-ui} provides 
there exists a classical solution $u(t,x)$ of
the Cauchy problem
\begin{align}
\label{Cauchy-phi-n}
&u_t+f(u)_x=0,
\\
\noalign{\smallskip}
\label{Cauchy-phi-in-n}
&u(0,\cdot)=\phi^\beta,
\end{align}
on $[0,T]\times\mathbb{R}$. Moreover, setting
\begin{equation}
\label{alfa4def}
\alpha_5\doteq
\frac{\lambda_N(0)-\lambda_1(0)}{\Delta_{\wedge}\lambda},
\end{equation}
one has
\begin{equation}
\label{supp-w}
%|\mathrm{Supp}(u(0,\cdot))| \leq NL,
%\qquad\qquad
\mathrm{Supp}(u(T,\cdot))\subseteq [-L\cdot(1+\alpha_5),\, L\cdot(1+\alpha_5)],
%\Bigg[\!-L\cdot\!\Bigg(1\!+\!\frac{\lambda_N(0)-\lambda_1(0)}{\Delta_{\min}\lambda}\Bigg),\,
%L\cdot\!\Bigg(1\!+\!\frac{\lambda_N(0)-\lambda_1(0)}{\Delta_{\min}\lambda}\Bigg)\Bigg],
\end{equation}
%
%$(|\mathrm{Supp}(u(0,\cdot))|$ denoting the Lebesgue measure of  $\mathrm{Supp}(u(0,\cdot)))$
and, for every $t\leq T$, there hold:
\begin{gather}
\label{est1swn}
\|u(t,\cdot)\|_{L^{\infty}(\R,\Omega)} 
\leq 2\alpha_4 N\, e^{\frac{\alpha_2}{\alpha_3}} \cdot d,
\qquad\
\|u_x(t,\cdot)\|_{L^\infty(\R,\Omega)}\leq 4\alpha_4 N\cdot b.
%
%\\
%\noalign{\medskip}
%\label{est2swn}
%\|u(t,\cdot)\|_{L^1(\R,\Omega)}\leq \frac{3}{2}\cdot\|\phi^\beta\|_{L^1(\R,\Omega)},
%\qquad 
%%\label{bounded-tot. var.}
%\mathrm{Tot. var. }(u(t,\cdot))\leq \|\dot\phi^\beta\|_{L^1(\R,\Omega)}.
%%2\cdot \mathrm{Tot. var. }(\phi^\beta_i),
\end{gather}
\end{proposition}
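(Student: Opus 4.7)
The plan is to split $[0,T]$ at the threshold $\tau_*:=T-L/\Delta_\wedge\lambda$, which is nonnegative by~\eqref{T-assumption1}, and build $u$ in two phases: on $[0,\tau_*]$ as a superposition of $N$ non-overlapping simple waves produced by Lemma~\ref{i-simple-wave}, and on $[\tau_*,T]$ by applying Lemma~\ref{smooth-bounds} to the piecewise-$C^1$ datum $u(\tau_*,\cdot)$.

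\textbf{Phase 1.} For each $i$, Lemma~\ref{i-simple-wave} applied with $\beta=\beta_i\in\mathcal{PC}^1_{[d,b]}$ produces an $i$-simple wave $u_i$ on $[0,T]\times\R$, valid because $b\leq 1/(2\alpha_1 T)$ by~\eqref{db-assumption2}. The key geometric remark is that $\beta_i$ is continuous and compactly supported in $[\xi_i^-,\xi_i^+]$, so $\beta_i(\xi_i^\pm)=0$ and $\phi^{\beta_i}_i(\xi_i^\pm)=R_i(0)=0$; by~\eqref{i-character} the $i$-th characteristics through $\xi_i^\pm$ propagate at exactly $\lambda_i(0)$, and monotonicity~\eqref{d-i-characterbound} yields $\mathrm{Supp}(u_i(t,\cdot))\subseteq[\xi_i^-+\lambda_i(0)t,\ \xi_i^++\lambda_i(0)t]$. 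Substituting~\eqref{Itauset-def2} shows that the gap between two consecutive supports equals $(\lambda_{i+1}(0)-\lambda_i(0))(T-t)-L\geq\Delta_\wedge\lambda(T-t)-L$, which is strictly positive for every $t<\tau_*$ and vanishes at $t=\tau_*$ for at most one pair. Therefore $u:=\sum_i u_i$ is a classical solution of~\eqref{Cauchy-phi-n}--\eqref{Cauchy-phi-in-n} on $[0,\tau_*]$, with $\|u(t,\cdot)\|_{L^\infty}\leq d$ and $\|u_x(t,\cdot)\|_{L^\infty}\leq 2b$ by~\eqref{est12sw}.

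\textbf{Phase 2.} I take $\phi:=u(\tau_*,\cdot)$ as the new initial datum and apply Lemma~\ref{smooth-bounds} on the time interval of length $T-\tau_*=L/\Delta_\wedge\lambda$. The two smallness hypotheses reduce to $\|\phi\|_{L^\infty}\leq d\leq(\overline d\,e^{-\alpha_2/\alpha_3})/(2\alpha_4 N)$, which is the first line of~\eqref{db-assumption2}, and $\|\phi'\|_{L^\infty}\leq 2b\leq 1/(2\alpha_3\alpha_4 N^2(T-\tau_*))$, which rearranges precisely to $b\leq\Delta_\wedge\lambda/(4\alpha_3\alpha_4 N^2 L)$, the second line of~\eqref{db-assumption2}. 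Lemma~\ref{smooth-bounds} then produces a classical solution on $[\tau_*,T]$ satisfying $\|u\|_{L^\infty}\leq 2\alpha_4 Ne^{\alpha_2/\alpha_3}d$ and $\|u_x\|_{L^\infty}\leq 2\alpha_4 N\cdot 2b=4\alpha_4 Nb$, and patching with the Phase 1 solution gives the global classical solution on $[0,T]\times\R$ and the bounds~\eqref{est1swn}.

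\textbf{Support estimate and main obstacle.} The Phase 1 computation gives $\mathrm{Supp}(u(\tau_*,\cdot))\subseteq[-L/2-\lambda_N(0)L/\Delta_\wedge\lambda,\ L/2-\lambda_1(0)L/\Delta_\wedge\lambda]$, an interval of length $(1+\alpha_5)L$. On $[\tau_*,T]$, finite propagation speed enlarges this interval by at most $(\lambda_N(0)+\alpha_1\|u\|_{L^\infty})(T-\tau_*)$ on the right and $(-\lambda_1(0)+\alpha_1\|u\|_{L^\infty})(T-\tau_*)$ on the left; the leading terms contribute $(\lambda_N(0)-\lambda_1(0))L/\Delta_\wedge\lambda=\alpha_5 L$, while the $O(d)$ correction $\alpha_1\|u\|_{L^\infty}(T-\tau_*)$ is absorbed into an extra $L/2$ on each side thanks to the constraint $d\leq\Delta_\wedge\lambda/(4\alpha_1\alpha_4 Ne^{\alpha_2/\alpha_3})$ built into~\eqref{db-assumption2}. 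Hence $\mathrm{Supp}(u(T,\cdot))\subseteq[-(1+\alpha_5)L,\,(1+\alpha_5)L]$, which is~\eqref{supp-w}. The technical heart of the proof is the triple dovetailing at the single threshold $\tau_*=T-L/\Delta_\wedge\lambda$: strict disjointness of the simple-wave supports on $[0,\tau_*)$, admissibility of Lemma~\ref{smooth-bounds} on $[\tau_*,T]$, and absorption of the $O(d)$ propagation correction into the $+L$ slack of the support bound must all be simultaneously enforced, and this is exactly what the two inequalities on $d$ and $b$ in~\eqref{db-assumption2} encode.
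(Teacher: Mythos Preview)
Your proof is correct and follows essentially the same two-phase strategy as the paper: non-interacting simple waves via Lemma~\ref{i-simple-wave} on $[0,\tau_*]$ with $\tau_*=T-L/\Delta_\wedge\lambda$, then Lemma~\ref{smooth-bounds} on $[\tau_*,T]$, with the constraints in~\eqref{db-assumption2} tuned exactly to make both phases work. For the support bound~\eqref{supp-w} you take a slightly more direct route than the paper---a crude finite-propagation-speed enlargement of $\mathrm{Supp}(u(\tau_*,\cdot))$---whereas the paper introduces the interaction regions $\Lambda_i^\pm$ and tracks characteristics from the first crossing times $\tau_i^\pm\ge\tau_*$; both arguments use the same constraint $d\le\Delta_\wedge\lambda/(4\alpha_1\alpha_4 N e^{\alpha_2/\alpha_3})$ and yield the same conclusion, so your version is a legitimate simplification.
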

\begin{proof}
We will prove the existence of a classical solution of the Cauchy problem~\eqref{Cauchy-phi-n}-\eqref{Cauchy-phi-in-n}
on $[0,T]$ satisfying~\eqref{supp-w}-\eqref{est1swn},  
by first showing that such a solution is obtained on $[0, T-L/\Delta_{\wedge}\lambda]$ as a superposition of simple waves
supported on disjoint set, and next deriving a-priori bounds on the solution and its support in the interval $[T-L/\Delta_{\wedge}\lambda,\,T]$.

\noindent{\bf 1.} 
Given $\beta=(\beta_1,\dots,\beta_N)\in \mathcal{PC}^{1,N}_{[L,d,b,T]}$,
define as in~\eqref{i-character} the functions
\begin{equation}
\label{i-char2}
x^\flat_i(t,y)\doteq y+\lambda_i(\phi^{\beta_i}_i(y))\cdot t,
\qquad t\geq 0,
%\qquad t\in \bigg[0, T-\frac{L}{\displaystyle{\min_i}(\lambda_{i+1}(0)-\lambda_i(0))}\bigg],
\end{equation}
for each $i=1,\dots,N$.
%Observe that by~\eqref{a134} one has
%%
%%\begin{equation}
%$2 \alpha_1< 9 \alpha_3 \alpha_4 N^2$.
%Hence, since~\eqref{db-assumption2} implies $t \leq 1/(9 \alpha_3 \alpha_4 N^2 \cdot b)\leq 1/(2\alpha_1\cdot b)$
Since ~\eqref{db-assumption2} implies $t \leq  1/(2\alpha_1\cdot b)$
for all $t\in [0,T]$, by the inequality in~\eqref{d-i-characterbound} we
deduce 
%as for the proof of Lemma~\ref{i-simple-wave} 
that the maps $y\mapsto x^\flat_i(t,y)$, $i=1,\dots,N$,  are
one-to-one in $\R$, for every fixed $t\in [0,T]$.
Then, setting
\begin{equation}
\label{inverse-yi}
z^\flat_i(t,\cdot)\doteq\ (x^\flat_i)^{\!^{-1}}\!(t,\cdot),
\qquad i=1,\dots,N,
\end{equation}
and letting $\phi^{\beta_i}_i$ be the map in~\eqref{fibetadef2}, define the function
\begin{equation}
\label{solution-ui}
u^\flat(t,x)\doteq 
\begin{cases}
\phi^{\beta_i}_i(z^\flat_i(t,x))\ \ &\text{if}\quad \ x\in [x^\flat_i(t,\xi_i^-),\, x^\flat_i(t,\xi_i^+)]\setminus
\displaystyle{\bigcup_{j\neq i}}[x^\flat_j(t,\xi_j^-),\, x^\flat_j(t,\xi_j^+)], \ \ \ i=1,\dots,N,
\\
\noalign{\smallskip}
\ \ 0&\text{otherwise,}
\end{cases}
\end{equation}
on $[0, T] \times\R$.
%on $[0, T-L/\displaystyle{\min_i}(\lambda_{i+1}(0)-\lambda_i(0))]\times\R$.
 Observe that, because of~\eqref{PCNset}, \eqref{fibetadef2}, one has $\phi^{\beta_i}_i(\xi_i^\pm)=R_i(0)=0$,
for all $i=1,\dots,N$.
Hence, recalling~\eqref{T-assumption1}, \eqref{Itauset-def2}, and by~\eqref{i-char2}, there holds
\begin{equation}
\label{chardist2}
  x^\flat_{i+1}(t,\xi_{i+1}^\pm) \leq x^\flat_i(t,\xi_i^\pm)
  \qquad\forall~t\in \bigg[0, T-\frac{L}{\Delta_{\wedge}\lambda}
  %{\displaystyle{\min_i}(\lambda_{i+1}(0)-\lambda_i(0))}
  \bigg],
  \qquad i=1,\dots, N-1,
\end{equation}
so that one has
\begin{equation}
\label{solution-ui2}
u^\flat(t,x)=
\begin{cases}
\phi^{\beta_i}_i(z^\flat_i(t,x))\ \ &\text{if}\quad \ x\in [x^\flat_i(t,\xi_i^-),\, x^\flat_i(t,\xi_i^+)],\ \ i=1,\dots,N,
\\
\noalign{\smallskip}
\ \ 0&\text{otherwise,}
\end{cases}
\end{equation}
for all $(t,x)\in [0, T-L/\Delta_{\wedge}\lambda]
%\displaystyle{\min_i}(\lambda_{i+1}(0)-\lambda_i(0))]
\times\R$.

By~\eqref{solution-ui2} the restriction of $u^\flat(t,x)$ to the domain 
$[0, T-L/\Delta_{\wedge}\lambda
%\displaystyle{\min_i}(\lambda_{i+1}(0)-\lambda_i(0))
]\times\R$
is a Lipschitz continuous map supported
on the disjoint union of  sets 
\begin{equation}
\label{Ddef}
D_i\doteq \big\{(t,x)\ | \  t\in [0,T-L/\Delta_{\wedge}\lambda], \ 
x\in [x^\flat_i(t,\xi_i^-), x^\flat_i(t,\xi_i^+)]\big\},
\qquad i=1,\dots, N.
%x\in D_i(t)\big\},\qquad D_i(t)\doteq [x^\flat_i(t,\xi_i^-), x^\flat_i(t,\xi_i^+)],
\end{equation}
Since 
%$u^\flat(t,\cdot)_{|D_i(t)}=\phi^{\beta_i}_i(z_i(t,x))$, for every 
%$t\in [0, T-L/\displaystyle{\min_i}(\lambda_{i+1}(0)-\lambda_i(0))]$
%and for all $i=1,\dots,N$,
\eqref{db-assumption2} implies $b<1/(2\alpha_1\cdot T)$,
we know by Lemma~\ref{i-simple-wave} that $u^\flat(t,x)$ is a classical solution of~\eqref{Cauchy-phi-n}
 on each set $D_i$. Moreover, recalling that $z^\flat_i(0,\cdot)$ is the identity map,
 by~\eqref{solution-ui2} one has $u^\flat(0,x)=\phi^{\beta_i}_i(x)$, for all $x\in[\xi_i^-, \xi_i^+]$, $i=1,\dots, N$.
Therefore, looking at~\eqref{PCNset}, \eqref{phidef}, we deduce that~\eqref{Cauchy-phi-in-n} holds. Hence,
it follows that $u^\flat(t,x)$ provides a classical solution of~\eqref{Cauchy-phi-n}-\eqref{Cauchy-phi-in-n} 
on $[0, T-L/\Delta_{\wedge}\lambda]\times\mathbb{R}$. 
\smallskip

Notice that, 
%since $\phi^\beta$ is a piecewise $C^1$ map with bounded derivative
%and compact support, it follows  that $\phi^\beta\in W^{1,1}(\R,\Omega)$.
%Then, 
%setting $D_i(t)\doteq [x_i(t,\xi_i^-), x_i(t,\xi_i^+)]$, $i=1,\dots, N$,  and 
letting $u^\flat(t,\cdot)_{|D_i(t)}$ denote the restriction of $u^\flat(t,\cdot)$ to the set 
$D_i(t)\doteq [x^\flat_i(t,\xi_i^-), x^\flat_i(t,\xi_i^+)]$,
we deduce from \eqref{phidef}, \eqref{solution-ui2}, that for every 
$t\in [0, T-L/\Delta_{\wedge}\lambda]$ there holds
\begin{equation}
\label{swn-infnorm}
\begin{aligned}
\|u^\flat(t,\cdot)\|_{L^{\infty}} 
&= \max_i \|u^\flat(t,\cdot)_{|D_i(t)}\|_{L^{\infty}},
\qquad\quad
\|u^\flat_x(t,\cdot)\|_{L^{\infty}} 
= \max_i \|u^\flat(t,\cdot)_{|D_i(t)}\|_{L^{\infty}},
\\
\noalign{\smallskip}
\|\phi^\beta\|_{L^{\infty}} &= \max_i \|\phi_i^{\beta_i}\|_{L^{\infty}},
\qquad\qquad\qquad\qquad\
\Big\|\frac{d}{dx}\phi^\beta\Big\|_{L^{\infty}} = \max_i \Big\|\frac{d}{dx}\phi_i^{\beta_i}\Big\|_{L^{\infty}}.
%%
%\\
%\noalign{\smallskip}
%\|u^\flat(t,\cdot)\|_{L^1}
%&= \sum_{i=1}^N \|u^\flat(t,\cdot)_{|D_i(t)}\|_{L^1},
%\qquad\qquad\qquad
%\|\phi^\beta\|_{L^1} = \sum_{i=1}^N \|\phi_i^{\beta_i}\|_{L^1},
%%
%\\
%\noalign{\smallskip}
%\mathrm{Tot. var. }(u^\flat(t,\cdot))
%&=\sum_{i=1}^N
%\mathrm{Tot. var. }(u^\flat(t,\cdot)_{|D_i(t)}),
%\qquad\quad\ \
% \|\dot\phi^\beta\|_{L^1}=\sum_{i=1}^N \|\dot\phi_i^{\beta_i}\|_{L^1}.
\end{aligned}
\end{equation}
Therefore, relying on the estimate~\eqref{est12sw}
%\eqref{est3sw}, 
for each $u^\flat(t,\cdot)_{|D_i(t)}$,
we derive from~\eqref{swn-infnorm} the estimates
\begin{equation}
\label{est3swn}
\|u^\flat(t,\cdot)\|_{L^{\infty}(\R,\Omega)} 
=\|\phi^\beta\|_{L^{\infty}(\R,\Omega)}\leq d,
\qquad\quad
\|u^\flat_x(t,\cdot)\|_{L^{\infty}(\R,\Omega)} 
\leq 2\Big\|\frac{d}{dx}\phi^\beta\Big\|_{L^{\infty}(\R,\Omega)}\leq 2b,
\end{equation}
for all $t\in [0, T-L/\Delta_{\wedge}\lambda]$.
%%
%\begin{equation}
%\label{est3swn}
%\begin{aligned}
%\|u(t,\cdot))\|_{L^{\infty}(\R,\Omega)} 
%&=\|\phi^\beta\|_{L^{\infty}(\R,\Omega)}\leq d,
%%
%\\
%\noalign{\medskip}
%\|u(t,\cdot)\|_{L^1(\R,\Omega)}&\leq \frac{3}{2}\cdot\|\phi^\beta\|_{L^1(\R,\Omega)},
%%\qquad 
%%
%\\
%\noalign{\medskip}
%%\label{bounded-tot. var.}
%\mathrm{Tot. var. }(u(t,\cdot))&\leq \|\dot\phi^\beta\|_{L^1(\R,\Omega)},
%\end{aligned}
%\qquad \qquad
%\forall~t\in [0, T-L/\displaystyle{\min_i}(\lambda_{i+1}(0)-\lambda_i(0))].
%\end{equation}
%%
\smallskip

\noindent
{\bf 2.}
Observe now that
%, by definitions~\eqref{phidef}, \eqref{fibetadef2},
%and because of~\eqref{normalized-eigenvect}, \eqref{Rparam}, \eqref{dphi},
%$\phi^\beta$
%
\begin{equation}
\phi(x)\doteq u^\flat(T-L/\Delta_{\wedge}\lambda,\,x), \qquad x\in\R,
\end{equation}
is a piecewise $C^1$ map that satisfies the estimates~\eqref{est3swn}, 
%\begin{equation}
%\label{phib-infnorm}
%\|\phi^\beta\|_{L^{\infty}} = \max_i \|\phi_i^{\beta_i}\|_{L^{\infty}}
%\leq d,
%\qquad\qquad\qquad\qquad\
%\Big\|\frac{d}{dx}\phi^\beta\Big\|_{L^{\infty}} = \max_i \Big\|\frac{d}{dx}\phi_i^{\beta_i}\Big\|_{L^{\infty}}
%\leq b.
%\end{equation}
%
with $d,b$ verifying the 
bounds~\eqref{db-assumption2}.
Thus, applying Lemma~\ref{smooth-bounds} we deduce  the
existence of a classical solution $u^\sharp(t,x)$ 
%$u(t,x)$ 
of~\eqref{Cauchy-phi-n} on the domain $[T-L/\Delta_{\wedge}\lambda,\, T]\times\R$,
%~\eqref{Cauchy-phi-in-n} 
%on the domain $[0, T]$, that satisfies the bounds~\eqref{est1swn} for all $t\in [0,T]$. 
that assumes the initial data
$u^\sharp(T-L/\Delta_{\wedge}\lambda,\,\cdot)=\phi$,
at time $t=T-L/\Delta_{\wedge}\lambda$.
Moreover, by~\eqref{est1-smooth}, \eqref{est3swn}, there holds
%by the uniqueness of classical solutions it follows that
%%
%\begin{equation}
%\label{u-ufl-ush}
%u(t,x)=
%u^\flat(t,x)\quad \qquad \forall~t\in [0,\, T-L/\Delta_{\wedge}\lambda].
%\end{equation}
%
  %
 \begin{equation}
 \label{est2-smooth}
 \|u^\sharp(t,\cdot)\|_{L^{\infty}(\R,\Omega)}\leq 2\alpha_4 N\, e^{\frac{\alpha_2}{\alpha_3}} \cdot d,
 \qquad\qquad
 \|u^\sharp_x(t,\cdot)\|_{L^{\infty}(\R,\Omega)}\leq 4\alpha_4 N\cdot b,
 \end{equation}
for all $t\in [T-L/\Delta_{\wedge}\lambda,\, T]$.
Therefore, the function defined by
\begin{equation}
\label{u-ufl-ush}
u(t,x)\doteq
\begin{cases}
u^\flat(t,x)\quad &\text{if}\qquad t\in [0,\, T-L/\Delta_{\wedge}\lambda],
\\
\noalign{\smallskip}
u^\sharp(t,x)\quad &\text{if}\qquad t\in \,]T-L/\Delta_{\wedge}\lambda, \, T],
\end{cases}
\end{equation}
provides a classical solution of~\eqref{Cauchy-phi-n},~\eqref{Cauchy-phi-in-n} that, 
because of~\eqref{est3swn}, \eqref{est2-smooth},  
satisfies the bounds~\eqref{est1swn} for all $t\in [0,T]$. 

To conclude the proof of the proposition we shall
derive now an estimate of the support of $u(T,x)$. Consider, for each $i$-th family,
the $i$-th characteristic curve
of $u$ through a point $(\tau,y)\in [0,T]\times\R$, denoted by $t\mapsto x_i(t;\, \tau, y)$, $t\in [0,T]$,
and defined as the (unique) solution of the Cauchy problem
\begin{equation}
\label{char-u-def}
\dot x = \lambda_i(u(t,x)),
\qquad\quad x(\tau)=y.
\end{equation}
Set, for every $i=1,\dots,N$, 
\begin{equation}
\label{tau-i-def}
\begin{aligned}
\tau_i^-&\doteq \inf\big\{t\in[0,T]\, ; \, x_i(t; 0, \xi_i^-)=x_j(t; 0, \xi_j^\pm)\ \  \text{for some}\ \ j\neq i\big\},
\qquad\
y_i^-\doteq  x_i(\tau_i^-; 0, \xi_i^-),
\\
\noalign{\smallskip}
\tau_i^+&\doteq \inf\big\{t\in[0,T]\, ; \, x_i(t; 0, \xi_i^+)=x_j(t; 0, \xi_j^\pm)\ \  \text{for some}\ \ j\neq i\big\},
\qquad\
y_i^+\doteq  x_i(\tau_i^+; 0, \xi_i^+),
\end{aligned}
\end{equation}
where the equality  $x_i(t; 0, \xi_i^-)=x_j(t; 0, \xi_j^\pm)$ is interpreted as
$x_i(t; 0, \xi_i^-)=x_j(t; 0, \xi_j^-)$ or $x_i(t; 0, \xi_i^-)=x_j(t; 0, \xi_j^+)$, and analogously
for $x_i(t; 0, \xi_i^+)=x_j(t; 0, \xi_j^\pm)$.
Next, consider the union of the regions confined between the minimal and maximal characteristics emanating 
from the points $(\tau_i^\pm, y_i^\pm)$, $i=1,\dots,N$:
\begin{equation}
\label{Lambda-def}
\Lambda\doteq 
\bigcup_i \big(\Lambda_i^-\cup  \Lambda_i^+\big),\qquad
\begin{aligned}
\Lambda_i^-&\doteq 
\Big\{(t,x)\in[\tau_i^-,T]\times\R
\, ; \, x_1(t; \tau_i^-, y_i^-)\leq x \leq x_N(t; \tau_i^-, y_i^-)
\Big\},
\\
\noalign{\smallskip}
\Lambda_i^+&\doteq 
\Big\{(t,x)\in[\tau_i^+,T]\times\R
\, ; \, x_1(t; \tau_i^+, y_i^+)\leq x \leq x_N(t; \tau_i^+, y_i^+)
\Big\}.
\end{aligned}
\end{equation}

\begin{figure}[htbp] \label{fig:Lambdas}
\begin{center}
\input{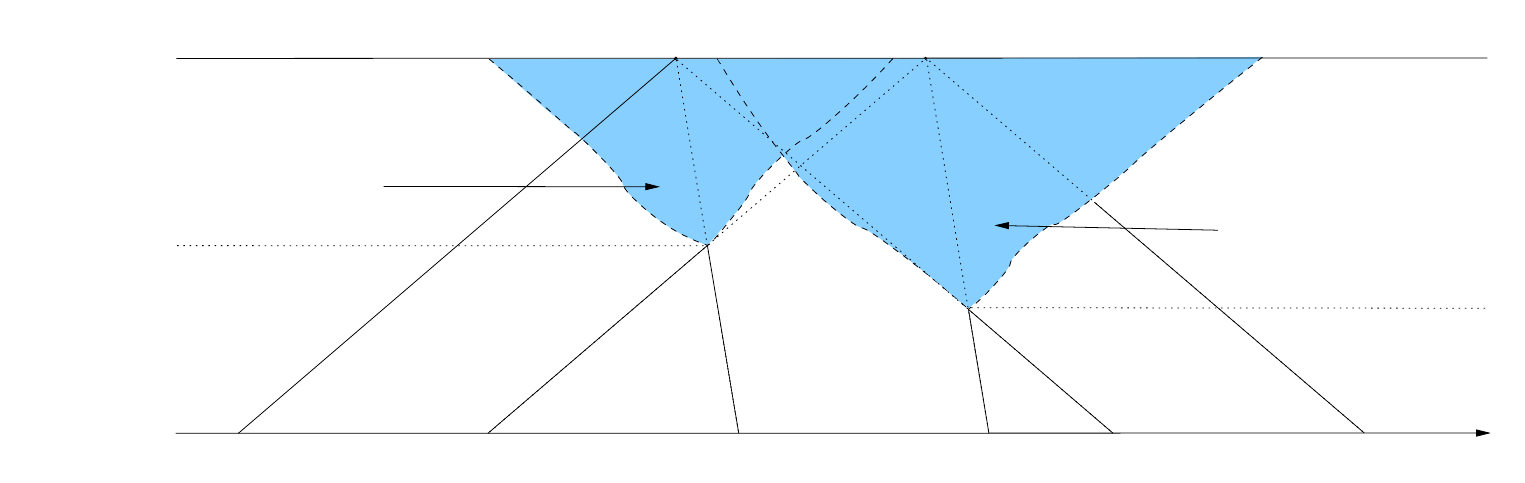_t}
\end{center}
\caption{The sets $\Lambda_{i}^{\pm}$}
\end{figure}
Observe that $([0,T]\times \R)\setminus\Lambda$ is a domain of determinacy for 
the Cauchy problem~\eqref{Cauchy-phi-n}-\eqref{Cauchy-phi-in-n}, since, for every
fixed $(\tau, y)\in ([0,T]\times \R)\setminus\Lambda$ and for any $i=1,\dots,N$, one has 
$\{(t,x_i(t;\,\tau,y))\,; 0\leq t \leq \tau\}\subset ([0,T]\times \R)\setminus\Lambda$.
Therefore, we deduce that the classical solution $u(t,x)$ of~\eqref{Cauchy-phi-n},~\eqref{Cauchy-phi-in-n}
coincides with the function $u^\flat(t,x)$ defined
in~\eqref{solution-ui} on the whole region $([0,T]\times \R)\setminus\Lambda$,
and that there hold
\begin{equation}
\label{solution-ui3}
u(t,x)=
\begin{cases}
\phi^{\beta_i}_i(z^\flat_i(t,x))\ \ &\text{if}\quad \ x\in [x^\flat_i(t,\xi_i^-),\, x^\flat_i(t,\xi_i^+)],\ \ i=1,\dots,N,
\\
\noalign{\smallskip}
\ \ 0&\text{otherwise,}
\end{cases}
\end{equation}
for all $(t,x)\in ([0,T]\times \R)\setminus\Lambda$, and 
\begin{equation}
\label{i-char3}
x_i(t;\,0,\xi_i^\pm)=x^\flat_i(t,\xi_i^\pm)
\qquad\forall~t\in [0,\tau_i^\pm],\qquad i=1,\dots,N,
\end{equation}
with obvious meaning of notations.
Notice that, since by~\eqref{PCNset}, \eqref{fibetadef2} one has $\phi^{\beta_i}_i(\xi_i^\pm)=R_i(0)=0$,
it follows from~\eqref{Itauset-def2}, \eqref{i-char2} that 
\begin{equation}
\label{char-i-T}
x^\flat_i(T,\xi_i^\pm)=\pm L/2.
\end{equation}
Thus, letting $u(T,\cdot)_{|D}$ denote the restriction of $u(T,\cdot)$ to a set $D$, we deduce from~\eqref{solution-ui3} that
\begin{equation}
\label{supp-est1}
\mathrm{Supp}\Big(u(T,\cdot)_{\big|\{x;\, (T,x)\notin\Lambda\}}\Big)\subseteq [-L/2, L/2].
\end{equation}
On the other hand, observe that by~\eqref{chardist2}, \eqref{tau-i-def}, \eqref{i-char3}, one has
\begin{equation}
\label{bound-taui}
\inf\big\{\tau_i^-, \tau_i^+;\, i=1,\dots,N\big\}\geq T-\frac{L}{\Delta_{\wedge}\lambda}.
\end{equation}
Moreover, the first estimate in~\eqref{est1swn},
together with the bound~\eqref{db-assumption2}, imply in particular $\|u(t,\cdot)\|_{L^{\infty}} < \overline d$,
for all $t\in [0,T]$, while~\eqref{PCNset}, \eqref{fibetadef2}, \eqref{tau-i-def}, \eqref{solution-ui3} yield
\begin{equation}
\label{uchar-i-T}
u(\tau_i^-, y_i^-)=\phi^{\beta_i}_i(\xi_i^-)=0,\qquad\qquad
u(\tau_i^+, y_i^+)=\phi^{\beta_i}_i(\xi_i^+)=0.
\end{equation}
Thus, relying on~\eqref{lambdadef}, \eqref{db-assumption2}, \eqref{est1swn}, \eqref{i-char2},
\eqref{char-u-def}, \eqref{i-char3}, \eqref{char-i-T}, \eqref{bound-taui}, \eqref{uchar-i-T} we derive
\begin{equation}
\label{supp-est2}
\begin{aligned}
x_N(T; \tau_i^\pm, y_i^\pm)
%-x_1(T; \tau_i^\pm, y_i^\pm)
&\leq y_i^\pm +\bigg(\lambda_N(0)+2 \alpha_1\,\alpha_4 N\, e^{\frac{\alpha_2}{\alpha_3}} \cdot d
\bigg)\cdot(T-\tau_i^\pm)
\\
\noalign{\smallskip}
&=x^\flat_i(\tau_i^\pm,\,\xi_i^\pm) +
\bigg(\lambda_N(0)+2 \alpha_1\,\alpha_4 N\, e^{\frac{\alpha_2}{\alpha_3}}\cdot d
\bigg)\cdot(T-\tau_i^\pm)
\\
\noalign{\smallskip}
&\leq x^\flat_i(T,\,\xi_i^\pm) + 
\bigg((\lambda_N(0)-\lambda_i(0))+2 \alpha_1\,\alpha_4 N\, e^{\frac{\alpha_2}{\alpha_3}}\cdot d
\bigg)
\cdot \frac{L}{\Delta_{\wedge}\lambda}
\\
\noalign{\smallskip}
&\leq L\cdot\Bigg(\frac{1}{2}+\frac{2 \alpha_1\,\alpha_4 N\, e^{\frac{\alpha_2}{\alpha_3}}}{\Delta_{\wedge}\lambda}
\cdot d+\frac{\lambda_N(0)-\lambda_1(0)}{\Delta_{\wedge}\lambda}\Bigg)
\\
\noalign{\smallskip}
&\leq L\cdot\Bigg(1+\frac{\lambda_N(0)-\lambda_1(0)}{\Delta_{\wedge}\lambda}\Bigg),
\end{aligned}
\end{equation}
and, analogously,
\begin{equation}
\label{supp-est3}
x_1(T; \tau_i^\pm, y_i^\pm)
\geq -L\cdot\Bigg(1+\frac{\lambda_N(0)-\lambda_1(0)}{\Delta_{\wedge}\lambda}\Bigg).
\end{equation}
Then, recalling~\eqref{alfa4def} and looking at the definition~\eqref{Lambda-def}
of $\Lambda$, we deduce from~\eqref{supp-est2}-\eqref{supp-est3} that there holds
\begin{equation}
\label{supp-est4}
\mathrm{Supp}\Big(u(T,\cdot)_{\big|\{x;\, (T,x)\in\Lambda\}}\Big)\subseteq [-L\cdot(1+\alpha_5),\, L\cdot(1+\alpha_5)].
\end{equation}
In turn, the inclusion \eqref{supp-est4} together with~\eqref{supp-est1} yields~\eqref{supp-w}, 
completing the proof of the proposition.
\end{proof}
\begin{remark} 
\label{uniqueness}
Classical solutions of conservation laws coincide  with the trajectory of 
the corresponding semigroup, whenever their initial data belongs to the domain
of the semigroup. In fact, by the result in~\cite[Section 5.3]{Dafermos:Book},
if~\eqref{conlaws}-\eqref{indata} admits a classical solution, then such a solution coincide with
any entropy weak solution of the same Cauchy problem.
Therefore, if we consider a general system of conservation laws that generates a
semigroup $(S_t)_{t\geq 0}$ of entropy weak solutions with a domain $\mathcal{D}_0$ as in~\eqref{domainsgr},
and we suppose that the map
%s $\phi^\beta_i, 
$\phi^\beta$ defined in
%~\eqref{j-initial wave}, 
\eqref{phidef}
%belong to $L^1(\R,\Omega)$, and
satisfies
%$\|\dot\phi_i^\beta\|_{L^1(\R,\Omega)}\leq \delta_0$,
%$\|\frac{d}{dx}\phi^\beta\|_{L^1(\R,\Omega)}\leq \delta_0$, 
$\mathrm{Tot. var. }(\phi^\beta) \leq\delta_0$, 
%by Lemma~\ref{i-simple-wave} and Lemma~\ref{n-simple-waves}
it follows that the classical solution $u(t,\cdot)$ of the Cauchy problem~\eqref{Cauchy-phi-n}-\eqref{Cauchy-phi-in-n}
provided by Proposition~\ref{n-simple-waves-overlap}
%maps defined in~\eqref{solution-u}, \eqref{solution-ui}, 
coincides with 
%$S_t \phi^\beta_i$ and 
$S_t \phi^\beta$.
%respectively.
%The same conclusion holds if we consider a genuinely nonlinear Temple class system
%that generates a
%semigroup of solutions $(S_t)_{t\geq 0}$ with a domain $\mathcal{D}$ as in~\eqref{domtemple},
%and we assume that 
%%$\phi^\beta_i\in L^1(\R,\Omega)$ and
%$w_i(u)\in [a_i, b_i]$ for all $u\in B_d$, $i=1,\dots, N$.
%Infact, under these assumptions,
%it clearly follows 
%%one can easily verify 
%that the map $\phi^\beta$ defined in~\eqref{phidef} belongs to $\mathcal{D}$.
\end{remark}

\subsection{Simple waves for rich systems}

Here we analyze the structure of simple waves for a class of systems, the so-called {\it rich systems}, 
that can be put in diagonal form with respect to Riemann coordinates.
We recall that a system of conservation laws~\eqref{conlaws} is called a {\it rich system} (see \cite{serre})
if there exists a set of coordinates $w=(w_1,\ldots,w_N)$ consisting of Riemann invariants $w_i=W_i(u)$,
$u\in \Omega$,
associated to each characteristic field $r_i$. 
It is not restrictive to
assume that the Riemann coordinates
are chosen so that  $W(0)=0$.
%$(w_1,\ldots,w_N)(0)=(0,\ldots,0).$
A necessary and sufficient condition for 
the existence of Riemann coordinates is the Frobenius involutive relation
$[r_i, r_j]=\alpha_i^j r_i + \alpha_j^i r_j$, that must be satisfied,
for some scalar functions $\alpha_i^j, \alpha_j^i$, for all $i,j=1,\dots,N$.
When a system is endowed with a coordinate system of Riemann invariants
it is convenient to normalize the eigenvectors $r_1,\dots, r_N$  of $Df$
so that there holds
\begin{equation}
\label{normalized-2-eigenvect}
\nabla W_i\cdot r_j \equiv\delta_{i,j}
\end{equation}
instead of $|r_i|\equiv 1$ as in~\eqref{normalized-eigenvect}. In turn, \eqref{normalized-2-eigenvect}
implies (cf.~\cite[Section~7.3]{Dafermos:Book}):
\begin{equation}
\label{commute-fields}
[r_i,r_j]\equiv 0\qquad \forall~i,j=1,\dots,N. 
\end{equation}
Throughout the following, we will write
$w_i(t,x)\doteq W_i\big(u(t,x)\big)$ to denote the $i$-th
Riemann coordinate of a solution $u=u(t,x)$ to~\eqref{conlaws},
and we shall adopt the norms $\|w\|_{L^1}\doteq \sum_i \|w_i\|_{L^1}$,
$\|w\|_{L^\infty}\doteq \max_i \|w_i\|_{L^\infty}$.
Notice that, because of~\eqref{normalized-2-eigenvect}, multiplying \eqref{conlaws} from the
left by $DW_i$, $i=1,\dots, N$,
we deduce that the system~\eqref{conlaws} is equivalent to the
system in diagonal form
\begin{equation}
\label{sys-diag}
(w_i)_t+ \lambda_i(w) (w_i)_x=0,
\qquad\quad i=1,\dots,N,
\end{equation}
within the context of classical solutions. 
%Clearly, 
%the canonical basis $\{e_1,\dots, e_N\}$
%of~$\R^N$
%%, which has all $0$ entries but the $i$-th one which is  set equal to $1$.
%provides a basis of eigenvectors of the diagonal matrix 
%$A(w)\doteq\text{diag}\{\lambda_1(w),\cdots,\lambda_N(w)\}$.
Thus, letting 
%$t\mapsto x_i^w(t,y)$ 
$t\mapsto x_i(t,y)$ denote the $i$-th characteristic of~\eqref{sys-diag}
starting at $y\in\R$, i.e. the solution of the Cauchy problem
\begin{equation}
\label{char-w-def}
\dot x = \lambda_i(w(t,x)),
\qquad\quad x(0)=y,
\end{equation}
it follows that each $i$-th
Riemann coordinate  $w_i(t,x)$
of a  classical solution to~\eqref{conlaws} remains constant along every $i$-th characteristic
of~\eqref{sys-diag}. On the other hand, differentiating~\eqref{sys-diag} w.r.t. $x$, 
% by the computations at~\eqref{eq-pi-qi}, \eqref{eq-pi-qi-coeff},
%by the same computations of the proof of Lemma~\ref{smooth-bounds}
and setting $q_i(t,x)\doteq (w_i(t,x))_x$,
we find that
\begin{equation}
\label{eq-qi}
(q_i)_t+\lambda_i(w(t,x))(q_i)_x
=-\sum_j \frac{\partial}{\partial w_j} \lambda_i(w(t,x)) \, q_j q_i.
\end{equation}
Observe that, by virtue of~\eqref{normalized-2-eigenvect}, the inverse map $u=W^{-1}(w)$
of $w=W(u)= (W_1(u), \dots, W_N(u))$
%letting $u(w)$ denote the inverse map of $W(u)= (W_1(u), \dots, W_N(u))$, by virtue of~\eqref{normalized-2-eigenvect} one has
satisfies
$\partial u(w)/ \partial w_i=r_i(u(w))$, for all $i=1,\dots,N$,
and so the chain rule yields
\begin{equation}
\label{eigenval-deriv-riem}
\frac{\partial}{\partial w_j} \lambda_i(w)\Bigg|_{w=W(u)}=
\nabla \lambda_i(u)\cdot r_j(u)\qquad\quad\forall~i,j\,.
\end{equation}
Next, set
\begin{align}
\label{alpha-2w-def1a}
\alpha_1'
&\doteq
\sup\bigg\{\big|\nabla\lambda_i(u)\cdot r_i(u)\big|\ ; \ u\in B_{\overline d}, \ i=1,\dots,N\bigg\},
\\
\label{alpha-2w-def1}
\alpha_1''
&\doteq
\sup\bigg\{
%\sum_{j}
\big|\nabla\lambda_i(u)\cdot r_j(u)\big|\ ; \ u\in B_{\overline d}, \ i,j=1,\dots,N\bigg\},
\end{align}
where $B_{\overline d}$ denotes as usual a ball centered in the origin and contained in the domain $\Omega$ of the flux
function~$f$.
Since $W(0)=0$, we may assume that 
\begin{equation}
\label{w-d-ball}
\Big\{W^{-1}(w)\, | \ |w_i|\leq \overline d'\Big\} \subset B_{\overline d},
\end{equation}
for some $\overline d'>0$.
Thus, because of~\eqref{eigenval-deriv-riem}, \eqref{alpha-2w-def1}, we have
\begin{equation}
\label{bound-grad-lambda-w}
\big|\nabla \lambda_i(w)\big|\leq \sqrt N\,\overline\alpha''_1\qquad\quad\forall~w\in \left[-\overline d',\, \overline d'\,\right]^N\,,
\qquad i=1,\dots,N\,.
\end{equation}
%
%setting
%%
%\begin{gather}
%%\label{alpha-2w-def1}
%\Gamma'_2(u)\doteq \sum_i\big|\nabla\lambda_i(u)\big|,
%\qquad\quad 
%\alpha'_2\doteq \Gamma'_2(0),
%\end{gather}
%%
%we may assume
%%
%\begin{equation}
%\label{alpha-2-prime-bound} 
%\sup\Big\{\Gamma'_2(u)\,; \, u\in\Omega\cap B_{\overline d}\,\Big\}
%< \frac{3\,\alpha'_2}{2}\,.
%\end{equation}
%%
Then, with the same arguments of the proof of Lemma~\ref{smooth-bounds},
we deduce the following sharper a-priori bounds on the Riemann coordinate expression of a 
classical solution of a rich system of conservation laws. 
\begin{lemma}
\label{smooth-bounds-rich}
%Let $f:\Omega \to \R^N$ be a $C^2$ map on an open, connected domain $\Omega\subset\R^n$ containinig the origin,
%and a
Assume that~\eqref{Cauchy-phi2} is a strictly hyperbolic
and rich system.
Given $T>0$, $0<d\leq \overline d'$,  $0<b\leq 1/{( 2 \alpha_1'' N \cdot T)}$,
with $\alpha_1''$
%, $\alpha_4$ 
as in~\eqref{alpha-2w-def1} $($interpreting ${1}/{\alpha_1''}\doteq \infty$ when $\alpha_1''=0$$)$,
% \eqref{alpha-1-2-def2},
consider a piecewise $C^1$ map $\phi:\R\to\Omega$ 
%with at most finitely many points of non differentiability,
that
%, setting $\phi^w\doteq W\circ \phi$,  
satisfies
\begin{equation}
\label{bound-indata-rich}
% \|\phi^w\|
 \|W\circ\phi\|_{L^{\infty}(\R,\Omega^w)}\leq d,
 \qquad\qquad
% \|\dot\phi^w\|
\Big\|\frac{d}{dx}(W\circ\phi)\Big\|_{L^{\infty}(\R,\Omega^w)}\leq b,
\end{equation}
where $\Omega^w\doteq \{w\in\R^N\ | \ w=W(u), \ u\in\Omega\}$.
 Then, the Cauchy problem~\eqref{Cauchy-phi2}-\eqref{Cauchy-phi-in2},
 admits a classical solution $u(t,x)$ on $[0,T]\times \R$ and, for every $t\leq T$, 
 letting $w(t,x)\doteq W(u(t,x))$, there hold
\begin{equation}
\label{est1-smooth-rich}
 \|w(t,\cdot)\|_{L^{\infty}(\R,\Omega^w)}\leq  d,
 \qquad\qquad
 \|w_x(t,\cdot)\|_{L^{\infty}(\R,\Omega^w)} \leq 2 \cdot b.
\end{equation}
\end{lemma}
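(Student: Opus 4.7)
The plan is to mimic the strategy of the proof of Lemma~\ref{smooth-bounds}, but to exploit the diagonal form~\eqref{sys-diag} guaranteed by richness in order to obtain sharper constants. Existence of the classical solution $u(t,x)$ on $[0,T]\times\R$ can be established by the same Cauchy-sequence/linearization procedure of~\cite[Section~4.2]{H:Book} that is referenced in the sketch of Lemma~\ref{smooth-bounds}, once the a-priori bounds~\eqref{est1-smooth-rich} are in hand. So I would focus on those two bounds, assuming throughout that $u(t,x)$ is already a classical solution.

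The $L^\infty$ bound on $w$ is essentially free: by~\eqref{sys-diag} each Riemann coordinate $w_i$ is constant along the $i$-th characteristic $t\mapsto x_i(t,y)$ defined by~\eqref{char-w-def}, so that
$$
\|w_i(t,\cdot)\|_{L^{\infty}(\R,\Omega^w)}=\|w_i(0,\cdot)\|_{L^{\infty}(\R,\Omega^w)}\leq d\qquad \forall\,t\in[0,T].
$$
Taking the max in $i$ yields the first inequality in~\eqref{est1-smooth-rich}; moreover, in view of~\eqref{w-d-ball}, this keeps $w(t,x)$ in the region where~\eqref{bound-grad-lambda-w} applies.

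For the derivative bound I would integrate equation~\eqref{eq-qi} along the $i$-th characteristic, getting
$$
\frac{d}{dt}\,q_i\big(t,x_i(t,y)\big)=-\sum_{j=1}^{N}\frac{\partial\lambda_i}{\partial w_j}\big(w(t,x_i(t,y))\big)\,q_j\,q_i.
$$
Using~\eqref{eigenval-deriv-riem} together with the definition~\eqref{alpha-2w-def1} of $\alpha_1''$ to bound $|\partial\lambda_i/\partial w_j|\leq\alpha_1''$, and setting $M(t)\doteq\max_i\|q_i(t,\cdot)\|_{L^{\infty}}$, one obtains
$$
\Big|\frac{d}{dt}q_i\big(t,x_i(t,y)\big)\Big|\leq N\alpha_1''\,M(t)^2.
$$
Since the map $y\mapsto x_i(t,y)$ is surjective onto $\R$ for classical solutions, taking the sup in $y$ and then in $i$ yields the Riccati-type integral inequality
$$
M(t)\leq M(0)+N\alpha_1''\int_0^t M(s)^2\,ds,
$$
and hence $M(t)\leq M(0)/(1-N\alpha_1''\,t\,M(0))$ as long as the denominator is positive. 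Because $M(0)\leq\|\tfrac{d}{dx}(W\!\circ\!\phi)\|_{L^{\infty}}\leq b$ and $b\leq 1/(2N\alpha_1''T)$, one has $N\alpha_1''T\,M(0)\leq 1/2$, whence $M(t)\leq 2M(0)\leq 2b$ for every $t\leq T$, which is the second inequality in~\eqref{est1-smooth-rich}.

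The essential gain over Lemma~\ref{smooth-bounds} — and the reason the rich hypothesis is used — is that one can close the nonlinear Gronwall-type loop directly on the maximum $M(t)=\max_i\|q_i\|_{L^{\infty}}$ rather than on the sum $Q(t)=\sum_i\|q_i\|_{L^{\infty}}$: in~\eqref{eq-qi} the only quadratic terms are of the form $\partial_{w_j}\lambda_i\cdot q_jq_i$ with $q_i$ factored out, so no structure constants involving $l_i^T Dr_j$ enter the estimate, and one avoids both the inflation by $\alpha_4 N$ in the $L^\infty$ bound of $w_x$ and the exponential prefactor $e^{\alpha_2/\alpha_3}$ in the $L^\infty$ bound of $w$. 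The only slightly delicate point — promoting the a-priori bounds to global-in-$[0,T]$ existence — is entirely parallel to the continuation argument invoked in Lemma~\ref{smooth-bounds}, so I do not anticipate a serious obstacle.
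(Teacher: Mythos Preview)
Your argument is correct and follows essentially the same route as the paper: the $L^\infty$ bound on $w$ comes from the invariance of each $w_i$ along the $i$-th characteristics, and the derivative bound is obtained by integrating~\eqref{eq-qi} along characteristics, closing the Riccati inequality on $\max_i\|q_i(t,\cdot)\|_{L^\infty}$ with the constant $N\alpha_1''$. Your observation that one can close on the max rather than the sum because $q_i$ factors out of the right-hand side of~\eqref{eq-qi} is exactly the structural point that yields the sharper constants here.
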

\begin{proof}
Proceeding as in the proof of Lemma~\ref{smooth-bounds}, 
it will be sufficient to show that, for any fixed time $T\leq 1/(2 \alpha_1'' N\cdot b)$, 
and for every initial data $\phi$ satisfying~\eqref{bound-indata-rich}, the estimates~\eqref{est1-smooth-rich} 
hold on $[0, T]$ for the Riemann coordinate expression $w(t,x)$ of
a classical solution of~\eqref{Cauchy-phi2}-\eqref{Cauchy-phi-in2}. 
Observe that the first inequality in~\eqref{est1-smooth-rich} is an immediate
consequence of the invariance of each $i$-th Riemann coordinate 
$w_i(t,x)$ along the $i$-th characteristics
of~\eqref{sys-diag}, and of the fact that $w(0,x)=W\circ \phi (x)$. Next,
defining $Q(t)\doteq \sup_i\|q_i(t,\cdot)\|_{L^\infty}$,
% as in~\eqref{Qdef},
and relying on~\eqref{eq-qi}, \eqref{eigenval-deriv-riem}, \eqref{alpha-2w-def1}, \eqref{w-d-ball},
%\eqref{alpha-2-prime-bound},
we derive as in~\eqref{est-Q}-\eqref{est-Q-2} the bound
\begin{equation}
\label{est-Q-4}
Q(t)  \leq \frac{Q(0)}{1-\alpha_1'' N t\,Q(0)}  \leq 2 \cdot Q(0)
\qquad\quad\forall~0\leq t < T,
\end{equation}
provided that $Q(0)\leq {1}/{(2 \alpha_1'' N\cdot T)}.$ Thus, since $Q(t)=\|w_x(t,\cdot)\|_{L^\infty}$
by  the definition of the $L^\infty$-norm, and because
$w_x(0,x)=\frac{d}{dx}(W\circ \phi)(x)$, 
if we assume $b\leq {1}/{(2 \alpha_1'' N \cdot T)}$
we recover from~\eqref{bound-indata-rich}, \eqref{est-Q-4}, 
 the second inequality in~\eqref{est1-smooth-rich}.
\end{proof}

Observe now that
as a consequence of~\eqref{normalized-2-eigenvect} we deduce also that
%in Riemann coordinates $w=(w_1,\dots w_N)$,
the rarefaction curve of the $i$-th family through $0$ can be parametrized in 
Riemann coordinates
as  $s\mapsto R^{\mathcal{R}}_i(s)\doteq s\, e_i$, $s\in \,]-\overline d', \overline d'[\,$, where $e_i$ denotes the $i$-th element
of the canonical basis of~$\R^N$.
%, which has all $0$ entries but the $i$-th one which is set equal to $1$.
Therefore, given $\beta\in\mathcal{PC}^1_{[d, b]}$, $d\leq \overline d'$, the map $\phi^\beta_i$ in~\eqref{j-initial wave}
takes the expression in Riemann coordinates:
\begin{equation}
\label{i-initial wave-rich}
W\circ\phi^\beta_i(x)\doteq \beta(x)\, e_i
\qquad x\in\R\,.
\end{equation}
Similarly, 
%every $i$-th Riemann component $\phi^\beta_i$ of 
the map $\phi^\beta$ in \eqref{phidef} defined in connection with
an $N$-tuple $\beta=(\beta_1,\dots,\beta_N)\in \mathcal{PC}^{1,N}_{[L,d,b,T]}$, $d\leq \overline d'$,
is given in Riemann coordinates by 
\begin{equation}
\label{phidef-rich}
W\circ \phi^\beta(x)\doteq \sum_{i=1}^N W\circ \phi^{\beta_i}_i(x)= \sum_{i=1}^N \beta_i(x)\, e_i
=(\beta_1(x),\dots,\beta_N(x)).
\end{equation}
%
%with $\phi^{\beta_i}_i$ defined as in~\eqref{i-initial wave-rich}.
%
%Notice that 
%%
%\begin{align}
%\|W\circ \phi^\beta\|_{L^\infty(\R,\Omega^w)} &=\sup_i \|W\circ\phi^{\beta_i}\|_{L^\infty(\R,\Omega^w)}\leq d,
%%
%\\
%\Big\|\frac{d}{dx}W\circ\phi^\beta\Big\|_{L^\infty(\R,\Omega^w)} &=\sup_i \Big\|\frac{d}{dx}W\circ\phi^{\beta_i}\Big\|_{L^\infty(\R,\Omega^w)}\leq b.
%\end{align}
%%
%Thus, by Lemma~\ref{smooth-bounds-rich} we know that
%the Cauchy problem~\eqref{Cauchy-phi-n}-\eqref{Cauchy-phi-in-n}
%%for~\eqref{Cauchy-phi2} with initial data $u(0,\cdot)=\phi^\beta$
%admits a classical solution $u(t,x)$ on a time interval $[0, T]$, $T\leq 1/(4\alpha_1''N\cdot b)$, that satisfies
% the a-priori bounds~\eqref{est1-smooth-rich}.
%Then, the $i$-th characteristic $t\mapsto x_i(t,y)$ starting at $y$, associated to $w(t,x)\doteq W(u(t,x))$, 
%i.e. the solution of~\eqref{char-w-def}, is defined 
%on the whole interval $[0, T]$, for every $y\in \R$, and we can define the inverse map
%$z_i(t,\cdot)\doteq x_i^{-1}(t,\cdot)$, for any $t\in [0,T]$.
%Hence, since  each $i$-th Riemann coordinate 
%$w_i(t,x)$ remains constant along the $i$-th characteristics, we may express $w_i(t,x)$ as
%%
%\begin{equation}
%%\label{solution-wi}
%w_i(t,x)=\beta_i(z_i(t,x))\qquad x\in\R.
%\end{equation}
%%
Notice that the supports of the simple waves $\phi_i^{\beta_i}$ may well overlap,
because we are not assuming here that $T$ satisfies
the bound~\eqref{T-assumption1}.
However, by~\eqref{phidef-rich} the structure of the solution in Riemann coordinates can be viewed as a superposition of almost decoupled simple waves 
since each $i$-th simple wave has zero $j$-th Riemann component for every $j\neq i$.
With similar arguments to the proof of Proposition~\ref{n-simple-waves-overlap} we then derive
the sharper a-priori bound on the size of the support of $w(t,\cdot)$ provided by the following
\begin{proposition}
\label{n-simple-waves-temple}
In the same setting and with the notations of Proposition~\ref{n-simple-waves-overlap}
and Lemma~\ref{smooth-bounds-rich},
assume that \eqref{Cauchy-phi-n} is a strictly hyperbolic and rich system.
%of Temple class, strictly hyperbolic, 
%and that all the characteristic families are genuinely nonlinear, i.e. that $\nabla \lambda_i(u)\cdot r_i(u)\neq 0$
%for all $u\in\Omega$, and for all $i=1,\dots, N$.
Given $L, T>0$, and $d, b >0$ satisfying
\begin{equation}
\label{db-assumption3}
0<d\leq \min\bigg\{\overline d',\, \frac{\Delta_{\wedge}\lambda}{2\alpha_1''\sqrt N}\bigg\},
\qquad\quad
0<b\leq  \min\bigg\{\frac{1}{2\alpha_1'\cdot T},\, \frac{\Delta_{\wedge}\lambda}{2 \alpha_1'' N\cdot L}\bigg\},
%\frac{1}{4\alpha_1'' N\cdot T}\bigg\},
\end{equation}
%
%$d< d_0$,  $0<b\leq 1/{(6\alpha'_2\cdot T)}$,
 with $\alpha_1', \alpha_1''$ as in~\eqref{alpha-2w-def1a}, \eqref{alpha-2w-def1},
 and $\Delta_{\wedge}\lambda$ as in~\eqref{T-assumption1}
 $($interpreting 
${1}/{\alpha_1'}\doteq \infty$
when $\alpha_1'=0$ and
${1}/{\alpha_1''}\doteq \infty$ when $\alpha_1''=0$$)$,
let $\mathcal{PC}^{1,N}_{[L,d,b,T]}$ be the class of maps introduced in~\eqref{Itauset-def2}-\eqref{PCNset},
and consider a map $\phi^\beta:\R\to \Omega$  as in~\eqref{phidef}, defined in connection with
an $N$-tuple $\beta=(\beta_1,\dots,\beta_N)\in \mathcal{PC}^{1,N}_{[L,d,b,T]}$.
 Then, the Cauchy problem~\eqref{Cauchy-phi-n}-\eqref{Cauchy-phi-in-n},
 admits a classical solution $u(t,x)$ on $[0,T]\times \R$.  Moreover,  letting $w(t,x)\doteq W(u(t,x))$, one has
\begin{equation}
\label{supp-w2}
\mathrm{Supp}(w(T,\cdot))\subseteq [-L,\, L],
\end{equation}
and, for every $t\leq T$, there hold:
%~\eqref{solution-wi} and 
%\eqref{est1-smooth-rich}.
  %
 \begin{equation}
 \label{est2-smooth-rich}
 \|w(t,\cdot)\|_{L^{\infty}(\R,\Omega^w)}\leq  d,
 \qquad\qquad
 \|w_x(t,\cdot)\|_{L^{\infty}(\R,\Omega^w)}\leq 4 b.
 \end{equation}
 \end{proposition}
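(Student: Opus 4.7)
The plan is to construct the classical solution by iterating Lemma~\ref{smooth-bounds-rich} over subintervals of length $L/\Delta_{\wedge}\lambda$, and then to establish the support bound \eqref{supp-w2} by exploiting the rigid transport of each Riemann coordinate along its own characteristic together with the fact that $\beta_i$ vanishes at the endpoints $\xi_i^\pm$.

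First I would check that the initial data $\phi^\beta$ reads, in Riemann coordinates, $(\beta_1(x),\ldots,\beta_N(x))$ by \eqref{phidef-rich}, so that $\|W\circ\phi^\beta\|_{L^\infty}\leq d$ and $\|(W\circ\phi^\beta)'\|_{L^\infty}\leq b$. The second bound in \eqref{db-assumption3}, $b\leq\Delta_{\wedge}\lambda/(2\alpha_1''NL)$, is precisely the condition required by Lemma~\ref{smooth-bounds-rich} to propagate a classical solution over one subinterval of length $L/\Delta_{\wedge}\lambda$. Partitioning $[0,T]$ into such subintervals and applying the lemma inductively yields the classical solution on $[0,T]\times\R$, and the $L^\infty$ bound $\|w(t,\cdot)\|_{L^\infty}\leq d$ is preserved since each $w_i$ is conserved along its $i$-th characteristic; the factor $4$ in the derivative estimate \eqref{est2-smooth-rich} absorbs the loss of a factor $2$ at each splitting step.

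The heart of the proof is the support bound \eqref{supp-w2}. From the diagonal system \eqref{sys-diag}, $w_i$ is constant along the $i$-th characteristic $t\mapsto x_i(t,y)$ solving $\dot x=\lambda_i(w(t,x))$, $x(0)=y$. Since the solution is classical, $i$-th characteristics do not cross, so $\mathrm{Supp}(w_i(T,\cdot))\subseteq[x_i(T,\xi_i^-),\,x_i(T,\xi_i^+)]$. The crucial observation is that, by continuity of $\beta_i$ together with $\mathrm{Supp}(\beta_i)\subseteq[\xi_i^-,\xi_i^+]$, one has $\beta_i(\xi_i^\pm)=0$ and hence $w_i(t,x_i(t,\xi_i^\pm))\equiv 0$ along the two boundary characteristics. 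Setting $\tilde w(t)\doteq w(t,x_i(t,\xi_i^\pm))$, only the off-diagonal components contribute:
\begin{equation*}
x_i(T,\xi_i^\pm)-\bigl(\xi_i^\pm+\lambda_i(0)\,T\bigr)=\sum_{j\neq i}\int_0^T\!\int_0^1 \partial_{w_j}\lambda_i(\tau\tilde w(t))\,\tilde w_j(t)\,d\tau\,dt.
\end{equation*}
Each integrand is bounded in absolute value by $\alpha_1''|\tilde w_j(t)|\leq\alpha_1''d$, and the measure of $\{t:\tilde w_j(t)\neq 0\}$ is controlled by the fact that the $j$-th support is an interval of length $\approx L$ and the $i$-th boundary characteristic crosses it at a relative speed bounded below by $\Delta_{\wedge}\lambda$, up to nonlinear corrections absorbed by \eqref{db-assumption3}; this yields an interaction time $\lesssim L/\Delta_{\wedge}\lambda$. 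Summing over $j\neq i$ and invoking the bound $d\leq\Delta_{\wedge}\lambda/(2\alpha_1''\sqrt N)$ produces a total drift of at most $L/2$, which combined with $\xi_i^\pm+\lambda_i(0)\,T\in[-L/2,L/2]$ gives $x_i(T,\xi_i^\pm)\in[-L,L]$, whence \eqref{supp-w2}.

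The main obstacle is making the interaction-time estimate rigorous without circular reasoning, since the support of $w_j(t,\cdot)$ is itself determined by $j$-th characteristics whose speeds depend on the full unknown $w$. I expect to handle this by a bootstrap argument, first using the coarse estimate $|\lambda_i(w)-\lambda_i(0)|\leq N\alpha_1'' d$ to deduce a rough support bound of the form $\xi_j^\pm+\lambda_j(0)\,t+O(L)$, and then plugging this rough bound into the refined crossing-time analysis above to close the estimate and reach the sharp conclusion \eqref{supp-w2}.
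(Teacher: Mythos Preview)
Your iterative construction of the classical solution has a genuine gap. Each application of Lemma~\ref{smooth-bounds-rich} on a subinterval of length $L/\Delta_{\wedge}\lambda$ loses a factor $2$ in the derivative bound, so after $k$ subintervals you obtain $\|w_x\|_{L^\infty}\leq 2^k b$, not $4b$; for $T>2L/\Delta_{\wedge}\lambda$ your scheme exceeds the claimed bound and, worse, the hypothesis of the lemma fails at the next step since $2b$ need not satisfy $2b\leq \Delta_{\wedge}\lambda/(2\alpha_1'' N L)$. Note too that the assumption on $b$ in~\eqref{db-assumption3} involves $\alpha_1'$ (the diagonal directional derivative) for the $1/T$ term, not $\alpha_1''$; your iteration never exploits this distinction.

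The paper avoids this by a two-phase construction that uses the geometry of the data. Because $T\geq L/\Delta_{\wedge}\lambda$ (treating the opposite case separately), the supports $[\xi_i^-,\xi_i^+]$ evolve disjointly on $[0,\,T-L/\Delta_{\wedge}\lambda]$, and on that phase the solution is an \emph{explicit} superposition of decoupled simple waves: the $i$-th wave sees only $w_i$, so only $\alpha_1'$ enters, and the single global bound $b\leq 1/(2\alpha_1' T)$ already yields $\|w_x\|_{L^\infty}\leq 2b$ on the whole first phase with no iteration. Lemma~\ref{smooth-bounds-rich} is then applied \emph{once} on the final interval of length $L/\Delta_{\wedge}\lambda$, producing the single additional factor $2$ and hence $4b$. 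This is also why the support estimate becomes easy: along the boundary characteristics $x_i(\cdot,\xi_i^\pm)$ one has $w\equiv 0$ up to the first interaction time $\tau_i^\pm\geq T-L/\Delta_{\wedge}\lambda$, so these characteristics are exactly straight with speed $\lambda_i(0)$ until then, and the drift over the remaining time $\leq L/\Delta_{\wedge}\lambda$ is bounded by $\sqrt{N}\,\alpha_1'' d\cdot L/\Delta_{\wedge}\lambda\leq L/2$ using~\eqref{db-assumption3}. Your crossing-time bootstrap is not needed once the disjoint-support phase is identified.
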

\noindent
\begin{proof}
We shall first assume that  $T\geq {L}/{\Delta_{\wedge}\lambda}$.
In this  case, as in the proof of Proposition~\ref{n-simple-waves-overlap},
we will show that a classical solution of the Cauchy problem~\eqref{Cauchy-phi-n}-\eqref{Cauchy-phi-in-n},
satisfying~\eqref{supp-w2}, \eqref{est2-smooth-rich}, 
is obtained on $[0, T-L/\Delta_{\wedge}\lambda]$ as a superposition of simple waves
supported on disjoint set. Next, we will prove that such a solution can be extended 
to the interval $[T-L/\Delta_{\wedge}\lambda,\,T]$ relying on Lemma~\ref{smooth-bounds-rich}.
Finally, we will discuss how to derive from Lemma~\ref{smooth-bounds-rich} the existence
of a classical solution of~\eqref{Cauchy-phi-n}-\eqref{Cauchy-phi-in-n} 
verifying~\eqref{supp-w2}, \eqref{est2-smooth-rich}  in the case where
$T< {L}/{\Delta_{\wedge}\lambda}$.

\noindent{\bf 1.} 
Given $\beta=(\beta_1,\dots,\beta_N)\in \mathcal{PC}^{1,N}_{[L,d,b,T]}$,
consider the functions $x^\flat_i(t,y)$, $i=1,\dots,N$, defined in~\eqref{i-char2}.
Observe that, relying on~\eqref{eigenval-deriv-riem}-\eqref{alpha-2w-def1a},
\eqref{db-assumption3}, by the same computations of Subsection~\ref{subsec:Simple waves}
we derive the inequality
\begin{equation}
\label{char-ric-est1}
\frac{\partial}{\partial y}x^\flat_i(t,y)\geq 1-\overline\alpha'_1 \, b\cdot t
\geq \frac{1}{2}
\qquad  \forall~t\in[0,T].
%0\leq t\leq 1/(2\overline\alpha'_1\cdot b)\,.
\end{equation}
It follows that the maps $y\mapsto x^\flat_i(t,y)$, $i=1,\dots,N$,  are
one-to-one in $\R$, for every fixed $t\in [0,T]$. Thus, we may define
the inverse map of $x^\flat_i(\cdot ,y)$ on  $\R$, and 
setting
\begin{equation}
\label{inverse-xi-flat}
z^\flat_i(t,\cdot)\doteq\ (x^\flat_i)^{\!^{-1}}\!(t,\cdot),
\qquad i=1,\dots,N,
\end{equation}
we define the the function
\begin{equation}
\label{solution-wflat}
w_i^\flat(t,x)\doteq 
\begin{cases}
{\beta_i}(z^\flat_i(t,x))\ \ &\text{if}\quad \ x\in [x^\flat_i(t,\xi_i^-),\, x^\flat_i(t,\xi_i^+)]\setminus
\displaystyle{\bigcup_{j\neq i}}[x^\flat_j(t,\xi_j^-),\, x^\flat_j(t,\xi_j^+)],
\\
\noalign{\smallskip}
\ \ 0&\text{otherwise,}
\end{cases}
 \ \ \ i=1,\dots,N,
\end{equation}
on $[0, T] \times\R$. As in the proof of Proposition~\ref{n-simple-waves-overlap},
notice that if we assume $T\geq {L}/{\Delta_{\wedge}\lambda}$ we derive 
\begin{equation}
\label{chardist2b}
  x^\flat_{i+1}(t,\xi_{i+1}^\pm) \leq x^\flat_i(t,\xi_i^\pm)
  \qquad\forall~t\in \bigg[0, T-\frac{L}{\Delta_{\wedge}\lambda}
  \bigg],
  \qquad i=1,\dots, N-1,
\end{equation}
so that one has
\begin{equation}
\label{solution-wi2}
w_i^\flat(t,x)=
\begin{cases}
{\beta_i}(z^\flat_i(t,x))\ \ &\text{if}\quad \ x\in [x^\flat_i(t,\xi_i^-),\, x^\flat_i(t,\xi_i^+)],
\\
\noalign{\smallskip}
\ \ 0&\text{otherwise,}
\end{cases}
\ \ \ \ \ i=1,\dots,N,
\end{equation}
for all $(t,x)\in [0, T-L/\Delta_{\wedge}\lambda]\times\R$.
Relying on~\eqref{phidef-rich}, by the same arguments of the proof of Proposition~\ref{n-simple-waves-overlap}
we then deduce that $w^\flat(t,x)$ is the Riemann coordinate expression
of a classical solution of~\eqref{Cauchy-phi-n}-\eqref{Cauchy-phi-in-n} 
on $[0, T-L/\Delta_{\wedge}\lambda]\times\mathbb{R}$. Moreover, 
recalling that by definition~\eqref{PCNset} one has $\beta_i\in\mathcal{PC}^1_{[d, b]}$,
for all $i=1,\dots,N$,
and relying on~\eqref{char-ric-est1}, we deduce from~\eqref{solution-wi2} that
\begin{equation}
\label{est3swn-rich}
\|w^\flat(t,\cdot)\|_{L^{\infty}(\R,\Omega^w)} 
=\max_i\|\beta_i\|_{L^{\infty}}\leq d,
\qquad\quad
\|w^\flat_x(t,\cdot)\|_{L^{\infty}(\R,\Omega^w)} 
\leq \max_i\Big\|\Big(\frac{\partial}{\partial y}x^\flat_i(t,\cdot)\Big)^{\!\!-1}\!\cdot\!\frac{d}{dx}\beta_i\Big\|_{L^{\infty}}\leq 2b,
\end{equation}
for all $t\in [0, T-L/\Delta_{\wedge}\lambda]$.
\smallskip

\noindent
{\bf 2.}
Relying on~\eqref{db-assumption3}, \eqref{est3swn-rich}, and applying Lemma~\ref{smooth-bounds-rich}, 
we deduce the existence of a classical solution of~\eqref{Cauchy-phi-n}
on the domain $[T-L/\Delta_{\wedge}\lambda,\, T]\times\R$, that assumes the 
initial data 
\begin{equation}
\phi(x)\doteq W^{-1}(w^\flat(T-L/\Delta_{\wedge}\lambda,\,x)), \qquad x\in\R,
\end{equation}
at time $t=T-L/\Delta_{\wedge}\lambda$.
Moreover, the Riemann coordinate expression $w^\sharp(t,x)$ 
of such a solution satisfies the estimates
\begin{equation}
 \label{est3-smooth-rich}
 \|w^\sharp(t,\cdot)\|_{L^{\infty}(\R,\Omega^w)}\leq  d,
 \qquad\qquad
 \|w^\sharp_x(t,\cdot)\|_{L^{\infty}(\R,\Omega^w)}\leq 4 \cdot b,
 \end{equation}
for all $t\in [T-L/\Delta_{\wedge}\lambda,\, T]$.
Therefore, the function defined by
\begin{equation}
\label{w-ufl-ush}
w(t,x)\doteq
\begin{cases}
w^\flat(t,x)\quad &\text{if}\qquad t\in [0,\, T-L/\Delta_{\wedge}\lambda],
\\
\noalign{\smallskip}
w^\sharp(t,x)\quad &\text{if}\qquad t\in \,]T-L/\Delta_{\wedge}\lambda, \, T],
\end{cases}
\end{equation}
provides the Riemann coordinate expression of a classical solution of~\eqref{Cauchy-phi-n}-\eqref{Cauchy-phi-in-n} 
on $[0,T]\times\R$ that, 
because of~\eqref{est3swn-rich}, \eqref{est3-smooth-rich},  
satisfies the bounds~\eqref{est2-smooth-rich} for all $t\in [0,T]$. 

In order to derive an estimate on the support of $w(T,\cdot)$, consider the $i$-th characteristic $t\mapsto x_i(t,y)$ starting at $y$
at time $t=0$, associated to $w(t,x)$, 
i.e. the solution of~\eqref{char-w-def}.
Since $w(t,x)$ is the Riemann coordinate expression of 
a classical solution of~\eqref{Cauchy-phi-n} on $[0,T]\times\R$, it follows that 
the map $y\mapsto x_i(t,y)$ is a one-to-one correspondence on~$\R$, for any $t\in [0,T]$.
%$ x_i(t,\cdot)$  is defined on the whole interval $[0, T]$, for every $y\in \R$, 
%and that we can define the inverse map
Hence, setting $z_i(t,\cdot)\doteq x_i^{-1}(t,\cdot)$,
and recalling that  each $i$-th Riemann coordinate 
$w_i(t,x)$ remains constant along the $i$-th characteristics, 
we may express $w_i(t,x)$ as
\begin{equation}
\label{solution-wi}
w_i(t,x)=\beta_i(z_i(t,x))\qquad \forall~t\in[0,T],\ \ x\in\R.
\end{equation}
Relying on~\eqref{solution-wi}, and because of~\eqref{PCNset},
we deduce that in order to prove~\eqref{supp-w2}
it will be sufficient to show that the $i$-th characteristic map $x_i(T,\cdot)$ satisfies 
\begin{equation}
\label{supp-w3}
[x_i(T,\xi_i^-),\, x_i(T,\xi^+)]\subseteq [-L,\, L],
\end{equation}
for every $i=1,\dots,N$. To this end, let $t\mapsto x_i(t;\tau,y)$ denote the $i$-th characteristic
starting at $y$ at time $t=\tau$, i.e. the solution of 
\begin{equation}
\label{char-w-def2}
\dot x = \lambda_i(w(t,x)),
\qquad\quad x(\tau)=y,
\end{equation}
and define the times $\tau_i^\pm$ and points $y_i^\pm$ as in~\eqref{tau-i-def}.
Then, recalling~\eqref{i-char2}, thanks to~\eqref{Itauset-def2}, \eqref{char-i-T}, \eqref{bound-taui},
\eqref{uchar-i-T},
%\eqref{eigenval-deriv-riem}, \eqref{alpha-2w-def1}, \eqref{w-d-ball}, 
\eqref{bound-grad-lambda-w}, \eqref{est2-smooth-rich},
\eqref{char-w-def2}, and because of~\eqref{db-assumption3}, we find
\begin{equation}
\label{supp-est5}
\begin{aligned}
x_i(T, \xi_i^\pm)
&\leq y_i^\pm + \bigg(\lambda_i(0)+\sqrt N\,\overline\alpha''_1 \cdot d
\bigg)\cdot (T-\tau_i^\pm)
\\
\noalign{\smallskip}
&= x^\flat_i(T,\,\xi_i^\pm) +\sqrt N\,\overline\alpha''_1 \cdot d
\cdot  (T-\tau_i^\pm)
\\
\noalign{\smallskip}
&\leq \frac{L}{2}+\sqrt N\,\overline\alpha''_1 \cdot d\cdot  \frac{L}{\Delta_{\wedge}\lambda}<L\,.
\end{aligned}
\end{equation}
With similar arguments we derive $x_i(T, \xi_i^\pm)>-L$, which together with~\eqref{supp-est5}, yields~\eqref{supp-w3}.
This completes the proof of the proposition in the case where $T\geq {L}/{\Delta_{\wedge}\lambda}$.
\smallskip

\noindent
{\bf 3.}
Assume $T< {L}/{\Delta_{\wedge}\lambda}$, and observe that by~\eqref{db-assumption3}
one has $b\leq 1/{(2 \alpha_1'' N \cdot T)}$.
Then, applying Lemma~\ref{smooth-bounds-rich}, 
we deduce the existence of a classical solution of~\eqref{Cauchy-phi-n}-\eqref{Cauchy-phi-in-n} 
on $[0,T]\times\R$ that
satisfies the bounds~\eqref{est2-smooth-rich} for all $t\in [0,T]$. Letting $w(t,x)$
denote the Riemann coordinate expression of such a solution, by the same arguments above
we can show that~\eqref{solution-wi}, \eqref{supp-w3} hold, 
which, together with~\eqref{PCNset}, yield~\eqref{supp-w2}, thus concluding the proof of the proposition.
\end{proof}
\begin{remark} 
\label{uniqueness-temple}
The same conclusion of Remark~\ref{uniqueness} holds if we consider a 
%genuinely nonlinear Temple class 
rich system
that generates a
semigroup of entropy weak solutions $(S_t)_{t\geq 0}$ with a domain $\mathcal{D}$ as in~\eqref{domtemple},
and we assume that 
%$\phi^\beta_i\in L^1(\R,\Omega)$ and
%$w_i(u)\in [a_i, b_i]$ for all $u\in B_d$, $i=1,\dots, N$.
%
\begin{equation}
\label{domain-temple}
]\overline d', \overline d'[\,\subset  [a_i, b_i] \qquad\quad\forall~i=1,\dots, N.
\end{equation}
In fact, under this assumption
it clearly follows 
%one can easily verify 
that, for every given $\beta=(\beta_1,\dots,\beta_N)\in \mathcal{PC}^{1,N}_{[L,d,b,T]}$,
$d< \overline d'$,
 the map $\phi^\beta$ defined in~\eqref{phidef} belongs to $\mathcal{D}$,
and thus, relying on~\cite[Section 5.3]{Dafermos:Book}, we deduce that
%Thus we deduce that  
%if the map $\phi^\beta$ defined in~\eqref{phidef}
%satisfies $\|\frac{d}{dx}\phi^\beta\|_{L^1(\R,\Omega)}\leq \delta_0$, 
%it follows that 
the classical solution $u(t,\cdot)$ of the Cauchy problem~\eqref{Cauchy-phi-n}-\eqref{Cauchy-phi-in-n}
provided by Proposition~\ref{n-simple-waves-temple}
coincides with 
$S_t \phi^\beta$.
\end{remark}

\section{Lower compactness estimates for conservation laws}
\label{sec:lower-est}

\subsection{A controllability result}
\label{subsec-control}

For arbitrary constants $L,  b>0$,  $0<M< \overline d$ \,
(\,$\overline d$ being the 
%half length of the interval where the rarefaction curves $R_i$, $i=1,\dots,N$, are defined,
%which coincides with the 
radius of the ball contained in the domain of the flux function where condition~\eqref{lambdadef} is verified),
and $T>0$ satisfying~\eqref{T-assumption1},
%$0<M< \min\{d_0, L/(3N\alpha_0 T)\}$,  
%with~$\alpha_0$ as in~\eqref{lambdadef},
recalling the definitions~\eqref{Itauset-def2}, \eqref{PCNset}, \eqref{phidef},
let us consider the set
\begin{equation}
\label{attainable set-T}
\mathcal{A}_{[L,M,b, T]}
\doteq
\Big\{\psi\in C(\R,\Omega)\,\big|\,
\psi(x)=\phi^\beta(-x)\ \ \forall~x\in\R,\ \ \ \text{for some}\ \ \ \beta=(\beta_1,\dots,\beta_N)\in \mathcal{PC}^{1,N}_{[L,M,b,T]} \
%\mathrm{Supp}(\beta_i)\subset I, \ 
%\|\psi\|_{L^1}\leq m
%\ \|\psi\|_{L^\infty}\leq M
\Big\}.
\end{equation}
Notice that, because of~\eqref{Itauset-def2}, 
%one has $|\mathrm{Supp}(\psi)| \leq NL$, for all $\psi\in \mathcal{A}_{[L,m,M,b, T]}$.
every map $\psi\in \mathcal{A}_{[L,m,M,b, T]}$ is supported on $N$ disjoint intervals $[\xi_i^-, \xi_i^+]$,
$i=1,\dots, N$,
of length $L$.
The next result shows that the elements of such a set can be obtained as the values
$S_T\overline u$ at a fixed time~T of the semigroup generated by~\eqref{conlaws},
for initial data $\overline u$ varying in a set of the form~\eqref{Cclass}.
\begin{proposition}
\label{controllability}
Let $f:\Omega \to \R^N$ be a $C^2$ map on an open, connected domain $\Omega\subset\R^N$ containing the origin,
and assume that the system~\eqref{conlaws} is strictly hyperbolic.
Let $(S_t)_{\geq 0}$ be the semigroup of entropy weak solutions generated by~\eqref{conlaws} 
defined on a domain $\mathcal{D}_0$   satisfying~\eqref{domainsgr}.
Then, 
%with the same notations of Theorem~\ref{mainthm}, 
given any $L, m, M, T>0$,
 and setting 
 %${\widehat L}_T\doteq L + (\lambda_N(0)-\lambda_1(0))\cdot (T/2)$,
%$m_{h,T}\doteq L_T\cdot h$, 
%
\begin{equation}
\label{IL-Ltilde-def}
%I_L\doteq [-L, L], 
%\qquad\quad
\widetilde L\doteq \min\left\{\frac{L}{(1+\alpha_5)},\ T\cdot\Delta_{\wedge}\lambda\right\},
\end{equation}
$($$\Delta_{\wedge}\lambda,\, \alpha_5$ being the constants in~\eqref{T-assumption1}, \eqref{alfa4def}$)$, 
for every
\begin{equation}
\label{bh-bound}
\begin{gathered}
0\leq b\leq  \min\bigg\lbrace{
\frac{1}{2\alpha_1\cdot T},\, 
\frac{\Delta_{\wedge}\lambda}{4 \alpha_3\alpha_4 N^2\cdot L},\, 
%\frac{1}{9\alpha_3\alpha_4 N^2 \cdot T},\, 
%\frac{\delta_0}{N\widetilde L},\, 
\frac{\delta_0}{8\alpha_4 N L}\bigg\rbrace},
\\
\noalign{\medskip}
0\leq h \leq  
\min\bigg\lbrace{\frac{\overline d}{2 \alpha_4 N\, e^{\alpha_2/\alpha_3}}, \, 
%\frac{L}{3N\alpha_0T}, \, 
\frac{\Delta_{\wedge}\lambda}{4\alpha_1\,\alpha_4 N\, e^{\alpha_2/\alpha_3}}, \, 
\frac{M}{2\alpha_4 N\, e^{\alpha_2/\alpha_3} },\, \frac{m}{2L}
\bigg\rbrace}
\end{gathered}
\end{equation}
$($$\alpha_l$, $l=1,2,3,4$, being the constants in~\eqref{lambdadef}, \eqref{alpha-1-2-def2},
interpreting $1/\alpha_1\doteq \infty$
%${1}/{(2\alpha_1\cdot T)}\doteq \infty$, 
%${\Delta_{\wedge}\lambda}/{(6\alpha_1\,\alpha_4 N\, e^{\alpha_2/\alpha_3})}\doteq \infty$,
when $\alpha_1=0$
$)$, there holds
\begin{equation}
\label{att-incl}
\mathcal{A}_{[\widetilde L,h,b,T]}\subset
%\mathcal{A}_{[L,2{\widehat L}_Th,h,b,T]}\subset 
S_T\big(\mathcal{L}_{[I_L,m,M]}\cap{\mathcal{D}_0}\big)\,,
\qquad\quad I_L\doteq [-L, L],
\end{equation}
where $\mathcal{A}_{[\widetilde L,h,b,T]}$, $\mathcal{L}_{[I_L,m,M]}$
denote the sets defined as in~\eqref{attainable set-T}, \eqref{Cclass}, respectively.
%%
%\begin{equation}
%\label{IT-def}
%I_T\doteq [l_1+\lambda^\text{max} \cdot T,\, l_2+\lambda^\text{max} \cdot T].
%\end{equation}
%
\end{proposition}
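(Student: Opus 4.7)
The strategy is to realize $\psi$ as $S_T \overline u$ via a time-reversal construction based on spatial reflection, which is available here because the relevant solutions are classical on $[0,T]$ (hence shock-free and reversible). Given $\psi \in \mathcal{A}_{[\widetilde L, h, b, T]}$, I would first write $\psi(x) = \phi^\beta(-x)$ for some $\beta = (\beta_1, \dots, \beta_N) \in \mathcal{PC}^{1,N}_{[\widetilde L, h, b, T]}$, and apply Proposition~\ref{n-simple-waves-overlap} with $\widetilde L$ and $h$ playing the roles of $L$ and $d$ there. The choice $\widetilde L \leq T \cdot \Delta_{\wedge}\lambda$ from \eqref{IL-Ltilde-def} guarantees the time hypothesis \eqref{T-assumption1}, and the bounds \eqref{bh-bound} on $b, h$ subsume those of \eqref{db-assumption2}. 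This produces a classical solution $u : [0,T] \times \R \to \Omega$ of $u_t + f(u)_x = 0$ with $u(0, \cdot) = \phi^\beta$, satisfying $\mathrm{Supp}(u(T,\cdot)) \subseteq [-\widetilde L(1+\alpha_5), \widetilde L(1+\alpha_5)] \subseteq [-L, L]$ together with the a priori estimates \eqref{est1swn}.

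Next I would define the candidate initial datum $\overline u(x) \doteq u(T, -x)$ and verify the four membership conditions for $\overline u \in \mathcal{L}_{[I_L, m, M]} \cap \mathcal{D}_0$. The support inclusion in $I_L = [-L, L]$ follows from the support estimate above, since reflection preserves support size and the interval is symmetric. The $L^\infty$ bound $\|\overline u\|_{L^\infty} = \|u(T, \cdot)\|_{L^\infty} \leq 2\alpha_4 N e^{\alpha_2/\alpha_3} h \leq M$ is guaranteed by the third constraint on $h$ in \eqref{bh-bound}. The $L^1$ bound comes from combining the $L^\infty$ estimate with the support length $2L$, using $h \leq m/(2L)$. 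Finally, the total variation bound $\mathrm{Tot.Var.}(\overline u) \leq 2L \cdot\|u_x(T,\cdot)\|_{L^\infty} \leq 8\alpha_4 N L b \leq \delta_0$ follows from the last constraint on $b$, placing $\overline u$ in $\mathcal{D}_0$ via \eqref{domainsgr}.

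The heart of the argument is then the reflection-reversal. Set $\widetilde u(t, x) \doteq u(T-t, -x)$ on $[0,T] \times \R$. A direct chain-rule computation gives
\[
\widetilde u_t(t,x) + Df(\widetilde u(t,x))\cdot \widetilde u_x(t,x) = -u_t(T-t,-x) - Df(u(T-t,-x))\cdot u_x(T-t,-x) = 0,
\]
since the two minus signs coming from $t \mapsto T-t$ and $x \mapsto -x$ recombine to restore the original equation. By construction $\widetilde u(0, \cdot) = \overline u$ and $\widetilde u(T, x) = u(0, -x) = \phi^\beta(-x) = \psi(x)$, so $\widetilde u$ is a Lipschitz classical solution of \eqref{conlaws} issued from $\overline u \in \mathcal{D}_0$. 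Invoking Remark~\ref{uniqueness} identifies $\widetilde u$ with the semigroup trajectory, yielding $S_T \overline u = \widetilde u(T, \cdot) = \psi$ and hence the inclusion \eqref{att-incl}.

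The main obstacle is not conceptual but combinatorial: one must verify that the single system of bounds \eqref{bh-bound} on $b$ and $h$ is simultaneously sufficient to invoke Proposition~\ref{n-simple-waves-overlap} with $\widetilde L$ in place of $L$ \emph{and} to secure all four size conditions defining $\mathcal{L}_{[I_L, m, M]} \cap \mathcal{D}_0$. The amplification factors $2\alpha_4 N e^{\alpha_2/\alpha_3}$ and $4\alpha_4 N$ from the a priori bounds must be carefully tracked through the reflection, but they propagate linearly; the two-sided definition $\widetilde L = \min\{L/(1+\alpha_5),\, T\Delta_{\wedge}\lambda\}$ in \eqref{IL-Ltilde-def} precisely allocates the two geometric roles (controlling the terminal support of $u(T,\cdot)$ via $\alpha_5$, and enforcing the simple-wave time hypothesis).
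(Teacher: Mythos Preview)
Your proposal is correct and follows essentially the same approach as the paper: both arguments write $\psi(x)=\phi^\beta(-x)$, apply Proposition~\ref{n-simple-waves-overlap} with $\widetilde L$ in place of $L$ to build a forward classical solution, then time-reverse via $(t,x)\mapsto(T-t,-x)$, verify that the reflected terminal profile lies in $\mathcal{L}_{[I_L,m,M]}\cap\mathcal{D}_0$ using \eqref{supp-w}, \eqref{est1swn} and the constraints \eqref{bh-bound}, and invoke Remark~\ref{uniqueness} to identify the reversed trajectory with the semigroup. The only differences are notational (the paper calls the forward solution $\omega$ and the reversed one $u$) and the order in which the membership checks are presented.
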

\begin{proof}
Following the same strategy adopted in~\cite{AOK}, 
we will show that any element $\psi\in\mathcal{A}_{[\widetilde L,h,b,T]}$ 
can be obtained as the value at time $T$ of a 
classical solution to~\eqref{conlaws} by reversing the direction of time,
and constructing a backward solution to~\eqref{conlaws} 
that starts at time $T$ from $\psi$.
Namely, given
\begin{equation}
\label{final-data}
\psi\in \mathcal{A}_{[\widetilde L,h,b,T]}\,,
%\mathcal{A}_{[L,2{\widehat L}_Th,h,b,T]}\,,
\end{equation}
by definition~\eqref{attainable set-T} there will be an $N$-tuple of maps 
$\beta=(\beta_1,\dots,\beta_N)\in \mathcal{PC}^{1,N}_{[\widetilde L,h,b,T]}$,
such that letting $\phi^\beta$ be the function defined in~\eqref{phidef}, one has $\psi(x)=\phi^\beta(-x)$,
for all $x$. Notice that, by~\eqref{IL-Ltilde-def}, one has
\begin{equation}
\label{T-bound}
T\geq\frac{\widetilde L}{\Delta_{\wedge}\lambda},
\end{equation}
as in~\eqref{T-assumption1},
while~\eqref{bh-bound} imply that $h, b$ satisfy the bounds~\eqref{db-assumption2} on $d,b$. 
Then, set 
\begin{equation}
\label{om-indata}
\omega_0(x)\doteq \psi(-x)=\phi^\beta(x)\qquad\forall~x\in\R,
\end{equation}
and let $\omega : [0,T]\times \R \to \Omega$ denote the classical solution
of the Cauchy problem~\eqref{Cauchy-phi-n}-\eqref{Cauchy-phi-in-n}, provided by Proposition~\ref{n-simple-waves-overlap}.
%Observe that \eqref{Itauset-def2}, \eqref{PCNset} imply $|\mathrm{Supp}(\dot \beta_i)|\leq\widetilde L$,
%for all $i$.
%%\eqref{phidef} imply $|\mathrm{Supp}(\omega_0)|\leq N\widetilde L$.
%%< 2{\widehat L}_T$.
%Hence, by definition~\eqref{phidef},
%relying on~\eqref{normalized-eigenvect},  \eqref{dphi},  \eqref{bh-bound},
%and because of the bound on $\dot\beta$ in~\eqref{PCset}, 
%one has
%%
%\begin{equation}
%\label{tvbound-om}
%\mathrm{Tot. var. }(\phi^\beta)\leq
%\sum_{i=1}^N \|\dot \phi_i^{\beta_i}\|_{L^1}
%= \sum_{i=1}^N \|{\dot \beta_i}\|_{L^1}
%\leq N\widetilde L  \cdot b <\delta_0.
%%2{\widehat L}_T \cdot b <\delta_0.
%\end{equation}
%%
%Therefore, recalling~\eqref{domainsgr} we deduce that $\omega_0=\phi^\beta$
%belongs to the domain $\mathcal{D}_0$ of the semigroup $(S_t)_{t\geq 0}$ of solutions
%generated by~\eqref{conlaws}.  
%Thus, by Remark~\ref{uniqueness} we have $\omega(t,\cdot)=S_t \omega_0$,
%for all $t\in [0,T]$.
%
Next, consider the function
\begin{equation}\label{EqDefu}
u(t,x)\doteq \omega(T-t,-x), \qquad  (t,x)\in [0,T] \times \R.
\end{equation}
Observe that $u(t,x)$ is a classical solution of~\eqref{conlaws} with initial data $u(0,\cdot)=\omega(T,-\cdot)$
that, by~\eqref{om-indata}, satisfies
\begin{equation}
\label{u-psi}
u(T,\cdot)=\psi.
\end{equation}
Moreover, by~\eqref{supp-w}, \eqref{IL-Ltilde-def} 
we have $|\text{Supp}(\omega(T,-\cdot))|=|\text{Supp}(\omega(T,\cdot))|\leq 2(1+\alpha_5)\widetilde L\leq 2L$.
Therefore, relying on the second estimate in~\eqref{est1swn} and on~\eqref{bh-bound},
%~\eqref{est2swn}, 
we derive
%as in~\eqref{tvbound-om}, the estimate
%
\begin{equation}
\begin{aligned}
\mathrm{Tot. var. }(\omega(T,-\cdot))
&\leq
\|\omega_x(T,\cdot)\|_{L^1(\mathbb{R},\Omega)}
%\sum_{i=1}^N \|\dot \phi_i^{\beta_i}\|_{L^1}
%
\\
\noalign{\smallskip}
&\leq \|\omega_x(T,\cdot)\|_{L^\infty(\mathbb{R},\Omega)}\cdot |\text{Supp}(\omega(T,\cdot))|
\\
\noalign{\smallskip}
&\leq 4\alpha_4 N\cdot b\cdot 2L
\leq \delta_0.
\end{aligned}
\end{equation}
Thus, by~\eqref{domainsgr} we deduce 
%as above 
that $u(0,\cdot)=\omega(T,-\cdot)\in \mathcal{D}_0$,
and hence, recalling 
%again 
Remark~\ref{uniqueness}, we have $u(t,\cdot)=S_t \omega(T,-\cdot)$,
for all $t\in [0,T]$. Because of~\eqref{u-psi}, this implies in particular that $\psi=S_T \omega(T,-\cdot)$.
To conclude the proof of
\begin{equation}
\label{psi-in-attset}
\psi\in S_T\big(\mathcal{L}_{[I_L,m,M]}\cap{\mathcal{D}_0}\big)
\end{equation}
it thus remains to show that
\begin{equation}
\label{psi-in-Lset}
\omega(T,-\cdot)\in \mathcal{L}_{[I_L,m,M]}.
\end{equation}
Since $\omega$ is the classical solution
of~\eqref{Cauchy-phi-n}-\eqref{Cauchy-phi-in-n} provided by Proposition~\ref{n-simple-waves-overlap},
recalling that $\psi(-\cdot)=\phi^\beta$, 
$\beta\in \mathcal{PC}^{1,N}_{[\widetilde L,h,b,T]}$,
and 
relying on~\eqref{supp-w}, \eqref{est1swn}, \eqref{IL-Ltilde-def},
%-\eqref{est2swn}, 
\eqref{bh-bound}, \eqref{final-data},
we deduce that
\begin{equation}
\label{supp-norm-omT}
\begin{gathered}
\mathrm{Supp}(\omega(T,-\cdot))\subset [-L, L],
\\
\noalign{\smallskip}
\|\omega(T,-\cdot)\|_{L^\infty(\mathbb{R},\Omega)}\leq
2\alpha_4 N\, e^{\frac{\alpha_2}{\alpha_3}}\cdot h \leq M,
\\
\noalign{\smallskip}
\|\omega(T,-\cdot)\|_{L^1(\mathbb{R},\Omega)} \leq 
%2 \cdot\|\phi^\beta\|_{L^1(\R,\Omega)}
%= 2 \cdot\|\psi\|_{L^1(\R,\Omega)} \leq 
2L h\leq m.
%4{\widehat L}_Th\leq m,
\end{gathered}
\end{equation}
Therefore, the inclusion~\eqref{psi-in-Lset} is verified because of~\eqref{supp-norm-omT}, which completes
the proof of the proposition.
\end{proof}
%
%\smallskip
%
\begin{remark} 
\label{rem-scalar-control}
When $N=1$, under the same assumptions as Proposition~\ref{controllability}, assume also that $f'(0)=0$
(possibly performing a space and flux transformation).
Then, relying on Lemma~\ref{i-simple-wave} $($where we may reach the same conclusion assuming that $b\leq 3/(4 \overline c\cdot T)$,
with $\overline c\doteq \sup\big\{|f''(u)|\,; \ |u|\leq \overline d\big\}$$)$, we can show that the following holds.
Given any $L, m, M, T>0$, for every
\begin{equation*}
0\leq b\leq \frac{3}{4 \overline c\cdot T},\qquad\quad
0\leq h \leq 
\min\bigg\lbrace{\overline d',\, M,\, \frac{m}{2L}
\bigg\rbrace},
\end{equation*}
one has
\begin{equation}
\label{att-incl-scakar}
\mathcal{A}_{[L,h,b,T]}\subset
S_T\big(\mathcal{L}_{[I_L,m,M]}\big)\,,
\qquad\quad I_L\doteq [-L/2, L/2],
\end{equation}
where $\mathcal{A}_{[L,h,b,T]}$, $\mathcal{L}_{[I_L,m,M]}$
denote sets defined as in~\eqref{attainable set-T}, \eqref{Cclass}, respectively.

\end{remark}
%

%for which rarefaction curves and Hugoniot curves coincides.
We shall now extend the previous controllability results to class 
of functions with possibly unbounded total variation in the case of hyperbolic
systems of conservation laws of Temple class.
We recall that  (see~\cite{Dafermos:Book,serre,temple}):
\begin{definition}
\label{def-temple}
A system of conservation laws~\eqref{conlaws} is called of {\it Temple class}
if:
\begin{itemize}
\item[-] it is a rich system, i.e. 
if it is endowed with a coordinates system $w=(w_1,\ldots,w_n)$  of Riemann invariants $w_i=W_i(u)$
associated to each characteristic field $r_i$;
\item[-] the level sets \textrm{$\big\{u\in \Omega;~W_i(u)=constant\big\}$}
of every Riemann invariant
are hyperplanes. 
\end{itemize}
\end{definition}
\noindent
We shall 
%also 
assume that $W(0)=0$ and that as $w$ ranges within the product set 
$\Pi \doteq [a_1, b_1]\times\cdots\times [a_N,b_N]$, the corresponding state $u=W^{-1}(w)$ 
remains inside the domain $\Omega$ of the flux function $f$.

We also recall that a characteristic field $r_i$ of a system~\eqref{conlaws} is said to be {\it genuinely nonlinear} (GNL) in the sense of Lax
if $\nabla \lambda_i(u)\cdot r_i(u)\neq 0$ for all $u\in\Omega$, while we say that 
$r_i$ is {\it linearly degenerate} (LD) if $\nabla \lambda_i(u)\cdot r_i(u)\equiv 0$ for all $u\in\Omega$.
%By choosing a suitable orientation, and b
%Because of~\eqref{eigenval-deriv-riem}, 
%we may assume that 
%a characteristic field $r_i$ of a Temple system is genuinely nonlinear if $\frac{d}{d w_i}\lambda_i(w)>0$
%for all $w=W(u), u\in\Omega$
%(possibly modifying the orientation of $r_i$ 
%and the sign of $w_i$ so that~\eqref{normalized-2-eigenvect} remains valid).
\medskip

As observed in the introduction, the results in~\cite{bianc}, \cite{brgoa1} show
that a Temple system with GNL or LD characteristic families admits a continuous
semigroup of entropy weak solutions $S:  [0,\infty[ \times \mathcal{D}\to \mathcal{D}\}$
%$(S_t)_{t\geq 0}$ 
defined on domains $\mathcal{D}$ as in~\eqref{domtemple} of functions having
possibly unbounded variation.
We shall adopt the notation
$S^w_t \overline w\doteq W(u(t,\cdot))$ for
the Riemann coordinates expression of the
solution of~\eqref{conlaws},\eqref{indata}, with $\overline u\doteq W^{-1}\circ\overline w$.
%and call $\Pi \doteq [a_1, b_1]\times\cdots\times [a_n,b_n]$ the domain of Riemann values of functions in $\mathcal{D}$.
Therefore, relying on the sharper a-priori bounds on the classical solutions of a rich system
provided by Proposition~\ref{n-simple-waves-temple}, 
and setting
\begin{equation}
\label{attainable set-T-w}
\mathcal{A}^w_{[L,M,b, T]}
\doteq
\Big\{\psi\in C(\R,\Pi)\,\big|\,
\psi(x)=\beta(-x)\ \ \forall~x\in\R,\ \ \ \text{for some}\ \ \ \beta=(\beta_1,\dots,\beta_N)\in \mathcal{PC}^{1,N}_{[L,M,b,T]} \
\Big\}.
\end{equation}
we establish the following
\begin{proposition}
\label{controllability-temple}
In the same setting of Proposition~\ref{controllability},
%and with the same notations of Theorem~\ref{mainthm},  
assume that~\eqref{conlaws} is a strictly hyperbolic
system of Temple class, 
and that all characteristic families are genuinely nonlinear
or linearly degenerate. 
Let $(S_t)_{\geq 0}$ be the semigroup of entropy weak solutions generated by~\eqref{conlaws} 
defined on a domain $\mathcal{D}$  as in~\eqref{domtemple},
and assume that \eqref{domain-temple} holds.
Then,  given any $L, m, M, T>0$,
for every $b, h$ satisfying
\begin{equation}
\label{bh-bound-temple}
\begin{gathered}
0\leq b\leq \min \bigg\{\frac{1}{2\alpha_1'\cdot T},\, \frac{\Delta_{\wedge}\lambda}{2 \alpha_1'' N\cdot L}\bigg\},
%\frac{1}{4\alpha_1'' N\cdot T},
\qquad\quad
0\leq h \leq 
\min\bigg\lbrace{\overline d',\, \frac{\Delta_{\wedge}\lambda}{2\alpha_1''\sqrt N}, \, 
M,\, \frac{m}{2L}
\bigg\rbrace},
\end{gathered}
\end{equation}
$($$\alpha_1', \alpha_1''$  being the constants in~\eqref{alpha-2w-def1a}, \eqref{alpha-2w-def1}
 and $\Delta_{\wedge}\lambda$ as in~\eqref{T-assumption1},
interpreting $1/\alpha_1'\doteq \infty$
%${L}/{(2\alpha_1'\cdot T)}\doteq \infty$
when $\alpha_1'=0$ and $1/\alpha_1'' \doteq \infty$ 
%${1}/{(4\alpha_1'' N\cdot T)}\doteq \infty$ 
when $\alpha_1''=0$$)$ there holds
\begin{equation}
\label{att-incl-temple}
\mathcal{A}^w_{[L,h,b,T]}\subset
%\mathcal{A}_{[L,2{\widehat L}_Th,h,b,T]}\subset 
S^w_T\big(\mathcal{L}^w_{[I_L,m,M]}\big)\,,
\qquad\quad 
I_L\doteq [-L, L],
\end{equation}
where the sets $\mathcal{A}^w_{[L,h,b,T]}$, $\mathcal{L}^w_{[I_L,m,M]}$, are defined as in~\eqref{attainable set-T-w}
and in~\eqref{Cclass-w}, respectively.
%
%\begin{equation}
%\label{IT-def}
%I_T\doteq [l_1+\lambda^\text{max} \cdot T,\, l_2+\lambda^\text{max} \cdot T].
%\end{equation}
%
\end{proposition}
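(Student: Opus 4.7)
The plan is to mirror the proof of Proposition~\ref{controllability}, but to substitute Proposition~\ref{n-simple-waves-temple} for Proposition~\ref{n-simple-waves-overlap}, and to invoke Remark~\ref{uniqueness-temple} instead of Remark~\ref{uniqueness}. The Temple/rich structure buys two things: first, the sharper support bound~\eqref{supp-w2} lets us conclude that the reversed initial data is supported in $[-L,L]=I_L$ directly (no $(1+\alpha_5)$ factor, so there is no need to introduce an auxiliary $\widetilde L$); second, the $L^\infty$ estimate~\eqref{est2-smooth-rich} on the Riemann coordinates depends only on the initial bound $d$ (with factor $1$, not $2\alpha_4 N e^{\alpha_2/\alpha_3}$), and no bound on the total variation is required to stay inside the semigroup domain~$\mathcal{D}$.

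Given $\psi\in\mathcal{A}^w_{[L,h,b,T]}$, pick $\beta=(\beta_1,\dots,\beta_N)\in\mathcal{PC}^{1,N}_{[L,h,b,T]}$ with $\psi(x)=\beta(-x)$ and consider the associated map $\phi^\beta$ from~\eqref{phidef}, whose Riemann coordinates are $W\circ\phi^\beta=\beta$ by~\eqref{phidef-rich}. The hypotheses~\eqref{bh-bound-temple} on $(h,b)$ are exactly the bounds~\eqref{db-assumption3} required by Proposition~\ref{n-simple-waves-temple} with $d=h$, so that proposition yields a classical solution $\omega\colon[0,T]\times\R\to\Omega$ of~\eqref{Cauchy-phi-n}--\eqref{Cauchy-phi-in-n} with initial datum $\phi^\beta$. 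Writing $w(t,x)\doteq W(\omega(t,x))$, we have $\mathrm{Supp}(w(T,\cdot))\subseteq[-L,L]$ and $\|w(T,\cdot)\|_{L^\infty}\le h$.

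Now run time and space backwards: set $u(t,x)\doteq\omega(T-t,-x)$. Since $\omega_t+f(\omega)_x=0$, a direct computation gives $u_t+f(u)_x=0$, so $u$ is a classical solution of~\eqref{conlaws} on $[0,T]\times\R$, and $u(T,x)=\omega(0,-x)=\phi^\beta(-x)$, hence $W\circ u(T,\cdot)=\psi$. Define $\overline u\doteq u(0,\cdot)=\omega(T,-\cdot)$ and $\overline w\doteq W\circ\overline u$. From Step~2, $\mathrm{Supp}(\overline w)\subseteq I_L$ and $\|\overline w\|_{L^\infty}\le h\le M$. The $L^1$ bound follows from the support and $L^\infty$ control (componentwise integration on an interval of length at most $2L$, summed in $i$), and the bound $h\le m/(2L)$ in~\eqref{bh-bound-temple}. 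Hypothesis~\eqref{domain-temple} guarantees $\overline w(x)\in\Pi$ for every $x$, so $\overline u\in\mathcal{D}$ and $\overline w\in\mathcal{L}^w_{[I_L,m,M]}$.

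Finally, Remark~\ref{uniqueness-temple} applies (the data $\phi^\beta\in\mathcal{D}$ under~\eqref{domain-temple}, the classical solution $u$ is defined on $[0,T]$, and the semigroup is generated by~\eqref{conlaws} on~$\mathcal{D}$), so $u(t,\cdot)=S_t\overline u$ for every $t\in[0,T]$. Passing to Riemann coordinates, $S^w_T\overline w=W(S_T\overline u)=W\circ u(T,\cdot)=\psi$, proving~\eqref{att-incl-temple}. The step I expect to require the most care is the numerology of Step~4, namely checking that the constants in~\eqref{bh-bound-temple} really are compatible with membership of $\overline w$ in $\mathcal{L}^w_{[I_L,m,M]}$ under the chosen conventions for the vector-valued $L^1$ and $L^\infty$ norms; everything else is a transcription of the proof of Proposition~\ref{controllability} with the sharper estimates substituted.
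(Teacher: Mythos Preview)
Your proposal is correct and follows exactly the approach the paper indicates: the paper's own proof consists of a single sentence stating that it is ``entirely similar'' to that of Proposition~\ref{controllability}, invoking Proposition~\ref{n-simple-waves-temple}, Remark~\ref{uniqueness-temple}, and~\eqref{phidef-rich} in place of their general-system counterparts, and your write-up is a faithful expansion of precisely this substitution. Your caution about the $L^1$ numerology is well-placed: with the convention $\|w\|_{L^1}=\sum_i\|w_i\|_{L^1}$, the support bound $|\mathrm{Supp}(\overline w)|\le 2L$ and $\|\overline w\|_{L^\infty}\le h$ give $\|\overline w\|_{L^1}\le 2NLh$, so the condition $h\le m/(2L)$ in~\eqref{bh-bound-temple} is off by a harmless factor of $N$---but this discrepancy is inherited verbatim from the paper's own estimate~\eqref{supp-norm-omT} in the proof of Proposition~\ref{controllability} and does not affect the downstream $\varepsilon$-entropy bounds, which are stated only for $\varepsilon$ sufficiently small.
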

\begin{proof}
The proof of Proposition~\ref{controllability-temple} is entirely similar  to that of Proposition~\ref{controllability},
relying on Proposition~\ref{n-simple-waves-temple} and Remark~\ref{uniqueness-temple}, 
and recalling~\eqref{phidef-rich}, thus we omit it.
\end{proof}

\subsection{Lower compactness estimates on a family of simple waves}

We shall provide now a lower estimate on the $\varepsilon$-entropy
of the class $\mathcal{A}_{[L,m,M,b]}$ introduced in~\eqref{attainable set-T}.
To this end, set
\begin{equation}
\label{a5-def}
\alpha_6\doteq
\sup\Big\{\big|D r_i(u)\big|\,; \, u\in B_{\overline d},\,  i =1,\dots,N\Big\},
%(u)\doteq \max_k \big|D r_k(u)\big|, \qquad\quad \alpha_6\doteq \Gamma_5(0),
\end{equation}
where $ B_{\overline d}$ denotes as usual a ball centered in the origin and contained in the domain $\Omega$ of the flux
function~$f$.
%and observe that, by the  regularity of the flux $f(u)$,  we may assume 
%as in~\eqref{lambdadef}, \eqref{alpha-1-2-bound1} that there holds
%%
%\begin{equation}
%\label{alpha5-bound} 
%\sup\Big\{\Gamma_5(u)\,; \, u\in\Omega\cap B_{\overline d}\,\Big\}
%< \frac{3\,\alpha_6}{2}\,.
%\end{equation}
%%
Following a similar strategy as the one pursued in~\cite{AOK} we then establish the following
\begin{proposition}
\label{Combinatory}
In the same setting of Proposition~\ref{controllability}, given $L, b>0$, $0<M< \overline d$, and $T>0$ satisfying~\eqref{T-assumption1},
%$0\!<\!M\!<\! \min\{d_0, L/(3N\alpha_0 T)\}$,  
%with~$\alpha_0$ as in~\eqref{lambdadef}, 
for every $\varepsilon >0$ satisfying 
\begin{equation}
\label{cond1}
\varepsilon\leq\min\bigg\{\frac{LNM}{24},\, \frac{LN}{48\alpha_6}\bigg\},
%\quad\mathrm{and}\quad \frac{\varepsilon}{3L}\cdot\|\nabla r_j\|_{L^{\infty}\big(B(0,\frac{2\varepsilon}{3L})\big)}\leq 1
\end{equation}
$($$\alpha_6$ being the constant in~\eqref{a5-def}$)$,
%%
%\begin{equation}
%\label{a1-def}
%\alpha_6\doteq \sup\Big\{|D r_k(u)\cdot r_k(u)|\ \big|\ u\in B_{d_0}, \ k=1,\dots,N\Big\},
%\end{equation}
%%
letting $\mathcal{A}_{[L,M,b, T]}$ be the set defined in~\eqref{attainable set-T},
one has
\begin{equation}
\label{low-bound-entr-A}
H_{\varepsilon}\Big(\mathcal{A}_{[L,M,b, T]}\ |\ L^1(\mathbb{R},\Omega)\Big)\geq \frac{L^2N^2 b}{216\, \ln (2)}\cdot\frac{1}{\varepsilon}.
\end{equation}
\end{proposition}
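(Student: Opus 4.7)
The plan is to build a large combinatorial family of test profiles inside $\mathcal{A}_{[L,M,b,T]}$ and to estimate how many of them can lie in a single cell of an $\varepsilon$-cover, in the spirit of the BKP combinatorial argument. Fix an integer $n$ to be chosen comparable to $bL^2N/\varepsilon$, set $\Delta:=L/n$, partition each support interval $[\xi_i^-,\xi_i^+]$ into $n$ equal sub-intervals, and let $\tau_k^i$ denote the standard tent on the $k$-th sub-interval, of slope $\pm b$ and peak height $b\Delta/2$. For every binary multi-index $\sigma=(\sigma^{(i)}_k)\in\{0,1\}^{Nn}$, set
\[
\beta_i^\sigma(x):=\sum_{k=1}^n\sigma^{(i)}_k\,\tau_k^i(x),\qquad
\psi^\sigma(x):=\phi^\beta(-x)=\sum_{i=1}^N R_i\bigl(\beta_i^\sigma(-x)\bigr),
\]
with $\phi^\beta$ as in~\eqref{phidef}. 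Since $|\dot\beta_i^\sigma|\le b$ and $|\beta_i^\sigma|\le b\Delta/2$ by construction, the first clause of~\eqref{cond1}, namely $\varepsilon\le LNM/24$, permits a choice of $n$ ensuring $b\Delta/2\le M$; then $\beta_i^\sigma\in\mathcal{PC}^1_{[M,b]}$ and hence $\psi^\sigma\in\mathcal{A}_{[L,M,b,T]}$, producing $2^{Nn}$ distinct test profiles.

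The second step is the separation estimate. From $R_i(s)-R_i(s')=(s-s')\int_0^1 r_i\bigl(R_i(s'+t(s-s'))\bigr)\,dt$, together with $|r_i|\equiv 1$, $|Dr_i|\le\alpha_6$, and $|R_i(\cdot)|\le|\cdot|$, one derives
\[
\bigl|R_i(s)-R_i(s')\bigr|\ \ge\ \bigl(1-\alpha_6|s-s'|/2\bigr)|s-s'|.
\]
With $|\beta_i^\sigma|\le b\Delta/2$, the second clause of~\eqref{cond1}, $\varepsilon\le LN/(48\alpha_6)$, forces $\alpha_6 b\Delta\le 1$ for the selected $n$, so the prefactor is bounded below by $1/2$. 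Because the supports $[\xi_i^-,\xi_i^+]$ are disjoint and each tent has integral $b\Delta^2/4$, summing over $i$ and changing variables $x\mapsto -x$ yields
\[
\bigl\|\psi^\sigma-\psi^{\sigma'}\bigr\|_{L^1(\R,\Omega)}\ \ge\ \frac{b\Delta^2}{8}\,d_H(\sigma,\sigma'),
\]
where $d_H$ denotes the total Hamming distance on $\{0,1\}^{Nn}$.

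For the combinatorial core, let $\{E_\alpha\}$ be any $\varepsilon$-cover of $\mathcal{A}_{[L,M,b,T]}$ and set $\Sigma_\alpha:=\{\sigma:\psi^\sigma\in E_\alpha\}$. Since $\mathrm{diam}\,E_\alpha\le 2\varepsilon$, any two strings in $\Sigma_\alpha$ have Hamming distance at most $r:=16\varepsilon/(b\Delta^2)$, so the standard entropy bound on Hamming-ball volumes gives $|\Sigma_\alpha|\le 2^{Nn\,h(r/Nn)}$ (for $r\le Nn/2$, with $h(p):=-p\log_2 p-(1-p)\log_2(1-p)$ the binary entropy). Combining with $\sum_\alpha|\Sigma_\alpha|\ge 2^{Nn}$ yields
\[
H_\varepsilon\bigl(\mathcal{A}_{[L,M,b,T]}\,\big|\,L^1\bigr)\ \ge\ Nn\bigl(1-h(r/Nn)\bigr)\ =\ \frac{bL^2N^2}{16\varepsilon}\,p\bigl(1-h(p)\bigr),
\]
where $p:=r/Nn=16\varepsilon n/(bL^2N)$. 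Selecting $n$ to place $p$ near the maximizer of $p(1-h(p))$ on $(0,1/2)$ produces a positive universal constant; a routine refinement to the ternary alphabet $\sigma^{(i)}_k\in\{-1,0,1\}$ (which replaces $2^{Nn}$ by $3^{Nn}$ and supplies an extra factor $\log_2 3$) suffices to attain the targeted constant $1/(216\ln 2)$.

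The principal obstacle will be bookkeeping: a single integer $n$ must simultaneously enforce (i) $b\Delta/2\le M$ for admissibility of the tents, (ii) $\alpha_6 b\Delta\le 1$ so that $R_i$ is nearly isometric on the range of $\beta_i^\sigma$, and (iii) $r/Nn$ lying near the optimum of $p(1-h(p))$—all three controlled strictly via the explicit constants $24$ and $48$ in~\eqref{cond1}. Integer rounding of $n$ and borderline values of $\varepsilon$ are absorbed by the ``$\varepsilon$ sufficiently small'' hypothesis, and the explicit constant $1/(216\ln 2)$ in~\eqref{low-bound-entr-A} emerges from this joint calibration.
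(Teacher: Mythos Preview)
Your strategy coincides with the paper's: build a combinatorial family of tent profiles in $\mathcal{A}_{[L,M,b,T]}$, transfer $L^1$-distance to Hamming distance via a near-isometry of $R_i$ (controlled by $\alpha_6$), and bound the volume of Hamming balls. Two differences are worth noting. The paper uses \emph{sign-flipping} tents, $\beta_k^\iota$ with values $(-1)^{\iota_\ell}$ times a fixed tent, rather than present/absent tents; this doubles the $L^1$ separation per mismatched coordinate and is the reason your constant comes out worse. For the ball-volume step the paper invokes Hoeffding's inequality rather than the binary-entropy bound $|\Sigma_\alpha|\le 2^{Nn\,h(p)}$; the two are nearly equivalent numerically, but Hoeffding yields the clean expression $\exp\bigl(\tfrac{nN}{2}(1-q)^2\bigr)$ with $q=8\varepsilon/(LhN)$, which after the substitution $h=Lb/(2n)$ and the optimal choice $q=1/3$ (i.e.\ $\overline n=\lfloor NL^2b/(48\varepsilon)\rfloor+1$) gives exactly $L^2N^2b/(216\ln 2\cdot\varepsilon)$.

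There is a genuine gap in your write-up. Your binary construction yields only $H_\varepsilon\ge (bL^2N^2/16\varepsilon)\cdot\max_p p(1-h(p))$, and a calculation shows $\max_{p\in(0,1/2)}p(1-h(p))\approx 0.06$, giving roughly $1/267$ rather than $1/(216\ln 2)\approx 1/150$. You then assert that a ``routine refinement to the ternary alphabet \dots\ suffices to attain the targeted constant'', but this is neither carried out nor obviously true: enlarging the alphabet increases both the number of profiles and the ball volumes, and the net effect on the constant requires its own optimization. More seriously, you absorb the integer rounding of $n$ and borderline cases into an ``$\varepsilon$ sufficiently small'' hypothesis that the proposition does not grant: condition~\eqref{cond1} is the \emph{only} assumption on $\varepsilon$, and the paper verifies explicitly that with $\overline n$ and $h_{\overline n}$ as above, all three constraints (the analogue of your (i)--(iii)) hold precisely under~\eqref{cond1}. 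To close the argument you should either switch to sign-flipping tents and Hoeffding (matching the paper), or carry out the ternary computation in full and check that \eqref{cond1} alone suffices.
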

\begin{proof}
Towards a proof of~\eqref{low-bound-entr-A}, we shall first introduce a two-parameter family $\mathcal{F}_{n,h}\subset
\mathcal{A}_{[L,M,b, T]}$, depending on $n \geq 2$ and $h>0$,
of superposition of simple waves $\phi^\beta$, 
$\beta=(\beta_1,\dots,\beta_N)
%\in\mathcal{B}_{n,h}
\subset  \mathcal{PC}^{1,N}_{[L,M,b,T]}$,
defined as in~\eqref{phidef} in connection with piecewise affine, compactly supported maps 
$\beta_i\in \mathcal{PC}^1_{[M,b]}$. Next, we shall provide an optimal lower bound on the covering number 
$N_\varepsilon(\mathcal{F}_{n,h}\,| \, L^1(\mathbb{R},\Omega))$, for a suitable choice of $n,h$, which, in turn, 
will yield the lower bound~\eqref{low-bound-entr-A} on the $\varepsilon$-entropy of $\mathcal{A}_{[L,M,b, T]}$.
\medskip

\noindent
{\bf 1.} 
Given any integer $n\geq 2$ and any constant $h>0$, 
for every $k$-th characteristic family
and for any given $n$-tuple $\iota=(\iota_1,\dots,\iota_n)\in\lbrace{0,1\rbrace}^n$, 
we consider 
the function $\beta_k^{\iota}:\R\rightarrow [-h,h]$, 
with support contained in $[\xi_k^-, \xi_k^+]$, defined by
setting
(see Figure \ref{pic}) 
\begin{equation}
\label{DefFiota}
\beta_k^{\iota}(x) \doteq
(-1)^{\iota_\ell}
\,\frac{2hn}{L}\!\cdot\! \bigg( \frac{L}{2n}-\Big|x-\xi_k^- -(2\ell+1)\cdot\frac{L}{2n}\Big| \bigg)
\qquad\quad
\forall~x\in\!\bigg[\xi_k^- \!+\!\frac{\ell \,L}{n N},\ \xi_k^- \!+\!\frac{(\ell+1) L}{n N}\bigg],
\end{equation}
for all $\ell \in \{0,\dots,n-1\}$. Recall that the quantities  $\xi_k^\pm=\pm L/2-\lambda_k(0)\cdot T$ were introduced in~\eqref{Itauset-def2}.
\begin{center} 
\begin{figure}[htbp]
\begin{center}
\input{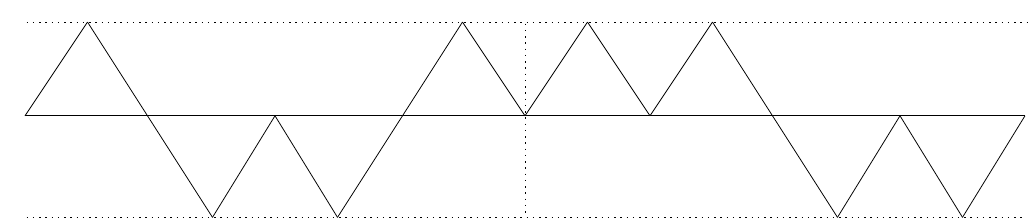_t}
\end{center}
\caption{The function $\beta_k^{\iota}$ for $n=8$ and $\iota=(0,1,1,0,0,0,1,1)$.}
\label{pic}
\end{figure}
%\resizebox{!}{3truecm}{\input zigzag.pdf_t}\\
%\includegraphics[width=15cm,height=2.5cm]{zigzag}\\
%\quad\\
\end{center}
%
%\ \par
%
Observe that, if we assume
\begin{equation}
\label{assumption-h}
0<h\leq \min\bigg\lbrace{M,\ \frac{Lb}{2n}
%,\ \frac{m}{L}
\bigg\rbrace},
\end{equation}
by definition~\eqref{PCset}
it follows that  $\beta_k^{\iota}\in \mathcal{PC}^1_{[M,b]}$, for every  $n$-tuple $\iota=(\iota_1,\dots,\iota_n)\in\lbrace{0,1\rbrace}^n$.
Therefore, for any given $N$-tuple of $n$-tuples $(\iota_1,\dots,\iota_N)\in(\lbrace{0,1\rbrace}^n)^N$,
letting $\beta_k^{\iota_k}$, $k=1,\dots,N$, be the maps defined as in~\eqref{DefFiota}, 
and recalling definition~\eqref{PCNset}, we deduce that 
%$\beta^{\iota_1,\dots,\iota_N}\doteq
$(\beta_1^{\iota_1},\dots,\beta_N^{\iota_N})\in \mathcal{PC}^{1,N}_{[L,M,b,T]}$. 
%Moreover, letting $\phi^{\iota_1,\dots,\iota_N}\doteq\phi^{\beta^{\iota_1,\dots,\iota_N}}$ be the map defined as in~\eqref{phidef} in connection
%with the $N$-tuple $(\beta_1^{\iota_1},\dots,\beta_N^{\iota_N})$, we derive by~\eqref{supp-w},
%\eqref{est1swn}, that
%%
%\begin{equation}
%\label{phibeta-est}
%|\mathrm{Supp}(\phi^{\iota_1,\dots,\iota_N})|\leq L,
%\qquad\quad
%\|\phi^{\iota_1,\dots,\iota_N}\|_{L^{\infty}(\R,\R)}\leq h,
%\qquad\qquad
%\|\phi^{\iota_1,\dots,\iota_N}\|_{L^1(\R,\R)}\leq L h.
%\end{equation}
%%
Thus, 
%with the above notations and 
%by definition~\eqref{attainable set-T}, it follows that
for all $n\geq 2$ and $h$ satisfying \eqref{assumption-h},
setting
\begin{equation}
\label{Bnhset-def}
\begin{aligned}
\mathcal{B}_{n,h}\doteq
\Big\{
%\beta^{\iota_1,\dots,\iota_N}\doteq
(\beta_1^{\iota_1},\dots,\beta_N^{\iota_N})
%\in \mathcal{PC}^{1,N}_{[L,M,b,T]}
\ \big| \
\beta_k^{\iota_k}:\R\to [-h,h]\ \ \text{defined as in~\eqref{DefFiota}
with}& \ \mathrm{Supp}(\beta_k^{\iota_k})\subset [\xi_k^-,\xi_k^+]\ \  \forall~k, 
\\
\noalign{\smallskip}
&\qquad(\iota_1,\dots,\iota_N)\in(\lbrace{0,1\rbrace}^n)^N
\Big\},
\end{aligned}
\end{equation}
one has
\begin{equation}
\label{Bnhsubet}
\mathcal{B}_{n,h}\subset \mathcal{PC}^{1,N}_{[L,M,b,T]}.
\end{equation}
Then, for any given $N$-tuple of $n$-tuples $(\iota_1,\dots,\iota_N)\in(\lbrace{0,1\rbrace}^n)^N$,
let
\begin{equation}
\nonumber
\phi^{\iota_1,\dots,\iota_N}\doteq
\phi^{(\beta_1^{\iota_1},\dots,\beta_N^{\iota_N})}
\end{equation}
denote  the map defined as in~\eqref{phidef} in connection
with the $N$-tuple $(\beta_1^{\iota_1},\dots,\beta_N^{\iota_N})\in \mathcal{B}_{n,h}$,
and set
\begin{equation}
\label{Fnhset-def}
\mathcal{F}_{n,h}\doteq
\Big\{
\phi^{\iota_1,\dots,\iota_N}(-\cdot)
%\doteq \phi^{(\beta_1^{\iota_1},\dots,\beta_N^{\iota_N})}(-\cdot)
%\phi^{\beta^{\iota_1,\dots,\iota_N}} 
\ \big| \
%(\beta_1^{\iota_1},\dots,\beta_N^{\iota_N})\in \mathcal{B}_{n,h}
(\iota_1,\dots,\iota_N)\in(\lbrace{0,1\rbrace}^n)^N
\Big\}.
\end{equation}
Recalling definition~\eqref{attainable set-T}, and because of~\eqref{Bnhsubet},
it follows that there holds
\begin{equation}
\label{Fnhsubset}
\mathcal{F}_{n,h}\subset  \mathcal{A}_{[L,M,b, T]},
\end{equation}
for all $n\geq 2$ and $h>0$ satisfying~\eqref{assumption-h}.
%where $\phi^{(\beta_1^{\iota_1},\dots,\beta_N^{\iota_N})}$ 
%denotes the map defined as in~\eqref{phidef} in connection
%with the $N$-tuple $(\beta_1^{\iota_1},\dots,\beta_N^{\iota_N})$.
Therefore, observing that~\eqref{Fnhset-def} implies
\begin{equation}
\label{ent-ineq}
H_{\varepsilon}\Big(\mathcal{A}_{[L,M,b, T]}\ |\ L^1(\mathbb{R},\Omega)\Big)\geq
H_{\varepsilon}\Big(\mathcal{F}_{n,h}\ |\ L^1(\mathbb{R},\Omega)\Big),
\end{equation}
we deduce that, in order to establish~\eqref{low-bound-entr-A},
it will be sufficient to show 
\begin{equation}
\label{low-bound-entr-Fnh}
H_{\varepsilon}\Big(\mathcal{F}_{n,h}\ |\ L^1(\mathbb{R},\Omega)\Big)\geq \frac{L^2N^2 b}{216\, \ln (2)}\cdot\frac{1}{\varepsilon},
\end{equation}
%
%that the same lower bound holds on the right-hand side 
%of~\eqref{ent-ineq}, 
for a suitable choice of $n\geq 2$ and $h>0$ satisfying~\eqref{assumption-h}.
\medskip

\noindent
{\bf 2.}
Towards an estimate of the covering number $N_{\varepsilon}(\mathcal{F}_{n,h}\ |\ L^{1}(\R,\Omega))$,
let us denote with $\mathcal{C}^{\mathcal{F}}_n(\varepsilon)$ the maximum
number of elements in $\mathcal{F}_{n,h}$ that have $L^1$-distance less than $\varepsilon$
from a given element of $\mathcal{F}_{n,h}$. Namely, for any fixed 
$\phi^{\bar\iota_1,\dots,\bar\iota_N}(-\cdot)\doteq
\phi^{(\beta_1^{\bar\iota_1},\dots,\beta_N^{\bar\iota_N})}(-\cdot)\in \mathcal{F}_{n,h}$,
%fixed $N$-tupla 
$\bar\iota\doteq(\bar\iota_1,\dots,\bar\iota_N)\in(\lbrace{0,1\rbrace}^n)^N$, 
define
\begin{equation}
\label{CeF-def}
\mathcal{C}^{\mathcal{F}}_{n,\bar\iota}(\varepsilon)
%\mathcal{C}_{\bar\iota}^{\mathcal{F}_{n,h}}(\varepsilon)
\doteq
\mbox{Card} \Big\{
%\phi^{(\beta_1^{\iota_1},\dots,\beta_N^{\iota_N})}(-\cdot)
\phi^{\iota_1,\dots,\iota_N}(-\cdot)
\in\mathcal{F}_{n,h}
\ \big| \
\big\|
%\phi^{(\beta_1^{\iota_1},\dots,\beta_N^{\iota_N})}-
\phi^{\iota_1,\dots,\iota_N}-
\phi^{\bar\iota_1,\dots,\bar\iota_N}
%\phi^{(\beta_1^{\bar\iota_1},\dots,\beta_N^{\bar\iota_N})}
\big\|_{L^{1}(\R,\Omega)}\leq \varepsilon
\Big\},
\end{equation}
%
%where $\mbox{Card} (A)$ denotes the number of elements of a set $A$, 
and set
\begin{equation}
\label{CeF-def2}
\mathcal{C}^{\mathcal{F}}_n(\varepsilon)
\doteq
\max\Big\{ \mathcal{C}^{\mathcal{F}}_{n,\iota}(\varepsilon)
\ \big| \
\iota\doteq(\iota_1,\dots,\iota_N)\in(\lbrace{0,1\rbrace}^n)^N
\Big\}.
\end{equation}
Observe that any element of an $\varepsilon$-cover of $\mathcal{F}_{n,h}$
contains at most $\mathcal{C}^{\mathcal{F}}_n(2\varepsilon)$
functions of $\mathcal{F}_{n,h}$. Thus, since the cardinality of $\mathcal{F}_{n,h}$
is the same of the set $\mathcal{B}_{n,h}$, which is $2^{nN}$, it follows that 
the number of sets in an $\varepsilon$-cover of $\mathcal{F}_{n,h}$ is at least
\begin{equation}
\label{CeF-est1}
N_{\varepsilon}(\mathcal{F}_{n,h}\ |\ L^{1}(\R,\Omega))\geq
%\frac{\#(\mathcal{F}_{n,h})}{\mathcal{C}^{\mathcal{F}_{n,h}}(\varepsilon)}
%=\frac{\#(\mathcal{B}_{n,h})}{\mathcal{C}^{\mathcal{F}_{n,h}}(\varepsilon)}
%=
\frac{2^{nN}}{\mathcal{C}^{\mathcal{F}}_n(2\varepsilon)}.
\end{equation}
Therefore, we wish to provide now an upper bound on $\mathcal{C}^{\mathcal{F}}_n(2\varepsilon)$.
To this end, consider any two $N$-tuples $\bar{\iota}\neq\iota\in(\lbrace{0,1\rbrace}^n)^N$,
$\bar\iota\doteq(\bar\iota_1,\dots,\bar\iota_N), \iota\doteq(\iota_1,\dots,\iota_N)$,
and let $\phi^{\bar\iota_1,\dots,\bar\iota_N}\doteq
\phi^{(\beta_1^{\bar\iota_1},\dots,\beta_N^{\bar\iota_N})}$,
$\phi^{\iota_1,\dots,\iota_N}\doteq
\phi^{(\beta_1^{\iota_1},\dots,\beta_N^{\iota_N})}$,
denote the maps defined as in~\eqref{phidef} in connection with the
corresponding $N$-tuples $\beta^{\bar\iota_1,\dots,\bar\iota_N}\doteq
(\beta_1^{\bar\iota_1},\dots,\beta_N^{\bar\iota_N})$ and
$\beta^{\iota_1,\dots,\iota_N}\doteq (\beta_1^{\iota_1},\dots,\beta_N^{\iota_N})$
of $\mathcal{B}_{n,h}$. 
Recall that the eigenvectors are normalized so that $|r_k(u)|\equiv 1$,
for all $k=1,\dots,N$. Moreover, by definitions~\eqref{phidef}, \eqref{Bnhset-def},
and because of~\eqref{T-assumption1},
the maps $\phi^{\bar\iota_1,\dots,\bar\iota_N}$,
$\phi^{\iota_1,\dots,\iota_N}$ 
and $\beta^{\bar\iota_1,\dots,\bar\iota_N}$,
$\beta^{\iota_1,\dots,\iota_N}$ 
are supported on the disjoint union of  sets $[\xi_k^-,\xi_k^+]$,
$k=1,\dots,N$.
Thus, recalling~\eqref{a5-def}, we find
\begin{equation}
\label{L1-phi-est}
\begin{aligned}
\big\|\phi^{\bar\iota_1,\dots,\bar\iota_N}-\phi^{\iota_1,\dots,\iota_N}\big\|_{L^1(\R,\Omega)}
&=\sum_{k=1}^N \int_{[\xi_k^-,\xi_k^+]}\Big|R_k(\beta_k^{\bar\iota_k}(x))-R_k(\beta_k^{\iota_k}(x))\Big|dx
\\
\noalign{\smallskip}
&=\sum_{k=1}^N\int_{[\xi_k^-,\xi_k^+]}\bigg|\int_{\beta_k^{\iota_k}(x)}^{\beta_k^{\bar\iota_k}(x)}r_k(R_k(s))ds\bigg|dx 
\\
\noalign{\smallskip}
&\geq\sum_{k=1}^N\Bigg[
\int_{[\xi_k^-,\xi_k^+]}\bigg[\big|\beta_k^{\bar\iota_k}(x)-\beta_k^{\iota}(x)\big|-
\bigg|
\int_{\beta_k^{\iota_k}(x)}^{\beta_k^{\bar\iota_k}(x)}\big|r_k(R_k(s))-r_k(0)\big|ds
\bigg|
\,\bigg]dx\Bigg]
\\
\noalign{\smallskip}
&\geq\sum_{k=1}^N\int_{[\xi_k^-,\xi_k^+]}\big|\beta_k^{\bar\iota_k}(x)-\beta_k^{\iota}(x)\big|\cdot\Big[1-\|D r_k\|_{L^{\infty}(B_h, M_n(\R))}\cdot h\Big]dx
\\
\noalign{\smallskip}
&\geq
%\big\|\beta^{\bar\iota_1,\dots,\bar\iota_N}-\beta^{\iota_1,\dots,\iota_N}\big\|_{L^1(\R,\R^N)}
\sum_{k=1}^N \big\| \beta_k^{\bar\iota_k}-\beta_k^{\iota}\big\|_{L^1(\R,\R)}
\cdot\Big[1-\alpha_6\cdot h\Big],
\end{aligned}
\end{equation}
where $\alpha_6$ is the constant in~\eqref{a5-def}.
Hence, if we assume that
\begin{equation}
\label{assumption-h2}
0<h\leq\frac{1}{2\alpha_6},
\end{equation}
it follows from~\eqref{L1-phi-est} that, adopting (with a slight abuse of notation) 
%the usual $L^1$-distance on $\mathcal{F}_{n,h}$ and 
the $L^1$-distance
\begin{equation}
\label{L1-dist}
\qquad\big\|\beta^{\bar\iota_1,\dots,\bar\iota_N}-\beta^{\iota_1,\dots,\iota_N}\big\|_{L^1(\R,\R^N)}
\doteq \sum_{k=1}^N \big\| \beta_k^{\bar\iota_k}-\beta_k^{\iota}\big\|_{L^1(\R,\R)}
\qquad\forall~(\bar\iota_1,\dots,\bar\iota_N), (\iota_1,\dots,\iota_N)\in (\lbrace{0,1\rbrace}^n)^N,
\end{equation}
on the set $\mathcal{B}_{n,h}$ in~\eqref{Bnhset-def}, 
and the usual $L^1$-distance on the set $\mathcal{F}_{n,h}$ in~\eqref{Fnhset-def}, there holds
\begin{equation}
\label{beta-phi-est}
%\qquad\sum_{k=1}^N \big\| \beta_k^{\bar\iota_k}-\beta_k^{\iota}\big\|_{L^1(\R,\R)}
\big\|\beta^{\bar\iota_1,\dots,\bar\iota_N}-\beta^{\iota_1,\dots,\iota_N}\big\|_{L^1(\R,\R^N)}
\leq 2 \big\|\phi^{\bar\iota_1,\dots,\bar\iota_N}-\phi^{\iota_1,\dots,\iota_N}\big\|_{L^1(\R,\Omega)}
\qquad\forall~(\bar\iota_1,\dots,\bar\iota_N), (\iota_1,\dots,\iota_N)\in (\lbrace{0,1\rbrace}^n)^N.
\end{equation}
Then, if we define $\mathcal{C}^{\mathcal{B}}_n(\varepsilon)$  as the maximum
number of elements in $\mathcal{B}_{n,h}$ that have $L^1$-distance (defined as in~\eqref{L1-dist}) less than~$\varepsilon$
from any given element of $\mathcal{B}_{n,h}$,
%defined in a similar way to~\eqref{CeF-def}-\eqref{CeF-def2}, 
we deduce from~\eqref{beta-phi-est}
that $\mathcal{C}^{\mathcal{F}}_n(2\varepsilon)\leq \mathcal{C}^{\mathcal{B}}_n(4\varepsilon)$.
In turn, this inequality, together with~\eqref{CeF-est1}, yields
\begin{equation}
\label{CeF-est2}
N_{\varepsilon}(\mathcal{F}_{n,h}\ |\ L^{1}(\R,\Omega))\geq
\frac{2^{nN}}{\mathcal{C}^{\mathcal{B}}_n(4\varepsilon)}
%\qquad\quad\forall~0<h\leq\frac{1}{3\alpha_6}.
\end{equation}
for all $h$ satisfying \eqref{assumption-h2}. \par
In order to provide an upper estimate on $\mathcal{C}^{\mathcal{B}}_n(4\varepsilon)$,
observe that, given any pair of $nN$-tuples $(\iota_1,\dots,\iota_N)$, $(\bar\iota_1,\dots,\bar\iota_N) \in (\lbrace{0,1\rbrace}^n)^N$,
letting $(\beta_1^{\iota_1},\dots,\beta_N^{\iota_N}), (\beta_1^{\bar\iota_1},\dots,\beta_N^{\bar\iota_N})$
denote the corresponding $N$-tuples in $\mathcal{B}_{n,h}$,
by definitions~\eqref{DefFiota}, \eqref{Bnhset-def}, \eqref{L1-dist},
and because every  interval $[\xi_i^-, \xi_i^+]$
has length $L$,
one has
\begin{equation}
\label{beta-dist-eq}
\big\|\beta^{\bar\iota_1,\dots,\bar\iota_N}-\beta^{\iota_1,\dots,\iota_N}\big\|_{L^1(\R,\R^N)}=
\frac{Lh}{n} \cdot d\big((\iota_1,\dots,\iota_N), (\bar\iota_1,\dots,\bar\iota_N)\big),
\end{equation}
where
\begin{equation*}
d\big((\iota_1,\dots,\iota_N), (\bar\iota_1,\dots,\bar\iota_N)\big) \doteq  
\mbox{Card} \Big\{ (k,\ell)\in\{1,\dots,N\}\times\{ 1,\dots,n \} \ \big|   \ (\iota_{k})_\ell \neq (\overline{\iota}_{k})_\ell \Big\}.
\end{equation*}
Then, given any fixed $nN$-tuple $\bar\iota\doteq(\bar\iota_1,\dots,\bar\iota_N)\in(\lbrace{0,1\rbrace}^n)^N$, 
define 
\begin{equation}
\label{CeI-def}
\mathcal{C}^{\mathcal{I}}_{n}(\varepsilon)
\doteq
\mbox{Card} \Big\{(\iota_1,\dots,\iota_N)\in(\{0,1\}^n)^N \  | \ 
d\big((\iota_1,\dots,\iota_N), (\bar\iota_1,\dots,\bar\iota_N)\big)\leq\varepsilon
\Big\}.
\end{equation}
Notice that the number $\mathcal{C}^{\mathcal{I}}_{n}(\varepsilon)$ is independent
of the choice of $\bar\iota\doteq(\bar\iota_1,\dots,\bar\iota_N)\in(\lbrace{0,1\rbrace}^n)^N$, 
and that, by~\eqref{beta-dist-eq}, there holds
\begin{equation}
\label{L1-dist-equiv}
\mathcal{C}^{\mathcal{B}}_n(4\varepsilon)=
\mathcal{C}^{\mathcal{I}}_{n}\bigg(\frac{4n\varepsilon}{Lh}\bigg).
\end{equation}
We next derive an upper bound on $\mathcal{C}^{\mathcal{I}}_{n}(\varepsilon)$ 
following the same strategy as in the proof of Proposition 2.2 in~\cite{AOK}.
Namely, by standard combinatorial properties, counting the $nN$-tuples that differ for a given number
of entries, we compute
\begin{equation}
\label{Ce}
\mathcal{C}^{\mathcal{I}}_{n}\bigg(\frac{4n\varepsilon}{Lh}\bigg) = \sum_{\ell=0}^{\left\lfloor \frac{4n \varepsilon}{Lh} \right\rfloor} \binom{nN}{\ell},
\end{equation}
where $\lfloor \alpha\rfloor \doteq \max\{z\in \Z\, \ | \  z\leq \alpha\}$ denotes the integer part of $\alpha$.
Next, observe that if $X_{1}, \dots, X_{nN}$ are independent random variables with Bernoulli distribution ${\mathbb P}(X_{i}=0)={\mathbb P}(X_{i}=1)= \frac{1}{2}$, then for any integer $k\leq nN$ one has
\begin{equation}
\label{Ce2}
{\mathbb P}( X_{1} + \dots + X_{nN} \leq k) = \frac{1}{2^{nN}} \sum_{\ell=0}^{k} \binom{nN}{\ell}.
\end{equation}
Now, we recall Hoeffding's inequality (\cite[Theorem 2]{Hoeffding})
which guarantees that, setting $S_{nN} \doteq X_{1} + \dots + X_{nN}$,
for any fixed $\mu>0$ there holds
\begin{equation}
\label{H}
{\mathbb P} (S_{nN} - {\mathbb{E}}(S_{nN}) \leq -\mu)  \leq \exp \left( - \frac{2 \mu^{2}}{nN}\right),
\end{equation}
where ${\mathbb{E}}(S_{nN})$ denotes the expectation of $S_{nN}$. 
Notice that by the above assumptions on $X_{1}, \dots, X_{nN}$,
we have  ${\mathbb{E}}(S_{nN})=\frac{nN}{2}$. Hence, taking 
$k\doteq\lfloor \frac{4n \varepsilon}{Lh} \rfloor$, $\mu\doteq \frac{nN}{2}- \lfloor  \frac{4n \varepsilon}{Lh} \rfloor$, 
and assuming
\begin{equation} 
\label{CondHoeffding}
%\frac{4n \varepsilon}{Lh}
\varepsilon
\leq \frac{NLh}{8},
\end{equation}
which implies $\mu>0$,
we deduce from~\eqref{L1-dist-equiv}-\eqref{H} that there holds
\begin{equation}
\label{EstCesp}
\begin{aligned}
\mathcal{C}^{\mathcal{B}}_n(4\varepsilon)
%\mathcal{C}^{\mathcal{I}}_{n}\bigg(\frac{4n\varepsilon}{Lh}\bigg) 
& \leq 2^{nN}\cdot
\exp \left(- 2 \frac{(\frac{nN}{2}-\lfloor \frac{4n \varepsilon}{Lh} \rfloor)^{2}}{nN}\right) 
\\
\noalign{\smallskip}
& \leq 2^{nN}\cdot \exp \left(- \frac{nN}{2} \left( 1- \frac{8\varepsilon}{LhN}\right)^{2}  \right) .
\end{aligned}
\end{equation}
In turn, \eqref{EstCesp} together with~\eqref{CeF-est2} yields 
\begin{equation}
\label{CeF-est3}
N_{\varepsilon}(\mathcal{F}_{n,h}\ |\ L^{1}(\R,\Omega))\geq
\exp \left(\frac{nN}{2} \left( 1- \frac{8\varepsilon}{LhN}\right)^{2}  \right),
%\qquad\quad\forall~0<h\leq\frac{1}{3\alpha_6}.
\end{equation}
for all $n\geq 2$ and $h$ satisfying~\eqref{assumption-h2}, \eqref{CondHoeffding}.
In order to derive the largest lower bound on the right-hand side of~\eqref{CeF-est3} we maximize the map
$$\Psi(h,n)\doteq \frac{nN}{2}\left( 1- \frac{8\varepsilon}{LhN}\right)^2,$$
with  $h, n$,
subject to~\eqref{assumption-h}, \eqref{assumption-h2}, \eqref{CondHoeffding}.
If we first fix $n\geq 2$, and then optimize the map $h\mapsto \Psi(h,n)$,  when $h$ satisfies the bound~\eqref{assumption-h},  we find that the maximum is attained for
\begin{equation} 
\label{Defh}
h_n \doteq  \frac{Lb}{2n}.
\end{equation}
Next, optimizing the map $n\mapsto \Psi(h_n,n)$, with $h_n$ satisfying~\eqref{CondHoeffding}, 
i.e. with $n\leq \frac{NL^2 b}{16\varepsilon}$,
we deduce that the maximum is attained for 
\begin{equation} 
\label{Defn}
\overline n\doteq \bigg\lfloor \frac{N L^2 b}{48 \varepsilon}\bigg\rfloor +1.
\end{equation}
One can check that
\begin{equation*}
h_{\overline n} = \frac{Lb}{2 \overline n}\leq \frac{24 \varepsilon}{NL}\,,
\qquad\qquad
\frac{NLh_{\overline n}}{8} = \frac{NL^2 b}{16\overline n} \geq \frac{3\varepsilon}{2}\,,
\end{equation*}
so that, with $h_{\overline n},\, \overline n$
defined by~\eqref{Defh}, \eqref{Defn}, all conditions~\eqref{assumption-h}, \eqref{assumption-h2}, \eqref{CondHoeffding}
are verified, provided that $\varepsilon$
satisfies~\eqref{cond1}. Hence, we deduce from~\eqref{CeF-est3}
that
\begin{equation}
\label{NeF-est4}
N_{\varepsilon}(\mathcal{F}_{\overline n,h_{\overline n}}\ |\ L^{1}(\R,\Omega))\geq
\exp\big(\Psi(h_{\overline n}, \overline n)\big) =
\exp\bigg(\frac{L^2N^2 b}{216}\cdot\frac{1}{\varepsilon}\bigg),
\end{equation}
which, in turn, yields
\begin{equation}
\label{NeF-est5}
H_{\varepsilon}(\mathcal{F}_{\overline n,h_{\overline n}}\ |\ L^{1}(\R,\Omega))\geq
\frac{L^2N^2 b}{216\, \ln (2)}\cdot\frac{1}{\varepsilon}
\end{equation}
for all $\varepsilon$ satisfying~\eqref{cond1}.
By the above observations at Point~1., recalling~\eqref{ent-ineq}, this concludes the proof of the proposition.
\end{proof}
\begin{remark} 
\label{rem-scalar-combinatory}
When $N=1$, under the same assumptions as Proposition~\ref{controllability}
the following holds.
Given $L, M, b, T>0$, 
for every $0<\varepsilon \leq LM/12$, letting $\mathcal{A}_{[L,M,b,T]}$ be the set defined in~\eqref{attainable set-T}
one has
\begin{equation}
\label{low-bound-entr-scalar}
H_{\varepsilon}\Big(\mathcal{A}_{[L,M,b, T]}\ |\ L^1(\mathbb{R})\Big)\geq \frac{L^2 b}{108\, \ln (2)}\cdot\frac{1}{\varepsilon}.
\end{equation}
\end{remark}
%\begin{remark}\label{remark1}
%The inequality 
%\[
%\mathcal{H}_{\varepsilon}\Big(\mathcal{A}^j_{[\mathcal{L},m,M,b]}\ |\ L^1(\mathbb{R},\Omega)\Big)\geq \frac{bL^2}{27}\cdot\frac{1}{\varepsilon}.
%\]
%still holds for $\mathcal{L}=[L_1,L_2]$ where $L_2-L_1=2L$.
%\end{remark}
%%
\medskip

In order  to analyze the 
$\varepsilon$-entropy of solutions to Temple systems of conservation laws,
we shall now provide a lower bound on the $\varepsilon$-entropy 
of the class of maps $\mathcal{A}^w_{[L,M,b, T]}$ introduced in~\eqref{attainable set-T-w}.
Here, we are considering the 
$\varepsilon$-entropy of $\mathcal{A}^w_{[L,M,b, T]}$ related to the topology induced by the $L^1$-norm $\|w\|_{L^1}\doteq \sum_i \|w_i\|_{L^1}$.
\begin{proposition}
\label{Combinatory-temple}
%In the same setting of Proposition~\ref{controllability}, 
%and with the same notations of Theorem~\ref{mainthm},  assume that
%system~\eqref{conlaws} is of Temple class, strictly hyperbolic, 
%and that all characteristic families are genuinely nonlinear
%or linearly degenerate. 
%Then,  g
Given $L, M, b, T>0$,
%and $0<M< \overline d'$, 
for every $\varepsilon >0$ satisfying 
\begin{equation}
\label{cond1-temple}
\varepsilon\leq\frac{LNM}{12},
\end{equation}
letting $\mathcal{A}^w_{[L,M,b, T]}$ be the set defined in~\eqref{attainable set-T-w},
one has
\begin{equation}
\label{low-bound-entr-A-temple}
H_{\varepsilon}\Big(\mathcal{A}^w_{[L,M,b, T]}\ |\ L^1(\mathbb{R},\Pi)\Big)\geq \frac{L^2N^2 b}{108\, \ln (2)}\cdot\frac{1}{\varepsilon}.
\end{equation}
\end{proposition}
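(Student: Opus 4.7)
The strategy is to mimic the proof of Proposition~\ref{Combinatory}, with two simplifications afforded by the Temple/rich setting. Namely, by~\eqref{phidef-rich}, the map $\phi^\beta$ associated to an $N$-tuple $\beta=(\beta_1,\dots,\beta_N)\in\mathcal{PC}^{1,N}_{[L,M,b,T]}$ has the trivial Riemann coordinate expression $W\circ\phi^\beta=(\beta_1,\dots,\beta_N)$. Hence, in $L^1(\R,\Pi)$ with the norm $\|w\|_{L^1}=\sum_i\|w_i\|_{L^1}$, the $L^1$--distance between two elements $\psi=\beta(-\cdot)$ and $\overline\psi=\overline\beta(-\cdot)$ of $\mathcal{A}^w_{[L,M,b,T]}$ equals exactly $\sum_k \|\beta_k-\overline\beta_k\|_{L^1(\R,\R)}$, with no nonlinear-curve correction. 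In particular, the assumption~\eqref{assumption-h2} on $h$ used in the proof of Proposition~\ref{Combinatory} is not needed here, and the factor $2$ appearing in~\eqref{beta-phi-est} disappears.

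Concretely, I would introduce exactly the same two-parameter family of piecewise affine zig-zag profiles $\beta_k^{\iota}$ defined by~\eqref{DefFiota}, the associated set $\mathcal{B}_{n,h}\subset\mathcal{PC}^{1,N}_{[L,M,b,T]}$ as in~\eqref{Bnhset-def}, and the family
\begin{equation*}
\mathcal{F}^w_{n,h}\doteq\Big\{\beta(-\cdot)\ \big|\ \beta=(\beta_1^{\iota_1},\dots,\beta_N^{\iota_N})\in\mathcal{B}_{n,h},\ (\iota_1,\dots,\iota_N)\in(\{0,1\}^n)^N\Big\}\subset\mathcal{A}^w_{[L,M,b,T]},
\end{equation*}
subject only to the condition $0<h\leq\min\{M,\,Lb/(2n)\}$ inherited from~\eqref{assumption-h}. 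By the identity $W\circ\phi^\beta=(\beta_1,\dots,\beta_N)$ and the key computation~\eqref{beta-dist-eq}, for two distinct multi-indices $\iota,\overline\iota\in(\{0,1\}^n)^N$ the $L^1(\R,\Pi)$--distance between the corresponding elements of $\mathcal{F}^w_{n,h}$ is exactly $(Lh/n)\cdot d(\iota,\overline\iota)$, where $d$ is the Hamming distance on $(\{0,1\}^n)^N$.

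From this identity the same combinatorial argument as in the proof of Proposition~\ref{Combinatory} applies: letting $\mathcal{C}^{\mathcal{F}^w}_n(\varepsilon)$ denote the maximal number of elements of $\mathcal{F}^w_{n,h}$ at $L^1$--distance $\leq\varepsilon$ from a fixed element, the identity gives directly $\mathcal{C}^{\mathcal{F}^w}_n(2\varepsilon)=\mathcal{C}^{\mathcal{I}}_n(2n\varepsilon/(Lh))$ (without the factor-of-two loss of~\eqref{L1-dist-equiv}). Applying Hoeffding's inequality~\eqref{H} with $k=\lfloor 2n\varepsilon/(Lh)\rfloor$ and $\mu=nN/2-k$, which is positive provided $\varepsilon\leq NLh/4$, yields
\begin{equation*}
N_\varepsilon\big(\mathcal{F}^w_{n,h}\,\big|\,L^1(\R,\Pi)\big)\ \geq\ \exp\!\left(\tfrac{nN}{2}\Big(1-\tfrac{4\varepsilon}{LhN}\Big)^{\!2}\right).
\end{equation*}

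Finally, I would optimize the right-hand side over $(n,h)$ subject to $h\leq Lb/(2n)$, $h\leq M$ and $\varepsilon\leq NLh/4$. Fixing $n$ and maximizing in $h$ gives $h_n=Lb/(2n)$, reducing the exponent to $\Psi(n)=(nN/2)(1-an)^2$ with $a=8\varepsilon/(L^2bN)$. An elementary computation shows $\Psi'(n)=(N/2)(1-an)(1-3an)$, so the optimal integer choice is $\overline n\doteq\lfloor NL^2b/(24\varepsilon)\rfloor+1$, yielding $\Psi(\overline n)\geq L^2N^2b/(108\,\varepsilon)$. One then checks that the constraints $h_{\overline n}\leq M$ and $\varepsilon\leq NLh_{\overline n}/4$ are both ensured by~\eqref{cond1-temple}, and the bound~\eqref{low-bound-entr-A-temple} follows after dividing by $\ln 2$. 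There is no significant obstacle: the whole argument of Proposition~\ref{Combinatory} is reused, and the only delicate point is making sure that working in Riemann coordinates really does give the exact equality $\|\phi^\beta-\phi^{\overline\beta}\|_{L^1(\R,\Pi)}=\sum_k\|\beta_k-\overline\beta_k\|_{L^1}$, which follows at once from the rich structure via~\eqref{phidef-rich}.
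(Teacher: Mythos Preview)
Your proposal is correct and follows essentially the same approach as the paper's own proof. You correctly identify the two simplifications available in Riemann coordinates---the exact identity $\|\psi-\overline\psi\|_{L^1(\R,\Pi)}=\sum_k\|\beta_k-\overline\beta_k\|_{L^1}$ via~\eqref{phidef-rich}, which removes the factor~$2$ of~\eqref{beta-phi-est} and the constraint~\eqref{assumption-h2}---and then carry out the identical combinatorial and optimization argument, arriving at the same choice $\overline n=\lfloor NL^2b/(24\varepsilon)\rfloor+1$ and the same bound.
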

\begin{proof}
The lower bound~\eqref{low-bound-entr-A-temple} is established with similar arguments to those of
the proof of Proposition~\ref{Combinatory}. Namely, given any integer $n\geq 2$ and any constant $h$
satisfying~\eqref{assumption-h},
we consider the set $\mathcal{B}_{n,h}$ introduced in~\eqref{Bnhset-def}.
Observe that by definitions~\eqref{attainable set-T-w}, \eqref{Bnhset-def} one has
\begin{equation}
\label{ent-ineq-temple}
N_{\varepsilon}\Big(\mathcal{A}_{[L,M,b, T]}\ |\ L^1(\mathbb{R},\Pi)\Big)\geq
N_{\varepsilon}\Big(\mathcal{B}_{n,h}\ |\ L^1(\mathbb{R},\Pi)\Big).
\end{equation}
Next, let $\mathcal{C}^{\mathcal{B}}_n(\varepsilon)$  denote the maximum
number of elements in $\mathcal{B}_{n,h}$ that have $L^1$-distance (defined as in~\eqref{L1-dist}) less than~$\varepsilon$
from any given element of $\mathcal{B}_{n,h}$. With the same combinatory arguments of the proof of Proposition~\ref{Combinatory},
for all
\begin{equation} 
\label{CondHoeffding-2}
\varepsilon
\leq \frac{NLh}{4},
\end{equation}
we derive
\begin{equation}
\label{CeB-est-temple}
%\begin{aligned}
N_{\varepsilon}\Big(\mathcal{B}_{n,h}\ |\ L^{1}(\R,\Pi)\Big)
%&\geq
\geq\frac{2^{nN}}{\mathcal{C}^{\mathcal{B}}_n(2\varepsilon)}
%%
%\\
%&
\geq\exp\big(\Psi(h,  n)\big),
%\end{aligned}
\end{equation}
with 
\begin{equation}
\label{psi-def2}
\Psi(h,n)\doteq \frac{nN}{2}\left( 1- \frac{4\varepsilon}{LhN}\right)^2.
\end{equation}
Maximizing the map~\eqref{psi-def2} when $h, n$ are subject to~\eqref{assumption-h}, \eqref{CondHoeffding-2}, 
and combining \eqref{ent-ineq-temple}, \eqref{CeB-est-temple}, we find
\begin{equation}
\label{low-bound-entr-A-temple-2}
N_{\varepsilon}\Big(\mathcal{A}^w_{[L,M,b, T]}\ |\ L^1(\mathbb{R},\Pi)\Big)\geq \exp\big(\Psi(h_{\overline n}, \overline n)\big),
\end{equation}
with
\begin{equation} 
\label{Defn-2}
\overline n\doteq \bigg\lfloor\frac{N L^2 b}{24 \varepsilon}\bigg\rfloor+1,
\qquad\qquad
h_{\overline n} = \frac{Lb}{2 \overline n}\leq \frac{12 \varepsilon}{NL}.
\end{equation}
Finally, observing that
$$
\Psi(h_{\overline n}, \overline n) = \frac{L^2N^2 b}{108}\cdot\frac{1}{\varepsilon},
\qquad\qquad
\frac{NLh_{\overline n}}{4} \geq \frac{3\varepsilon}{2},
$$
and taking the logarithm of both sides of~\eqref{low-bound-entr-A-temple-2},
we recover the estimate~\eqref{low-bound-entr-A-temple}
for all $\varepsilon >0$ satisfying~\eqref{cond1-temple}.
\end{proof}

\subsection{Conclusion of the proofs of Theorem~\ref{mainthm}-(i) and Theorem~\ref{templethm}-(i)}

\bigskip

\noindent
{\it Proof of Theorem~\ref{mainthm}-(i).}
We shall provide a proof of the lower bound~\eqref{entrlowerbound}
for sets of functions of the form~\eqref{Cclass} with support contained in the interval $I_L\doteq [-L, L]$.
The case of sets of functions supported in any other given interval $I$ of length $|I|=2L$ can be recovered 
observing that every function in $S_T\big(\mathcal{L}_{[I,m,M]})$ is obtained by shifting 
horizontally a corresponding- function in $S_T\big(\mathcal{L}_{[I_L,m,M]})$ by a fixed constant.
Thus, the $\varepsilon$-entropy of the two
sets turns out to be the same.

Combining Proposition~\ref{controllability} and Proposition~\ref{Combinatory} we find
that, for every
\begin{equation}
\label{eps-bound}
0<\varepsilon\leq
\min\bigg\lbrace{\frac{L\, \overline d}{48 \alpha_4\, e^{\alpha_2/\alpha_3}}, \, 
\frac{L\,\Delta_{\wedge}\lambda}{96 \alpha_1\,\alpha_4 \, e^{\alpha_2/\alpha_3}}, \, 
\frac{L\, M}{48\alpha_4 \, e^{\alpha_2/\alpha_3}},\, \frac{N\, m}{48},\,
\frac{L\,N}{48 \alpha_6}
\bigg\rbrace}\cdot
\min \bigg\lbrace{\frac{1}{1+\alpha_5},\, \frac{T\!\cdot\! \Delta_{\wedge}\lambda}{L}\bigg\rbrace},
\end{equation}
($\alpha_l$, $l=2,\dots, 6$ and $\Delta_{\wedge}\lambda$ being the constants defined 
in~\eqref{alpha-1-2-def2}, \eqref{alfa4def}, \eqref{a5-def} and \eqref{T-assumption1}, respectively)
there holds
\begin{equation}
\label{H-est-final}
H_{\varepsilon}\Big(S_T\big(\mathcal{L}_{[I_L,m,M]}\cap{\mathcal{D}_0}\big)\ |\ L^1(\mathbb{R},\Omega)\Big)\geq 
\frac{{\widetilde L}^2N^2 b}{216\, \ln (2)}\cdot\frac{1}{\varepsilon},
\end{equation}
with
\begin{equation}
\label{def-Lt-b2}
\widetilde L = L\cdot \min\bigg\{\overline c_1, \overline c_2 \frac{T}{L}\bigg\},
\qquad\qquad 
b= \frac{1}{T}\cdot \frac{1}{\max\big\{\overline c_3,\, \overline c_4 \frac{N^2 L}{T},\, \overline c_5 \frac{N L}{\delta_0 T}\big\}},
\end{equation}
where
\begin{equation}
\label{c1234def}
\begin{gathered}
\overline c_1\doteq \frac{\overline c_2}{\overline c_2+\lambda_N(0)-\lambda_1(0)}
\qquad\quad\qquad \overline c_2\doteq
\displaystyle{\min_i}\big\{\lambda_{i+1}(0)-\lambda_i(0)\big\},
\\
\noalign{\bigskip}
\overline c_3\doteq 2\sup\big\{|\nabla\lambda_i(u)|\,; \ u\in B_{\overline d},\ i=1,\dots,N\,\big\}\,,
\qquad \overline c_5 \doteq 8 \sup \big\{ \big|l_i(0)\big| ; \ u\in B_{\overline d},\ i=1,\dots,N\, \big\}\,
\end{gathered}
\end{equation}
\begin{multline} \label{c1234defb}
\overline c_4\doteq \frac{  \,\overline c_5}{2 \, \overline c_2} \cdot
\Bigg(
\sup \bigg\{ \big|\lambda_k(u)-\lambda_j(u)\big| \big|l_i^T(u) Dr_k(u)\big|; \ u \in B_{\overline d},\ {i,j,k} \in \{ \, 1, \dots,N\,\big\} \bigg\} \\
+ \sup \big\{ \big|\nabla\lambda_i(u)\big| ; \ u\in B_{\overline d},\ i=1,\dots,N\, \big\}\, 
\Bigg).
\end{multline}
Thus, \eqref{H-est-final}-\eqref{c1234def}-\eqref{c1234defb} together yield~\eqref{entrlowerbound},
taking 
\begin{equation}
\label{c-def}
c_l = \overline c_l, \quad \text{for} \ l=1,2,\qquad\quad
c_l = 216\,\ln(2)\cdot\overline c_l, \quad  \text{for} \ l=3,4,5.
%c_1=\overline c_1, \qquad c_2=\overline c_2, \qquad c_3= 216\,\ln(2)\cdot \overline c_3, \qquad
%c_4= 216\ln(2)\cdot \overline c_4.
\end{equation}
\qed
\bigskip

\noindent
{\it Proof of Theorem~\ref{templethm}-(i).}
As for the proof of Theorem~\ref{mainthm}-(ii), it will be sufficient to establish the 
lower bound~\eqref{entrlowerbound-temple}
for sets of functions of the form~\eqref{Cclass-w} with support contained in the interval $I_L\doteq [-L, L]$.

Combining Proposition~\ref{controllability-temple} and Proposition~\ref{Combinatory-temple} we find
that, for every
\begin{equation}
\label{eps-bound2}
0<\varepsilon\leq
\min\bigg\lbrace{\frac{LN\,\overline d'}{12},\, \frac{L\sqrt N \,\Delta_{\wedge}\lambda}{24 \,\alpha_1''}, \, 
\frac{LN M}{12},\, \frac{N m}{24}
\bigg\rbrace},
\end{equation}
($\alpha''_1, \Delta_{\wedge}\lambda$ being the constants defined 
in~\eqref{alpha-2w-def1}, \eqref{T-assumption1}, respectively)
there holds
\begin{equation}
\label{H-est-final2}
H_{\varepsilon}\Big(S^w_T\big(\mathcal{L}^w_{[I_L,m,M]}\big)\ |\ L^1(\mathbb{R},\Pi)\Big)\geq 
\frac{{L}^2N^2 b}{108\, \ln (2)}\cdot\frac{1}{\varepsilon},
\end{equation}
with
\begin{equation}
\label{def-Lt-b3}
b= \frac{1}{T}\cdot \frac{1}{\max\big\{\overline c_6,\, \overline c_7 \frac{N L}{T}\big\}},
\end{equation}
where
\begin{equation}
\label{c1234def2}
\begin{gathered}
\overline c_6\doteq 2\sup\bigg\{\big|\nabla\lambda_i(u)\cdot r_i(u)\big|\ ; \ u\in B_{\overline d}, \ i=1,\dots,N\bigg\},
\\
\noalign{\medskip}
\overline c_7\doteq \frac{2}{\overline c_2}\cdot
\sup\bigg\{
\big|\nabla\lambda_i(u)\cdot r_j(u)\big|\ ; \ u\in B_{\overline d}, \ i,j=1,\dots,N\bigg\}\,.
\end{gathered}
\end{equation}
Thus, \eqref{H-est-final2}-\eqref{c1234def2} together yield~\eqref{entrlowerbound-temple},
taking 
\begin{equation}
\label{c-def2}
c_l = 108\,\ln(2)\cdot\overline c_l, \quad  \text{for} \ l=6,7.
\end{equation}

\qed
\bigskip

\section{Upper compactness estimates for genuinely nonlinear Temple systems}
\label{sec:upper-est-temple}
%
%We provide here a proof of Theorem~\ref{templethm}-(ii).
Assume that~\eqref{conlaws} is a strictly hyperbolic system of Temple class, 
and that all  characteristic families are genuinely nonlinear (cf. subsection~\ref{subsec-control}). Let $(S_t^w)_{t\geq 0}$
be the Riemann coordinate expression of the semigroup of entropy weak solutions generated by~\eqref{conlaws},
defined on a domain $L^1(\R,\Pi)$ with \linebreak $\Pi \doteq [a_1, b_1]\times\cdots\times [a_N,b_N]$.
In connection with a class of initial data $\mathcal{L}^w_{[I,m,M]}\subset L^1(\R,\Pi)$
as in~\eqref{Cclass-w},
consider the sets of $i$-th components of elements of $S^w_T\big(\mathcal{L}^w_{[I,m,M]}\big)$,
at a fixed time $T>0$:
\begin{equation}
\label{STi}
S^w_{T,i}\big(\mathcal{L}^w_{[I,m,M]}\big)
\doteq\big\lbrace{\varphi_i\ |\ (\varphi_1,\dots,\varphi_N)\in S^w_T(\mathcal{L}^w_{[I,m,M]})\big\rbrace},
\qquad i=1,\dots,N.
\end{equation}
Thanks to the Ole\v{\i}nik-type inequalities~\eqref{Olest},
we may establish an upper estimate on the $\varepsilon$-entropy 
for $S^w_{T,i}\big(\mathcal{L}^w_{[I,m,M]}\big)$
following 
the same strategy adopted in~\cite{DLG} for scalar conservation laws
with convex flux, 
relying  on the upper bound on the $\varepsilon$-entropy for classes of nondecreasing functions provided by:

\begin{lemma}$\mathrm{(\cite[Lemma~3.1]{DLG})}$
\label{LemDLG}
Given any, $L, M>0$, setting
\begin{equation}
\label{I-class-def}
{\mathcal I}_{[L,M]} \doteq \{ v : [0,L] \rightarrow [0,M] \ | \ v \text{ is nondecreasing}\ \},
\end{equation}
for $0< \varepsilon < \frac{LM}{6}$, there holds
\begin{equation}
\nonumber%\label{DLGest}
H_{\varepsilon} ( {\mathcal I}_{[L,M]} \ | \ L^{1}([0,L])) \leq \frac{4 LM}{\varepsilon} .
\end{equation}
\end{lemma}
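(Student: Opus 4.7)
The plan is to construct an explicit finite codebook of nondecreasing step functions whose $\varepsilon$-neighborhoods in $L^1$ cover $\mathcal{I}_{[L,M]}$, and then count that codebook. The natural candidates are step functions that are constant on a uniform partition of $[0,L]$ and take values on a uniform grid of $[0,M]$, and monotonicity will automatically restrict the admissible codewords to monotone sequences, whose combinatorics is tractable.

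Specifically, for an integer $n$ to be chosen, let $h\doteq L/n$ and partition $[0,L]$ into the closed subintervals $I_\ell\doteq [(\ell-1)h, \ell h]$, $\ell=1,\dots,n$, while discretizing the range $[0,M]$ by the grid $\{y_j\doteq jM/n : j=0,\dots,n\}$. To each $v\in\mathcal{I}_{[L,M]}$ I would associate the nondecreasing step function $\tilde v\doteq \sum_{\ell=1}^{n} y_{j_\ell}\chi_{I_\ell}$, where $j_\ell\in\{0,\dots,n\}$ is chosen so that $y_{j_\ell}$ approximates $v((\ell-1)h)$ to within $M/n$; monotonicity of $v$ then forces $j_1\leq j_2\leq\cdots\leq j_n$.

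Next I would estimate $\|v-\tilde v\|_{L^1}$ by splitting it into two contributions: the oscillation of $v$ within each subinterval, controlled by $\omega_\ell\, h$ with $\omega_\ell\doteq v(\ell h)-v((\ell-1)h)$, and the grid-rounding error, globally bounded by $(M/n)\cdot L$. Summing and using the telescoping identity $\sum_\ell \omega_\ell\leq M$ gives $\|v-\tilde v\|_{L^1}\leq 2ML/n$, which is at most $\varepsilon$ as soon as $n\geq 2ML/\varepsilon$. The number of admissible codewords equals the number of nondecreasing maps $\{1,\dots,n\}\to\{0,\dots,n\}$, namely $\binom{2n}{n}$, and the sharp bound $\binom{2n}{n}\leq 4^n$ then yields $H_\varepsilon\leq 2n$.

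The main technical obstacle is extracting exactly the constant $4$ rather than some slightly worse multiple: this requires the symmetric choice (equal discretization parameter in domain and range), the sharp binomial estimate $\binom{2n}{n}\leq 4^n$, and the hypothesis $0<\varepsilon<LM/6$, which ensures $LM/\varepsilon>6$ so that the additive rounding produced by taking $n=\lceil 2ML/\varepsilon\rceil$ is absorbed into the principal term $4LM/\varepsilon$. Once these constants line up, the claimed inequality $H_\varepsilon(\mathcal{I}_{[L,M]}\mid L^1([0,L]))\leq 4LM/\varepsilon$ follows directly from $H_\varepsilon=\log_2 N_\varepsilon\leq \log_2\binom{2n}{n}\leq 2n$.
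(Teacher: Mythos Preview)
The paper does not prove this lemma; it is quoted verbatim from De Lellis--Golse \cite{DLG}, so there is no in-paper argument to compare against. Your approach---discretize domain and range uniformly, approximate each monotone $v$ by a monotone step function on the resulting grid, and count monotone lattice paths---is exactly the standard one, and is essentially what De Lellis and Golse do. The approximation bound $\|v-\tilde v\|_{L^1}\le 2LM/n$ and the count $\binom{2n}{n}\le 4^n$ are both correct. One small imprecision: to guarantee $j_1\le\cdots\le j_n$ automatically you must round \emph{consistently} (e.g.\ always down, $j_\ell=\lfloor v((\ell-1)h)\,n/M\rfloor$); rounding to the nearest grid point can break monotonicity of the $j_\ell$.

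There is, however, a genuine (if minor) gap in your last paragraph. With $n=\lceil 2LM/\varepsilon\rceil$ you obtain
\[
H_\varepsilon \le 2n \le 2\Big(\tfrac{2LM}{\varepsilon}+1\Big)=\tfrac{4LM}{\varepsilon}+2,
\]
and no smallness hypothesis on $\varepsilon$ can make $4LM/\varepsilon+2\le 4LM/\varepsilon$; the ``absorption'' you invoke simply does not occur. What the hypothesis $\varepsilon<LM/6$ does buy is that $4LM/\varepsilon>24$, so the additive $+2$ is a relative error below $1/12$, i.e.\ you have proved $H_\varepsilon\le \tfrac{13}{3}\cdot LM/\varepsilon$, not the stated bound with constant $4$. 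For every application in the present paper this discrepancy is completely harmless (the constant feeds into~\eqref{entrupperbound-temple} where only the order $1/\varepsilon$ matters), but you should either state the bound you actually prove, or sharpen one of the two ingredients---for instance, take the range grid to have $n$ points rather than $n+1$, giving $\binom{2n-1}{n}=\tfrac12\binom{2n}{n}\le 2^{2n-1}$ and hence $H_\varepsilon\le 2n-1\le 4LM/\varepsilon+1$, which still leaves a $+1$; getting the clean constant $4$ requires a slightly more careful choice of grid and rounding than the symmetric one you describe.
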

\medskip
\noindent
In order to obtain an a-priori bound on size of the support of solutions to~\eqref{conlaws},
expressed in terms of the $L^1$-norm of their initial data, 
we will use the next technical
lemma derived in~\cite{AOK}.
\begin{lemma}$\mathrm{(\cite[Lemma~4.2]{AOK})}$
\label{LemEstSLLinf2}
Given $v \in \mathrm{BV}(\R)$, compactly supported and satisfying 
\begin{equation} \label{EqvCondSL}
% \!\!\restriction_{\mathrm{Supp}(v)}
D\, v \leq B \ \ \text{ in the sense of measures,}
\end{equation}
for some constant $B>0$, there holds
\begin{equation} \label{Eqestv2}
\| v \|_{\!\strut L^{\infty}} \leq \sqrt{2B \| v \|_{L^1}}.
\end{equation}
\end{lemma}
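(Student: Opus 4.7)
The plan is to convert the measure-theoretic bound $Dv \leq B$ into a one-sided pointwise estimate, use it to produce an explicit affine minorant of $|v|$ on a short interval anchored at a near-maximizer, and then integrate to extract $\|v\|_{L^1}$.

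First I would reformulate the hypothesis: since $Dv \leq B\,dx$ as Radon measures, the function $x \mapsto Bx - v(x)$ is nondecreasing (after passing to a left- or right-continuous representative, which exists pointwise for BV functions). Equivalently,
\begin{equation*}
v(y) - v(x) \leq B(y-x) \qquad \text{for all } x < y.
\end{equation*}
Set $M \doteq \|v\|_{L^{\infty}}$ and, for arbitrary $\delta \in (0,M)$, pick $x_0 \in \R$ with $|v(x_0)| \geq M-\delta$, working with the precise representative so that the inequality above applies pointwise at $x_0$.

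Next I would split into two cases according to the sign of $v(x_0)$. If $v(x_0) \geq M-\delta$, applying the one-sided bound with $y=x_0$ and $x \in [x_0 - (M-\delta)/B,\, x_0]$ yields
\begin{equation*}
v(x) \geq (M-\delta) - B(x_0 - x) \geq 0
\end{equation*}
on that interval, and integrating the triangular minorant gives $\|v\|_{L^1} \geq (M-\delta)^2/(2B)$. If instead $v(x_0) \leq -(M-\delta)$, the symmetric choice $x = x_0$ and $y \in [x_0,\, x_0 + (M-\delta)/B]$ forces $v(y) \leq -[(M-\delta) - B(y-x_0)] \leq 0$ there, and integrating $|v|$ produces the same lower bound. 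Compact support guarantees $v \in L^1(\R)$, so both sides are finite. Letting $\delta \to 0^+$ gives $M^2 \leq 2B\|v\|_{L^1}$, which is the claim.

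The only mild conceptual obstacle is that the hypothesis is a statement about measures while the argument wants a pointwise near-maximizer; this is resolved by the standard device of identifying $v$ with its precise (one-sided continuous) representative, so that one-sided limits control the essential supremum and the inequality $v(y)-v(x) \leq B(y-x)$ transfers directly. Beyond this, the derivation reduces to a single triangular integration and I do not anticipate any further difficulty.
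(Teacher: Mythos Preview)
Your argument is correct. Note, however, that the paper does not give its own proof of this lemma: it is quoted verbatim as \cite[Lemma~4.2]{AOK} and invoked as a black box in Section~\ref{sec:upper-est-temple}. Your triangular-minorant argument (using that $x\mapsto Bx-v(x)$ is nondecreasing to trap an affine lower bound for $|v|$ near a near-maximizer) is exactly the standard route and is essentially what the original reference does; there is nothing further to compare.
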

\bigskip

\noindent
{\it Proof of Theorem~\ref{templethm}-(ii).}
As for the proof of Theorem~\ref{templethm}-(i), it will be sufficient to establish the 
upper bound~\eqref{entrupperbound-temple}
for sets of functions of the form~\eqref{Cclass-w} with support contained in the interval $I_L\doteq [-L, L]$.
As stated in the introduction, we adopt the norms $\|w\|_{L^1}\doteq \sum_i \|w_i\|_{L^1}$,
$\|w\|_{L^\infty}\doteq \sup_i \|w_i\|_{L^\infty}$
on the space $L^1(\R, \Pi)$.
\medskip

\noindent
{\bf 1.} Given any initial data $\overline w\in \mathcal{L}^w_{[I_L,m,M]}$,
let $w(t,x)\doteq S^w_t \overline w (x)$ be the corresponding entropy
weak solution of~\eqref{conlaws} satisfying the Ole\v{\i}nik-type inequalities~\eqref{Olest}.
Observe that, by the properties of solutions of Temple systems
(cf.~\cite{brgoa1}), and because $\overline w\in \mathcal{L}^w_{[I_L,m,M]}$, for all $t\geq 0$, $i=1,\dots, N$, one has
\begin{equation}
\label{est-w-linf-l1-temple}
\|w_i(t,\cdot)\|_{L^\infty(\R,\Pi)}\leq \|\overline w_i\|_{L^\infty(\R,\Pi)}\leq M,
\qquad\quad
%\sum_i
\|w_i(t,\cdot)\|_{L^1(\R,\Pi)}\leq \|\overline w_i\|_{L^1(\R,\Pi)}\,.
%\leq m\,.
\end{equation}
On the other hand, notice that $w_i(t,\cdot)$ is compactly supported,
and that by virtue of~\eqref{Olest}, \eqref{est-w-linf-l1-temple},
one has $w_i(t,\cdot)\in \text{BV}(\R)$ for all $t>0$
and
\begin{equation}
\label{Ol-est2}
Dw_i(t,\cdot)\leq\frac{1}{c\,t}
\qquad\quad\forall~t>0, \quad i=1,\dots,N\,.
\end{equation}
Thus, invoking Lemma~\ref{LemEstSLLinf2}
and relying on~\eqref{est-w-linf-l1-temple}, \eqref{Ol-est2},
we derive
\begin{equation}
\label{est-w-l1-temple2}
%\begin{aligned}
\sum_i \|w_i(t,\cdot)\|_{L^\infty}
\leq \sum_i \sqrt{\frac{2 \|\overline w_i\|_{L^1}}{c\,t}}
%
%\noalign{\smallskip}
\leq \sqrt{\frac{2N}{c\,t}}\, \sqrt{\sum_i  \|\overline w_i\|_{L^1}}
\leq \sqrt{\frac{2 N m}{c\,t}}\,,
%\end{aligned}
\qquad\quad\forall~t>0\,.
\end{equation}
Moreover, applying the theory of generalized characteristics
 (see~\cite[Section 10.2]{Dafermos:Book}), letting $\xi_{(t,z)}^-(\cdot), \xi_{(t,z)}^+(\cdot)$ 
 denote the minimal and maximal backward characteristics emanating from $(t,z)$, 
 and setting
 \begin{equation}
 \label{lminplus}
 l^-(t)\doteq\inf\big\{z\,\big|\ \xi_{(t,z)}^+(0)\geq -L\big\}\,,
 \qquad\qquad
 l^+(t)\doteq\sup\big\{z\,\big|\ \xi_{(t,z)}^-(0)\leq L\big\}\,,
 \end{equation}
 we find
\begin{equation}
\label{supp-wi-temple}
\mathrm{Supp}(w_i(t,\cdot))\subset [l_i^-(t),\, l_i^+(t)]\,,
\end{equation}
for all $t\geq 0$, $i=1,\dots, N$. Then, recalling that the minimal  backward characteristic
$\xi_{(t,z)}^-(\cdot)$ is a solution of 
\begin{equation}
\dot \xi (s) = \lambda_i\big(w(s,\xi (s)-)\big)\quad \text{a.e.}\quad s\in[0,t]\,,
\end{equation}
setting 
\begin{equation}
\label{aij-def}
\alpha_{i,j}\doteq \sup\big\{
|\nabla \lambda_i(u)\cdot r_j(u)|\ ;\,  |W(u)|\leq M \big\},
\end{equation}
and relying on~\eqref{est-w-l1-temple2}, we derive 
\begin{equation}
\label{supp-wi-temple2}
\begin{aligned}
l_i^+(t) &\leq L+\lambda_i(0) \cdot t + \sum_j \alpha_{i,j}\int_0^t \|w_j(s,\cdot)\|_{L^\infty}~ds
\\
\noalign{\smallskip}
&\leq L+\lambda_i(0) \cdot t + \sup_j \alpha_{i,j}\int_0^t \sum_j  \|w_j(s,\cdot)\|_{L^\infty}~ds
%4\,\sqrt{\frac{\sum_j \|\overline w_j\|_{L^1}}{c\,s}}~ds
\\
\noalign{\smallskip}
&\leq L+\lambda_i(0) \cdot t + \sup_j \alpha_{i,j}\sqrt{\frac{2 N m}{c}}\int_0^t \frac{1}{\sqrt s}~ds
\\
\noalign{\smallskip}
&\leq L+\lambda_i(0) \cdot t + \sup_j \alpha_{i,j}\sqrt{\frac{8N m \,t}{c}}\,,
\end{aligned}
\end{equation}
for all $t\geq 0$, $i=1,\dots, N$. 
Analogously, observing that the maximal backward characteristic
$\xi_{(t,z)}^+(\cdot)$ is a solution of
\begin{equation}
\qquad\qquad \dot \xi (s) = \lambda_i\big(w(s,\xi (s)+)\big)\qquad \text{a.e.}\quad s\in[0,t]\,,
\end{equation}
with the same arguments above we derive
\begin{equation}
\label{supp-wi-temple3}
l_i^-(t) \geq -L+\lambda_i(0) \cdot t - \sup_j \alpha_{i,j}\sqrt{\frac{8N m \,t}{c}}
\qquad\forall~t\geq 0, \quad i=1,\dots,N\,,
\end{equation}
which, together with~\eqref{supp-wi-temple}, \eqref{supp-wi-temple2}, yields
% the following bound on the length  of the support of $w_i(t,\cdot)$:
%
\begin{equation}
\label{supp-wi-temple4}
%\big|\mathrm{Supp}(w_i(t,\cdot))\big|\leq  2 L_t\,,
\mathrm{Supp}(w_i(t,\cdot))\subset [-L_t+\lambda_i(0) \cdot t,\ L_t+\lambda_i(0) \cdot t]\,,
\qquad L_t\doteq \bigg(L+\sup_{i,j} \alpha_{i,j}\sqrt{\frac{8N m \,t}{c}}\ \bigg)\,,
%\qquad\forall~t\geq 0, \quad i=1,\dots,N\,.
\end{equation}
for all $t\geq 0$, $i=1,\dots, N$. 
Finally, observing that by~\eqref{c-def-temple}, \eqref{aij-def} we have $c\leq \sup_{i,j} \alpha_{i,j}$,
and combining \eqref{est-w-l1-temple2}  with~\eqref{supp-wi-temple4}, we find
\begin{equation}
\label{est-w-linf-temple3}
\|w_i(t,\cdot)\|_{L^\infty}\leq \frac{L_t}{c\, t}
\qquad\forall~t> 0, \quad i=1,\dots,N\,.
\end{equation}
\medskip

\noindent
{\bf 2.} In connection with any given 
%$w_T\doteq S^w_T \overline w$, $\overline w\in \mathcal{L}^w_{[I_L,m,M]}$
$\psi\in S^w_T\big(\mathcal{L}^w_{[I_L,m,M]}\big)$, consider the function $\varphi_i^\natural : [0, \, 2L_T]\to \R$
defined by setting
\begin{equation}
\varphi_i^\natural(x)\doteq \frac{x}{c\,T}-\psi\big(x+\lambda_i(0) \cdot T- L_T\big)+\frac{L_T}{c\, T},
%\qquad\quad x\in [0, \, 2L_T]\,.
\end{equation}
with $L_T$ as in~\eqref{supp-wi-temple3}. Notice that, by virtue of~\eqref{Ol-est2}, 
$\varphi_i^\natural$ is nondecreasing and, thanks to~\eqref{est-w-linf-temple3}, one has
\begin{equation}
\label{psi-nat-est}
0\leq \varphi_i^\natural(x)\leq \frac{4 L_T}{c\,T}\qquad\quad\forall~x\in  [0, \, 2L_T]\,.
\end{equation}
Hence, recalling the definition~\eqref{I-class-def}, we have
\begin{equation*}
\varphi_i^\natural\in \mathcal{I}_{[2L_T,\, \frac{4 L_T}{c\,T}]}\,.
\end{equation*}
Finally, observe that since $\varphi_i^\natural$ is obtained from $\varphi_i$
%the restriction of $\varphi_i$ to $[-L_T+\lambda_i(0) \cdot t,\ L_T+\lambda_i(0) \cdot T]$
by a change of sign, a translation by a fixed function, and a shift of a
fixed constant, it follows that, setting 
\begin{equation*}
\mathcal{U}_i^\natural\doteq \big\{\varphi_i^\natural\ \big| \ \varphi\in S^w_T\big(\mathcal{L}^w_{[I_L,m,M]}\big)\big\}\,,
\end{equation*}
recalling~\eqref{STi}, there holds
\begin{equation}
\label{entr-eq-restr}
\begin{aligned}
N_{\varepsilon}\Big(S^w_{T,i}\big(\mathcal{L}^w_{[I,m,M]}\big)\ |\ L^1([-L_T+\lambda_i(0) \cdot T,\ L_T+\lambda_i(0) \cdot T])\Big)
&=N_{\varepsilon}\Big(\mathcal{U}_i^\natural \ |\ L^1([0, \, 2L_T])\Big)
\\
\noalign{\smallskip}
&\leq  N_{\varepsilon}\Big(\mathcal{I}_{[2L_T,\, \frac{4 L_T}{c\,T}]} \ |\ L^1([0, \, 2L_T])\Big)\,.
\end{aligned}
\end{equation}
On the other hand, by virtue of~\eqref{supp-wi-temple3}, one has
\begin{equation}
\label{entr-eq-nat}
N_{\varepsilon}\Big(S^w_{T,i}\big(\mathcal{L}^w_{[I,m,M]}\big)\ |\ L^1(\mathbb{R})\Big)=
N_{\varepsilon}\Big(S^w_{T,i}\big(\mathcal{L}^w_{[I,m,M]}\big)\ |\ L^1([-L_T+\lambda_i(0) \cdot T,\ L_T+\lambda_i(0) \cdot T])\Big).
\end{equation}
Thus, applying Lemma~\ref{LemDLG}, and relying on~\eqref{entr-eq-restr}, \eqref{entr-eq-nat}, we find
\begin{equation}
N_{\varepsilon}\Big(S^w_{T,i}\big(\mathcal{L}^w_{[I,m,M]}\big)\ |\ L^1(\mathbb{R})\Big)\leq 
2^{\frac{32 L_T^2}{c\,T}\cdot \frac{1}{\varepsilon}}
\qquad\quad\forall~i=1,\dots,N,
\end{equation}
which, in turn, yield
\begin{equation}
N_{\varepsilon}\Big(S^w_T\big(\mathcal{L}^w_{[I,m,M]}\big)\ |\ L^1(\mathbb{R},\Pi)\Big)\leq
\prod_{i=1}^N N_{\!\frac{\varepsilon}{N}} N_{\varepsilon}\Big(S^w_{T,i}\big(\mathcal{L}^w_{[I,m,M]}\big)\ |\ L^1(\mathbb{R})\Big)
\leq 2^{\frac{32 N^2 L_T^2}{c\,T}\cdot \frac{1}{\varepsilon}},
\end{equation}
proving the upper bound~\eqref{entrupperbound-temple}.
\qed
\bigskip
\section*{Acknowledgements}
FA and KTN are partially supported by the European Union
Seventh Framework Programme [FP7-PEOPLE-2010-ITN] under grant agreement n.264735-SADCO,
by the ERC Starting Grant 2009 n.240385 ConLaws, and by 
Fondazione CaRiPaRo Project "Nonlinear Partial Differential Equations: models, analysis, and control-theoretic problems".
\bigskip

\end{document}